\documentclass[12pt,a4paper]{article}
\usepackage{mathtools,amsthm,amsmath,amsfonts,amssymb,tikz-cd,enumitem, graphicx,mathrsfs,bbm,appendix}

\tikzcdset{scale cd/.style={every label/.append style={scale=#1},
    cells={nodes={scale=#1}}}}
\usepackage{authblk}
\usepackage{url}
\usepackage{esvect}
\makeatletter
\newcommand{\address}[1]{\gdef\@address{#1}}
\newcommand{\email}[1]{\gdef\@email{\url{#1}}}
\newcommand{\@endstuff}{\par\vspace{\baselineskip}\noindent\small
\begin{tabular}{@{}l}\@address\\\textit{E-mail address:} \@email\end{tabular}}
\AtEndDocument{\@endstuff}
\makeatother
\usepackage[utf8]{inputenc}
\usepackage{CJKutf8}
\counterwithin{equation}{section}
\usepackage[pdfencoding=auto,psdextra,colorlinks=true,linkcolor=blue,citecolor=blue,urlcolor = orange]{hyperref}
\usepackage[margin = 1.25in]{geometry}
\usepackage{kantlipsum}
\allowdisplaybreaks
\newtheorem{theorem}{Theorem}[section]
\newtheorem{definition}[theorem]{Definition}

\newtheorem{lemma}[theorem]{Lemma}
\newtheorem{question}[theorem]{Question}
\newtheorem{remark}[theorem]{Remark}
\newtheorem{proposition}[theorem]{Proposition}
\newtheorem{corollary}[theorem]{Corollary}
\newtheorem{assumption}[theorem]{Assumption}

\newcommand{\citep}[1]{\cite{#1}}
\newcommand{\grad}{{\rm grad}}

\newcommand{\dvol}{\text{\normalfont dvol}}

\newcommand{\image}{\text{\normalfont Im}}

\newcommand{\supp}{\text{\normalfont supp}}

\newcommand{\cupc}{\mathcal{C}}
\newcommand{\coker}{\mathrm{coker}}
\begin{document}
%\onehalfspacing
\title{\textbf{Instanton construction of the mapping cone Thom-Smale complex}}
\author{Hao Zhuang}
\address{Beijing International Center for Mathematical Research, Peking University\\ Beijing, China}
\email{hzhuang@pku.edu.cn}
\date{\today}
\maketitle
\begin{abstract}
   The wedge by a smooth closed $\ell$-form induces the mapping cone de Rham cochain complex. This complex is quasi-isomorphic to the mapping cone Thom-Smale cochain complex. In this paper, we give a purely analytic instanton construction of the mapping cone Thom-Smale complex. More precisely, for a Morse function with the transversality condition on a closed oriented Riemannian manifold, we construct an instanton cochain complex using the eigenspaces of the mapping cone Laplacian deformed by the Morse function and two parameters. One parameter is inherited from the classical Witten deformation. The other parameter points to the cup product issue affecting the mapping cone situation. As the main result, we prove that our instanton complex is cochain isomorphic to the topologically constructed mapping cone Thom-Smale complex. 
\end{abstract} 
%\keywords{Mapping cone, Thom-Smale complex, Witten deformation}
%\vspace{+1mm}

%\noindent\subjectclass{58J20, 37D15 (Primary); 81Q60, 57R58 (Secondary)}
\tableofcontents
\section{Introduction}
The mapping cone de Rham complex induced by the wedge of a closed form has witnessed many interesting studies in recent years. In general, given a smooth closed $\ell$-form $\omega$ on a closed smooth manifold $M$, the mapping cone de Rham complex is\footnote{We allow $k = -1$ to include the case of $\ell = 0$, i.e., the case where $\omega$ is a constant. In this paper, $\Omega^k(M) = 0$ when $k<0$ or $k>m$.}
\begin{align}\label{wedging to mapping cone}
    d^\omega: \Omega^k(M)\oplus \Omega^{k-\ell+1}(M) &\to \Omega^{k+1}(M)\oplus \Omega^{k-\ell+2}(M)\ \ (-1\leqslant k\leqslant m+\ell-1) \nonumber\\
    (\alpha, \beta)& \mapsto (d\alpha+\omega\wedge\beta, (-1)^{\ell-1}d\beta).
\end{align}
By the interaction between analytic and topological tools, we see that extra information arises from $\omega\wedge$ when we transfer current geometry and topology from the original de Rham cochain complex to (\ref{wedging to mapping cone}). A well-known result is that when $\omega$ is the $n$-th power of a symplectic form, the mapping cone de Rham complex computes the $n$-filtered cohomology \cite{tanaka_tseng_2018, tty3rd, tty1st, tty2nd} of $M$, which is given by the Lefschetz decomposition of $\Omega^k(M)$ and depends on the symplectic structure.  

In this paper, we study the mapping cone Morse theory started by Clausen, Tang, and Tseng \cite{clausen_tang_tseng_2024_mappingconemorsetheory, tangtsengclausensymplecticwitten}. For any Morse-Smale pair $(f,g)$ of a function $f$ and a metric $g$ on $M$, Clausen, Tang, and Tseng constructed the mapping cone Thom-Smale cochain complex in the topological way \cite{clausen_tang_tseng_2024_mappingconemorsetheory} as follows. We let $C^k(f,g)$ be the linear space spanned by all index-$k$ critical points of $f$, let 
\begin{align}
    \partial: C^k(f,g)\to C^{k+1}(f,g)
\end{align}
be the associated classical Thom-Smale cochain complex, and let 
\begin{align}
    \mathcal{C}(\omega): C^k(f,g)\to C^{k+\ell}(f,g)
\end{align}
be the cup product of $\omega$ on the Thom-Smale cochain complex. Then, the map
\begin{align}\label{introduce the mapping cone thom-smale cochain complex}
\begin{split}
    \partial^\omega: C^k(f, g)\oplus C^{k-\ell+1}(f, g) &\to C^{k+1}(f, g)\oplus C^{k-\ell+2}(f, g)\\
    \begin{bmatrix}
            a\\
            b
        \end{bmatrix}& \mapsto \begin{bmatrix}
            \partial & \mathcal{C}(\omega)\\
            0 & (-1)^{\ell-1}\partial
        \end{bmatrix}\begin{bmatrix}
            a\\
            b
        \end{bmatrix}
\end{split}
\end{align}
defines the mapping cone Thom-Smale cochain complex. By \cite[Theorem 1.2]{clausen_tang_tseng_2024_mappingconemorsetheory}, the complex (\ref{introduce the mapping cone thom-smale cochain complex}) is quasi-isomorphic to (\ref{wedging to mapping cone}). 

Meanwhile in \cite{tangtsengclausensymplecticwitten}, Clausen, Tang, and Tseng proposed another construction using the deformed de Rham exterior derivative and an ``$\omega\wedge$ type map'' on the classical Witten instanton complex (See \cite{Bismut_Zhang_1992_theorem_Cheeger_Muller, Bismut1994, Helffer01011985, witten,  wittendeformationweipingzhang} for the Witten instanton complex of a Morse-Smale pair). It is here that we truly encounter the serious issue brought by the wedge $\omega\wedge$. 
We let $d_T = d+Tdf$, and $F_T^k(f,g)$ be the sum of eigenspaces of the Hodge-Witten Laplacian 
\begin{align}
    (d_T+d_T^*)^2: \Omega^k(M)\to\Omega^k(M)
\end{align}
associated with eigenvalues in $[0,1]$. Recall the classical Witten instanton cochain complex 
\begin{align}
    d_T: F_T^k(f,g) \to F_T^{k+1}(f,g).
\end{align}
When $T$ is sufficiently large, there is a cochain isomorphism 
\begin{align}\label{auxiliary isomorphism on the chain level}
    \Phi_T: (F_T^\bullet(f,g), d_T)\cong (C^\bullet(f,g), \partial).
\end{align}
 We could expect the map 
\begin{align}\label{tempting attempt}
   \begin{split}
       F_T^k(f,g)\oplus F_T^{k-\ell+1}(f,g)&\to F_T^{k+1}(f,g)\oplus F_T^{k-\ell+2}(f,g)\\
    \begin{bmatrix}
        \alpha\\
        \beta
    \end{bmatrix} &\mapsto \begin{bmatrix}
        d_T & \omega\\
        0 & (-1)^{\ell-1}d_T
    \end{bmatrix}\begin{bmatrix}
        \alpha\\
        \beta
    \end{bmatrix}
   \end{split}
\end{align}
to give a purely analytic instanton construction of the mapping cone Thom-Smale complex. However, the wedge $\omega\wedge$ does not preserve the space $F_T^{k}(f,g)$, and therefore (\ref{tempting attempt}) does not define a cochain complex. The construction in \cite{tangtsengclausensymplecticwitten} is replacing $\omega$ in (\ref{tempting attempt}) by the ``$\omega\wedge$ type map'' 
\begin{align}\label{the map is not trivial}
    \Phi_T^{-1}\circ \mathcal{C}(\omega)\circ \Phi_T.
\end{align} 
This construction is hybrid analytic-topological. Comparing with the patterns in the previous analytic studies \cite{dai_yan_2021, Helffer01011985, lu2017thom, witten, wittendeformationweipingzhang} on the Thom-Smale complex, and also for further discussions on topics like the mapping cone version of the analytic torsion \cite{Bismut_Zhang_1992_theorem_Cheeger_Muller, Bismut1994}, we are motivated to study this question: 
 
\begin{question}
\normalfont
    For a Morse-Smale pair on a smooth closed oriented manifold, how do we give a more canonical purely analytic instanton construction of its mapping cone Thom-Smale complex, i.e., only using the eigenspaces of a ``Laplacian'' without the auxiliary map $\Phi_T$?
\end{question}

In this paper, we give an answer to this question. We assume:  
\begin{assumption}\label{basic assumption}
\normalfont
	$M$ is an $m$-dimensional closed oriented manifold, $\omega$ is a closed smooth $\ell$-form on $M$, $f$ is a Morse function on $M$, and $g$ is a Riemannian metric on $M$ such that $(f,g)$ is a Morse-Smale pair.
\end{assumption}

Before studying the mapping cone situation, we quickly review the classical Thom-Smale cohomology that we have mentioned and set up notations. 

First, we let $Z(df)$ be the set of critical points of $f$, $Z^k(df)$ be the set of those with Morse index $k$, and $C^k(f, g)$ be the real vector space spanned by $Z^k(df)$. 

 Second, each $p\in Z(df)$ has the unstable manifold $\mathcal{U}(p)$ and the stable manifold $\mathcal{S}(p)$ with respect to the negative gradient of $f$ under $g$. 
 For any $p,q\in Z(df)$, $\mathcal{U}(q)$ intersects $\mathcal{S}(p)$ transversely. We let
 \begin{align}
     \mathcal{M}(q,p) = \mathcal{U}(q)\cap \mathcal{S}(p), 
 \end{align}
and let $\overline{\mathcal{M}(q,p)}$ be the closure of $\mathcal{M}(q,p)$. For each $q\in Z(df)$, we fix an orientation on $\mathcal{U}(q)$. Then, we obtain the orientation on $\mathcal{M}(q,p)$ and that on $\mathcal{M}(q,p)/\mathbb{R}$ according to the rule \cite[Section 2.2, (3)]{hutchingslecturenotes}.

Third, we let $\grad(f)$ be the gradient of $f$ with respect to $g$, and 
\begin{align}\label{classical thom smale cochain complex}
    \partial: C^k(f, g)\to C^{k+1}(f, g)
\end{align}
be the coboundary map of the classical Thom-Smale cochain complex. This $\partial$ is defined by counting (with signs) the flow lines of $-\grad(f)$ between critical points. 

Fourth, following \cite[(2.1)]{Austin1995}, \cite[Theorem 1]{Viterbo1995}, and \cite[(6)]{clausen_tang_tseng_2024_mappingconemorsetheory}, we define
    \begin{align}\label{cup product map on chain elements}
   \begin{split}
   	\mathcal{C}(\omega): C^k(f, g)&\to C^{k+\ell}(f, g)\\
    p\in Z^k(df)&\mapsto \sum_{q\in Z^{k+\ell}(df)}\left(\int_{\overline{\mathcal{M}(q,\hspace{+0.375mm} p)}} \omega\right)q.
   	\end{split}
   \end{align}
   This $\mathcal{C}(\omega)$ is the cup product map given by $\omega$. 
Then, we have the mapping cone Thom-Smale complex \cite[Definition 1.1]{clausen_tang_tseng_2024_mappingconemorsetheory}: 
\begin{definition}\normalfont
    We call the cochain complex given by
\begin{align}\label{topological definition}
\begin{split}
    \partial^\omega: C^k(f, g)\oplus C^{k-\ell+1}(f, g) &\to C^{k+1}(f, g)\oplus C^{k-\ell+2}(f, g)\\
    \begin{bmatrix}
            a\\
            b
        \end{bmatrix}& \mapsto \begin{bmatrix}
            \partial & \mathcal{C}(\omega)\\
            0 & (-1)^{\ell-1}\partial
        \end{bmatrix}\begin{bmatrix}
            a\\
            b
        \end{bmatrix}
\end{split}
\end{align}
 the mapping cone Thom-Smale complex of $(f, g)$. It satisfies $\partial^\omega\circ\partial^\omega = 0.$
\end{definition}

  Next, we construct our purely analytic side. 
  
  For any $S>0$ and $T\geqslant 0$, we see that the map (cf. \cite[Remark A.3]{tanaka_tseng_2018} and \cite[Proposition 5.3]{wittendeformationweipingzhang}) 
   \begin{align}\label{perturbed deformed mapping cone de rham complex}
   \begin{split}
       d^\omega_{ST}: \Omega^k(M)\oplus\Omega^{k-\ell+1}(M)&\to\Omega^{k+1}(M)\oplus\Omega^{k-\ell+2}(M)\\
    \begin{bmatrix}
        \alpha\\
        \beta
    \end{bmatrix} &\mapsto \begin{bmatrix}
        d+Tdf & S^{-1}\omega\\
        0 & (-1)^{\ell-1}(d+Tdf)
    \end{bmatrix}\begin{bmatrix}
        \alpha\\
        \beta
    \end{bmatrix}
   \end{split}
\end{align}
satisfies $d_{ST}^\omega\circ d_{ST}^\omega = 0$ and thus defines a cochain complex. It is a Witten type deformation of (\ref{wedging to mapping cone}). We will explain the necessity of the parameter $S$ later. 

\begin{remark}\normalfont
    By Proposition \ref{routinely check the deformation does not change the cohomology}, (\ref{perturbed deformed mapping cone de rham complex}) and (\ref{wedging to mapping cone}) are cochain isomorphic to each other. Then, the topological result 
\cite[Theorem 1.2]{clausen_tang_tseng_2024_mappingconemorsetheory} shows that (\ref{perturbed deformed mapping cone de rham complex}) is quasi-isomorphic to the mapping cone Thom-Smale complex (\ref{topological definition}). However, this quasi-isomorphism is not the final goal of the analytic study. 
\end{remark}

The metric $g$ induces an $L^2$ norm $\|\cdot\|$ on $\Omega^k(M)$. This norm and its associated inner product naturally extends to the direct sum 
\begin{align}
    \bigoplus_{k = -1}^{m+\ell-1}\left(\Omega^k(M)\oplus\Omega^{k-\ell+1}(M)\right) \cong \left(\bigoplus_{k = -1}^{m+\ell-1}\Omega^k(M)\right)\oplus\left(\bigoplus_{k = -1}^{m+\ell-1}\Omega^{k-\ell+1}(M)\right). 
\end{align}
Then, we have the formal adjoint ${d_{ST}^\omega}^*$ of $d_{ST}^\omega$. 
Our purely analytic instanton complex of $(f,g,\omega)$ is defined as follows:
   
   \begin{definition}\label{define the mapping cone witten instanton}
   \normalfont
   For $S>0$ and $T\geqslant 0$, we let $\mathbb{D}_{ST} = d^\omega_{ST} + {d_{ST}^\omega}^*$, and 
   	\begin{align}\label{definition of fstkfgomega}
	F_{ST}^k(f,g,\omega)=\bigoplus_{0\leqslant\lambda\leqslant 1}\left\{\mathbf{w}\in\Omega^k(M)\oplus\Omega^{k-\ell+1}(M): \mathbb{D}_{ST}^2\mathbf{w} = \lambda\mathbf{w}\right\}.
    \end{align}
    We call the cochain complex given by 
    \begin{align}\label{analytic definition}
    d_{ST}^\omega: F_{ST}^k(f,g,\omega)\to F_{ST}^{k+1}(f,g,\omega)\ \ (-1\leqslant k\leqslant m+\ell-1)
    \end{align}
    the instanton cochain complex of $(f,g,\omega)$ with parameters $S$ and $T$. 
   \end{definition}
   
\begin{remark}
    \normalfont Comparing with \cite[(2.9)]{tangtsengclausensymplecticwitten}, Definition \ref{define the mapping cone witten instanton} is purely analytic. It does not use the auxiliary map (\ref{auxiliary isomorphism on the chain level}). 
\end{remark}

\begin{remark}
\normalfont
    Even when $S=1$, the space (\ref{definition of fstkfgomega}) is not equal to $F_T^{k}(f,g)\oplus F_T^{k-\ell+1}(f,g)$ in (\ref{tempting attempt}). In fact, later our main result reveals that if $T$ is sufficiently large, and $S$ is exponentially larger than $T$, we have $F_T^{k}(f,g)\oplus F_T^{k-\ell+1}(f,g)\cong F_{ST}^k(f,g,\omega)$. 
\end{remark}

   By \cite[Proposition 5.1]{Helffer01011985}, we can perturb our Morse-Smale pair $(f,g)$ so that: 
   \begin{enumerate}[label = (a\arabic*)]
       \item The critical set $Z(df)$ and the index of each $p\in Z(df)$ are unchanged. \label{perturbation 1}
       \item For each $p\in Z^k(df)$, there is a local chart $U = (x_1,\cdots,x_m)$ centered at $p$ such that these $U$'s are disjoint, and \label{perturbation 2}
       \begin{align}
       f|_U =\ & f(p)-\dfrac{1}{2}(x_1^2+\cdots+x_k^2) + \dfrac{1}{2}(x_{k+1}^2+\cdots+x_m^2), \label{morse lemma} \\
       g|_U =\ & dx_1^2+\cdots + dx_m^2. \label{locally euclidean by morse lemma}
   \end{align}
   \item $(f,g)$ is still a Morse-Smale pair. \label{perturbation 3}
   \end{enumerate}
   
   As the main result, more than a quasi-isomorphism, we give a cochain isomorphism: 
   \begin{theorem}\label{main theorem}
   	We perturb $(f, g)$ subject to {\normalfont \ref{perturbation 1}, \ref{perturbation 2}, and \ref{perturbation 3}}. Then, we have constants $C_0>1$ and $T_0>0$ such that when $S > e^{C_0T} > e^{C_0T_0}$, there is a cochain isomorphism between the instanton complex {\normalfont(\ref{analytic definition})} of $(f,g,\omega)$
    and the mapping cone Thom-Smale complex {\normalfont(\ref{topological definition})} of $(f,g)$.
   \end{theorem}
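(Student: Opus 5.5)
We will treat $\mathbb{D}_{ST}$ as a perturbation of the diagonal operator $D_T\oplus D_T$, where $D_T=d_T+d_T^*$. Writing
\begin{align*}
\mathbb{D}_{ST}=\begin{bmatrix} D_T & 0\\ 0 & D_T'\end{bmatrix}+S^{-1}\begin{bmatrix}0 & \omega\wedge\\ (\omega\wedge)^* & 0\end{bmatrix},
\end{align*}
with $D_T'=(-1)^{\ell-1}d_T+(-1)^{\ell-1}d_T^*$ (so $D_T'^2=D_T^2$), the perturbation has operator norm at most $S^{-1}\|\omega\|_{C^0}$. By the classical Helffer--Sj\"ostrand/Bismut--Zhang analysis, for $T\geqslant T_0$ the spectrum of $D_T^2$ splits: there are $\#Z^k(df)$ eigenvalues in $[0,e^{-cT}]$ and the rest lie in $[C_1T,\infty)$. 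Here we use the local models \ref{perturbation 1}, \ref{perturbation 2}, \ref{perturbation 3}. Since $\mathbb{D}_{ST}$ is a bounded self-adjoint perturbation of a self-adjoint operator, min--max gives eigenvalue shifts of $\mathbb{D}_{ST}$ of at most $S^{-1}\|\omega\|_{C^0}$. If $S>e^{C_0T}$, this is far smaller than $e^{-cT/2}$, so the eigenvalues of $\mathbb{D}_{ST}^2$ still split into small ones (of size $O(e^{-cT})$) and large ones ($\geqslant C_1T-1>1$). Hence $\dim F^k_{ST}(f,g,\omega)=\#Z^k(df)+\#Z^{k-\ell+1}(df)$. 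Moreover, the spectral projection $P_{ST}$ onto $F_{ST}$ satisfies $\|P_{ST}-P_T\oplus P_T\|=O(S^{-1})$, by the resolvent contour formula on a circle of radius $1/2$ around $0$. Because $d^\omega_{ST}$ commutes with $\mathbb{D}_{ST}^2$, the spaces $F_{ST}^\bullet$ form a subcomplex.

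Next we build the isomorphism explicitly. Let $\Phi_T:F_T^\bullet\to C^\bullet(f,g)$ be the classical cochain isomorphism. Following Bismut--Zhang and Helffer--Sj\"ostrand, we take it to be $\alpha\mapsto\sum_p\big(\int_{\overline{\mathcal U(p)}}e^{Tf}\alpha\big)p$ up to normalisation, which intertwines $d_T$ with $\partial$ by Stokes' theorem. For $\mathbf w=(\alpha,\beta)\in F^k_{ST}$ we set
\begin{align*}
\Psi_{ST}(\mathbf w)=\Big(\sum_{p\in Z^k}\Big(\int_{\overline{\mathcal U(p)}}e^{Tf}\alpha\Big)p,\ (-1)^{\epsilon}S\sum_{q\in Z^{k-\ell+1}}\Big(\int_{\overline{\mathcal U(q)}}e^{Tf}\beta\Big)q\Big),
\end{align*}
with appropriate signs $\epsilon$. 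The rescaling by $S$ on the second component converts $S^{-1}\omega$ into $\omega$, since $d^\omega_{ST}$ is conjugate to $d^\omega_{1,T}$ via $(\alpha,\beta)\mapsto(\alpha,S^{-1}\beta)$. We then check the chain-map property with Stokes' theorem on the compactified unstable manifolds. The first component of $\Psi(d^\omega_{ST}\mathbf w)$ is $\int_{\overline{\mathcal U(p)}}e^{Tf}(d_T\alpha+S^{-1}\omega\wedge\beta)$. The $d_T$ term gives $\partial$ applied to the first component. The $\omega\wedge\beta$ term must reproduce $\mathcal C(\omega)$ applied to the second component, that is,
\begin{align*}
\int_{\overline{\mathcal U(p)}}e^{Tf}\omega\wedge\beta\;=\;\sum_q\Big(\int_{\overline{\mathcal M(p,q)}}\omega\Big)\int_{\overline{\mathcal U(q)}}e^{Tf}\beta.
\end{align*}
This identity holds only up to $d_T$-exact corrections, so it is not literally true.

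This identity is the main obstacle. We would handle it by not insisting that $\Psi$ be exactly the naive integration map. Instead we will show that the naive map $\Psi$ is a cochain map up to an error, then correct it. Concretely, we will use that $\beta$ is, up to $O(e^{-cT})$ in $C^0$ away from the critical points, a combination of forms concentrated near critical points, by the Agmon/Helffer--Sj\"ostrand estimates. Then $e^{Tf}\beta$ is a de Rham current close to $\sum_q b_q[\overline{\mathcal S(q)}]$-type currents (Laudenbach's description). With this, the wedge integral over $\overline{\mathcal U(p)}$ becomes the intersection count $\int_{\overline{\mathcal M(p,q)}}\omega$ plus errors of size $O(e^{-cT}\cdot\text{poly})$. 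Because both complexes have integer-like rigid structure but real coefficients, an approximate chain map is not yet enough. So we define the final isomorphism as $\Psi\circ$ (a homotopy correction), or more cleanly as follows. We use the filtration by $\beta$-degree: the map $\mathbf w\mapsto$ second component is an exact cochain map $F_{ST}\to F_{T}[\ell-1]$ up to projection, and the first component then induces a map on the cone. By the five lemma on the short exact sequences $0\to F_T^\bullet\to F_{ST}^\bullet\to F_T^{\bullet-\ell+1}\to0$ and $0\to C^\bullet\to C^\bullet\oplus C^{\bullet-\ell+1}\to C^{\bullet-\ell+1}\to0$, the resulting map is an isomorphism. This comes down to identifying the connecting map, i.e.\ the extension class, with $\mathcal C(\omega)$. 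To get an isomorphism on the nose we choose the lift of $\beta$ via $P_{ST}$. The resulting off-diagonal block $P_T\,\omega\,P_T$ on $F_T$ corresponds under $\Phi_T$ to a matrix that converges to $\mathcal C(\omega)$ as $T\to\infty$. We then show it is exactly conjugate to $\mathcal C(\omega)$ by a unipotent change of basis $\begin{bmatrix}1&h\\0&1\end{bmatrix}$, where $h$ solves $\Phi_TP_T\omega\Phi_T^{-1}-\mathcal C(\omega)=\partial h\pm h\partial$. The existence of such an $h$ will come from the Stokes identity with the $d_T$-exact correction, in the spirit of Austin--Braam and Viterbo. This last step is where the conditions $S>e^{C_0T}$ and $T\geqslant T_0$ enter.
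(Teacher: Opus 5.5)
The gap is in the second half of your proposal: it never produces an exact cochain map that is bijective, which is the whole content of the theorem. Your first step is fine as a substitute for the paper's $\mathbb{E}_T$ argument. That step treats $\mathbb{D}_{ST}$ as a bounded perturbation of $D_T\oplus(-1)^{\ell-1}D_T$ and gets $\dim F^k_{ST}(f,g,\omega)=\mu_k+\mu_{k-\ell+1}$ from $\|P_{ST}-P_T\oplus P_T\|=O(S^{-1})$. The problems in the second half are these.

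\emph{Scaling slip.} The factor on the second slot of $\Psi_{ST}$ must be $S^{-1}$, not $S$. Your own conjugation $(\alpha,\beta)\mapsto(\alpha,S^{-1}\beta)$ says so, and with $S$ the $\omega$-terms are off by $S^2$.

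\emph{Five lemma.} That argument presupposes a middle chain map $F_{ST}^\bullet\to C^\bullet\oplus C^{\bullet-\ell+1}$ compatible with both short exact sequences. This is exactly what is missing; as you observe, the naive first component is not a chain map.

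\emph{Lifting via $P_{ST}$.} This device does not work on the nose. For $\alpha,\beta\in F_T$,
\[
d^\omega_{ST}P_{ST}(\alpha,\beta)=P_{ST}\bigl(d_T\alpha+S^{-1}P_T(\omega\wedge\beta),\,(-1)^{\ell-1}d_T\beta\bigr)+S^{-1}P_{ST}\bigl((1-P_T)(\omega\wedge\beta),0\bigr).
\]
So $P_{ST}|_{F_T\oplus F_T}$ intertwines $d^\omega_{ST}$ with the cone differential of $S^{-1}P_T\omega P_T$ only up to the last term. That term is $O(S^{-2})$ but has no reason to vanish.

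\emph{The homotopy $h$.} This is not where $S>e^{C_0T}$ enters. Over a field, two degree-$\ell$ maps $\phi$ with $\partial\phi=(-1)^\ell\phi\partial$ that agree on cohomology differ by $\partial h+(-1)^\ell h\partial$. Agreement of $\Phi_TP_T\omega\Phi_T^{-1}$ with $\mathcal{C}(\omega)$ on cohomology follows from three facts: $P_T$ is chain homotopic to $1$ on $(\Omega^\bullet(M),d_T)$, $\Phi e^{Tf}d_T=\partial\,\Phi e^{Tf}$, and Proposition \ref{cup and Phi compatible on the level of cohomology}. The convergence of matrices you assert is neither proved nor needed.

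\textbf{What the paper does.} The missing ingredient is an exact correction of the off-diagonal term. The paper takes it from Clausen--Tang--Tseng: $\Phi^\omega$, with its correction $K(\omega)$, intertwines $d^\omega$ and $\partial^\omega$ exactly. Hence $\Phi^\omega_{ST}=\Phi^\omega\circ\varrho_{ST}$ is a chain map on all of $(\Omega\oplus\Omega,d^\omega_{ST})$, and therefore on the subcomplex $F_{ST}$. All the analysis then goes into bijectivity, via $\Phi^\omega_{ST}P_{ST}J_T=\mathcal{F}_T\mathcal{N}_T\mathcal{L}_S\mathcal{Y}$ with $\mathcal{Y}$ diagonally dominant. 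This is where the exponential relation is really used in two places:
\begin{itemize}
\item $S>e^{T}$ makes every Sobolev norm of $P_{ST}J_T-J_T$ exponentially small, because the term $S^{-1}\omega^*\lrcorner(\gamma\rho)$ is not cut off by $d\gamma$.
\item $C_0=1+2\max|f|$ keeps $S^{-1}K(\omega)e^{Tf}\beta$ negligible even after dividing by the factors $e^{Tf(p)}$ in $\mathcal{F}_T$.
\end{itemize}

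\textbf{How to repair your route.} Use a Green-operator correction. Let $G=d_T^*(D_T^2)^{-1}(1-P_T)$, so that $d_TG+Gd_T=1-P_T$ and $\|G\|\leqslant 1$. Using $d_T(\omega\wedge\cdot)=(-1)^\ell\omega\wedge d_T$, the map $(\alpha,\beta)\mapsto\bigl(\alpha-S^{-1}G(\omega\wedge\beta),\beta\bigr)$ intertwines the cone differential of $S^{-1}P_T\omega P_T$ on $F_T\oplus F_T$ with $d^\omega_{ST}$ exactly. Composed with $P_{ST}$, it is $O(S^{-1})$-close to the identity on $F_T\oplus F_T$, hence bijective onto $F_{ST}$ by your dimension count. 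Then $\Phi_T\oplus\Phi_T$, rescaling the second slot by $S^{-1}$, and the unipotent change of basis by $h$ finish the proof. This works for $S$ and $T$ merely large, with no exponential relation, but it yields a non-canonical isomorphism rather than the paper's $\Phi^\omega_{ST}$. For the bare existence statement even less is needed. Your dimension count, the Clausen--Tang--Tseng quasi-isomorphism, and the Hodge-theoretic isomorphism $H(F_{ST})\cong H(\Omega\oplus\Omega,d^\omega_{ST})$ already give a cochain isomorphism. The reason is that bounded finite-dimensional complexes over $\mathbb{R}$ are determined up to isomorphism by the dimensions of their cochain and cohomology groups.
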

   
Theorem \ref{main theorem} shows that our Definition \ref{define the mapping cone witten instanton} gives a purely analytic instanton construction of the mapping cone Thom-Smale complex. In this construction, the operator $\mathbb{D}_{ST}^2$ is the Laplacian that we need. 

The usage of the parameter $T$ originates from the Witten deformation technique \cite{Bismut_Zhang_1992_theorem_Cheeger_Muller, Bismut1994, Helffer01011985, witten, wittendeformationweipingzhang}. We now explain the necessity of the other parameter $S$, which is forced by the wedge $\omega\wedge$. As we know, $\omega\wedge$ works on $\Omega^k(M)$, so at the first attempt, we start with 
\begin{align}
   \begin{split}
       d_T^\omega: \Omega^k(M)\oplus\Omega^{k-\ell+1}(M)&\to\Omega^{k+1}(M)\oplus\Omega^{k-\ell+2}(M)\\
    \begin{bmatrix}
        \alpha\\
        \beta
    \end{bmatrix} &\mapsto \begin{bmatrix}
        d + Tdf & \omega\\
        0 & (-1)^{\ell-1}(d + Tdf)
    \end{bmatrix}\begin{bmatrix}
        \alpha\\
        \beta
    \end{bmatrix}
   \end{split}
\end{align}
and its associated Laplacian $(d_T^\omega + {d_T^\omega}^*)^2$. We let $F_T^k(f,g,\omega)$ be the sum of eigenspaces of $(d_T^\omega + {d_T^\omega}^*)^2$ associated with eigenvalues in $[0,1]$ and obtain the cochain complex
\begin{align}\label{not the final answer of instanton construction}
    d_T^\omega: F_T^k(f,g,\omega)\to F_T^{k+1}(f,g,\omega).
\end{align}
Like in \cite[(5.18), (6.14)]{wittendeformationweipingzhang}, we want to obtain a suitable space $\mathbb{E}_T$ which approximates $\ker((d_T^\omega + {d_T^\omega}^*)^2|_U)$ as precisely as possible in the coordinate chart $U$ of each critical point of $f$. Aiming at finding the cochain isomorphism between (\ref{not the final answer of instanton construction}) and  (\ref{topological definition}), we wish to show that the projection of $\mathbb{E}_T$ onto $F_T^k(f,g,\omega)$ is close to $\mathbb{E}_T$ itself. In other words, we need a space $\mathbb{E}_T$ to play the role of a model for eigenforms associated with small eigenvalues of the Laplacian in the mapping cone setting. However, no matter what $\omega$ is, the unknown expression of $\omega$ forces only one choice of the space $\mathbb{E}_T$ (See (\ref{the definition of E_T space here}) and compare with \cite[Remark 3.5]{zhuang2026symplectic_char}). To make things worse, the projection of $\mathbb{E}_T$ deviates too much from $\mathbb{E}_T$ due to the norm of $\omega$. This phenomenon is a big difference between the classical setting and the mapping cone setting. In general, we do not have a cochain isomorphism between (\ref{not the final answer of instanton construction}) and  (\ref{topological definition}).

Fortunately, although we only have one choice of $\mathbb{E}_T$, we find that if the norm of $\omega$ is sufficiently small, the projection of $\mathbb{E}_T$ onto $F_T^k(f,g,\omega)$ is close to $\mathbb{E}_T$. Thus, we suppress the norm of $\omega$ by introducing the parameter $S$ in (\ref{perturbed deformed mapping cone de rham complex}) without changing the cohomology. Then, we replace $F_T^k(f,g,\omega)$ in (\ref{not the final answer of instanton construction}) by $F_{ST}^k(f,g,\omega)$ in (\ref{definition of fstkfgomega}). Once we let $S$ be much larger than $T$, the projection of $\mathbb{E}_T$ onto $F_{ST}^k(f,g,\omega)$ is close to $\mathbb{E}_T$. The details are in Proposition \ref{refinement of the estimates of proj vs non-proj}. The usage of $S$ reveals the influence of the cup product issue when we go from the classical Morse case to the mapping cone Morse case, echoing that $\mathcal{C}(\omega)$ commutes with the map (\ref{auxiliary isomorphism on the chain level}) on the cohomological level instead of on the cochain level \cite[Theorem 3.11]{Austin1995}, and that (\ref{the map is not trivial}) is not equal to $\omega\wedge$. 

\begin{remark}\normalfont
    If we only need an isomorphism between vector spaces instead of a cochain isomorphism between cochain complexes, the closeness between $\mathbb{E}_T$ and its projection to $F_{ST}^k(f,g,\omega)$ given in Proposition \ref{refinement of the estimates of proj vs non-proj} is not necessary. See Proposition \ref{a preliminary prerequisite for the final isomorphism but this is just the very rough one} for details.
\end{remark}

\begin{remark}\normalfont
    By \cite[Theorem 7.1]{tanaka_tseng_2018}, we could intuitively view the part of $\omega$ and $\omega^*$ in $(d_T^\omega + {d_T^\omega}^*)^2$ as the ``$d+d^*$'' on the ``critical fiber'' of the pullback of $f$ on a sphere bundle over $M$. This seems to provide a degenerate analytic Morse approach only using one parameter $T$. However, even for an $\omega$ that admits such a sphere bundle
    \begin{align}
        \pi: E\to M, 
    \end{align}
    both $\omega$ and the $1$-form $\theta$ satisfying $d\theta = \pi^*\omega$ are not fully restricted on the fiber direction, meaning that $\omega$ and $\omega^*$ do not exactly give the ``$d+d^*$'' that we want. 
\end{remark}

 As an application of Theorem \ref{main theorem}, we have the following precise analytic expression of the mapping cone Morse inequalities \cite[Theorems 1.3, 3.4]{clausen_tang_tseng_2024_mappingconemorsetheory}. 
\begin{corollary}\label{morse equalities}
	Let $\mu_k = |Z^k(df)|$, $b^\omega_k$ be the dimension of the $k$-th cohomology group of the complex {\normalfont(\ref{wedging to mapping cone})}, and $R_k$ be the rank of the map {\normalfont(\ref{analytic definition})}. 
	Then, we have
	\begin{align}
		R_k+\sum_{j = -1}^k (-1)^{k-j} b_j^\omega = \sum_{j = -1}^k (-1)^{k-j} (\mu_j + \mu_{j-\ell+1})
	\end{align}
	for $-1\leqslant k\leqslant m+\ell-1$ under the conditions in Theorem \ref{main theorem}. 
\end{corollary}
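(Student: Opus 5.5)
The plan is to deduce the identity from Theorem \ref{main theorem} with a purely linear-algebraic argument on a finite-dimensional cochain complex. We fix $S,T$ as in Theorem \ref{main theorem} and write $F^k = F_{ST}^k(f,g,\omega)$. The cochain isomorphism with (\ref{topological definition}) gives
\begin{align}
\dim F^k = \dim\left(C^k(f,g)\oplus C^{k-\ell+1}(f,g)\right) = \mu_k + \mu_{k-\ell+1}.
\end{align}

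The next step is to identify the cohomology of $(F^\bullet, d^\omega_{ST})$ with that of (\ref{wedging to mapping cone}), so that its $k$-th Betti number is $b_k^\omega$. There are two routes.

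The first route is analytic. $\mathbb{D}_{ST}^2$ is an elliptic self-adjoint operator commuting with $d^\omega_{ST}$, so the standard Hodge argument applies: the complement of $F^\bullet$ in the $L^2$ decomposition is acyclic because $d^\omega_{ST}$ restricted to it is inverted via $d^{\omega*}_{ST}(\mathbb{D}_{ST}^2)^{-1}$. Hence the inclusion $F^\bullet \hookrightarrow \Omega^\bullet\oplus\Omega^{\bullet-\ell+1}$ is a quasi-isomorphism for $d^\omega_{ST}$. Combined with Proposition \ref{routinely check the deformation does not change the cohomology}, this shows $H^k(F^\bullet)$ has dimension $b_k^\omega$.

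The second route is to use Theorem \ref{main theorem} together with \cite[Theorem 1.2]{clausen_tang_tseng_2024_mappingconemorsetheory}. This also shows the cohomology of $F^\bullet$ has dimension $b_k^\omega$.

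Then we apply rank–nullity degree by degree. Let $R_k$ be the rank of $d^\omega_{ST}\colon F^k\to F^{k+1}$, and set $R_{-2}=0$, since $F^{-2}=0$. We have
\begin{align}
\dim F^k = \dim\ker(d^\omega_{ST}|_{F^k}) + R_k, \qquad b_k^\omega = \dim\ker(d^\omega_{ST}|_{F^k}) - R_{k-1}.
\end{align}
Eliminating the kernel gives
\begin{align}
R_k + R_{k-1} = \mu_k+\mu_{k-\ell+1} - b_k^\omega .
\end{align}
Taking the alternating sum $\sum_{j=-1}^k (-1)^{k-j}$ of these identities, the left side telescopes to $R_k \pm R_{-2} = R_k$. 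This yields the claimed equality for all $-1\leqslant k\leqslant m+\ell-1$.

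There is no real obstacle. The only point needing care is the identification of $H^\bullet(F^\bullet)$ with the cohomology of (\ref{wedging to mapping cone}), which is why we note both justifications above. We also need to handle the index conventions at $k=-1$ and the vanishing $\Omega^k=0$ for $k<0$ correctly, so that the boundary term drops out.
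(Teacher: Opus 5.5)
Your proposal is correct and is essentially the paper's argument: $\dim F_{ST}^k(f,g,\omega)=\mu_k+\mu_{k-\ell+1}$ comes from Theorem~\ref{main theorem}, Hodge theory gives $\dim H^k=b_k^\omega$, and rank--nullity $R_k+R_{k-1}=\mu_k+\mu_{k-\ell+1}-b_k^\omega$ is then iterated down in $k$. Your base case $R_{-2}=0$ is the right one; the paper's stated $R_{-1}=0$ fails when $\ell=0$ and $\omega$ is a nonzero constant, although the final identity, which includes the $j=-1$ term, is unaffected.
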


By Corollary \ref{morse equalities} and Theorem \ref{main theorem}, we study $R_k$ and reproduce the topological mapping cone Morse inequalities \cite[Theorems 1.3, 3.4]{clausen_tang_tseng_2024_mappingconemorsetheory} in a simple way: 
\begin{corollary}[Clausen-Tang-Tseng \cite{clausen_tang_tseng_2024_mappingconemorsetheory}, 2026]\label{reproducing inequalities here by alternating sums}
	We follow the notations in Corollary \ref{morse equalities}. Let $v_k$ be the rank of the cup product map {\normalfont(\ref{cup product map on chain elements})}. Then, we have 
	\begin{align}\label{morse inequalities from the 2026 paper}
		\sum_{j = -1}^k (-1)^{k-j} b_j^\omega \leqslant \sum_{j = -1}^k (-1)^{k-j} (\mu_j - v_{j-\ell} + \mu_{j-\ell+1} - v_{j-\ell+1})
	\end{align}
	for $-1\leqslant k\leqslant m+\ell-1$. 
\end{corollary}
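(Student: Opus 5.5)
We derive Corollary \ref{reproducing inequalities here by alternating sums} from Corollary \ref{morse equalities} by bounding $R_k$ from below. By Theorem \ref{main theorem}, the instanton complex (\ref{analytic definition}) is cochain isomorphic to the mapping cone Thom-Smale complex (\ref{topological definition}). Hence $R_k$ equals the rank of
\begin{align}
\partial^\omega: C^k(f,g)\oplus C^{k-\ell+1}(f,g)\to C^{k+1}(f,g)\oplus C^{k-\ell+2}(f,g).
\end{align}
By Corollary \ref{morse equalities},
\begin{align}
\sum_{j=-1}^k(-1)^{k-j}b_j^\omega=\sum_{j=-1}^k(-1)^{k-j}(\mu_j+\mu_{j-\ell+1})-R_k .
\end{align}
It therefore suffices to show
\begin{align}
R_k\geqslant \sum_{j=-1}^k(-1)^{k-j}(v_{j-\ell}+v_{j-\ell+1}).
\end{align}
Here $v_i$ is the rank of $\mathcal{C}(\omega): C^i(f,g)\to C^{i+\ell}(f,g)$, with $v_i=0$ when $C^i(f,g)=0$.

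The right-hand side telescopes. Put $i=j-\ell$. The sum $\sum_{j=-1}^k(-1)^{k-j}(v_{j-\ell}+v_{j-\ell+1})$ pairs $v_{j-\ell+1}$ from term $j$ with $v_{(j+1)-\ell}$ from term $j+1$. These carry opposite signs $(-1)^{k-j}$ and $(-1)^{k-j-1}$, so they cancel. What survives is $v_{k-\ell+1}$ (from $j=k$, sign $+1$) and $v_{-1-\ell}$ (from $j=-1$). The latter is $0$, since $C^{-1-\ell}(f,g)=0$ for $\ell\geqslant 0$. So the required inequality reduces to
\begin{align}
R_k\geqslant v_{k-\ell+1}.
\end{align}
I will double-check the endpoint bookkeeping, including the degenerate case $\ell=0$, where the indices $-1$ appear.

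To prove $R_k\geqslant v_{k-\ell+1}$, restrict $\partial^\omega$ to the subspace $\{(0,b): b\in\ker\partial\subset C^{k-\ell+1}(f,g)\}$. On this subspace $\partial^\omega(0,b)=(\mathcal{C}(\omega)b,0)$.

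This bound alone gives only the rank of $\mathcal{C}(\omega)$ restricted to cocycles, not $v_{k-\ell+1}$. So instead use a projection. Compose $\partial^\omega$ with the projection onto the first factor, and restrict to $\{(0,b)\}$. The resulting map is $b\mapsto \mathcal{C}(\omega)b$, which has rank $v_{k-\ell+1}$. A composition cannot have larger rank than its factors, so
\begin{align}
\operatorname{rank}\partial^\omega\geqslant \operatorname{rank}\big(\mathrm{pr}_1\circ\partial^\omega|_{0\oplus C^{k-\ell+1}}\big)=v_{k-\ell+1}.
\end{align}
This gives the inequality, and Corollary \ref{reproducing inequalities here by alternating sums} follows.

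The only real obstacle is the index bookkeeping, namely confirming the telescoping signs and the vanishing boundary terms. No analytic input is needed beyond Theorem \ref{main theorem} and Corollary \ref{morse equalities}.
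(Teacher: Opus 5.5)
Your proposal is correct and follows essentially the same route as the paper. Both identify $R_k$ with the rank of $\partial^\omega$ via the cochain isomorphism of Theorem \ref{main theorem}, bound it below by $v_{k-\ell+1}$ through the block $\mathcal{C}(\omega)$, and rewrite $-v_{k-\ell+1}$ as $-\sum_{j=-1}^k(-1)^{k-j}(v_{j-\ell}+v_{j-\ell+1})$ using $v_{-1-\ell}=0$; the paper calls this last step ``adding and subtracting $v_{j-\ell}$''. Your projection argument just spells out the ``finite dimensional linear algebra'' step that the paper leaves implicit.
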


Next, we let $H^k_{ST}(f,g,\omega)$ be the $k$-th cohomology of (\ref{analytic definition}), and $H^k(f,g)$ be the $k$-th cohomology group of the classical Thom-Smale complex (\ref{classical thom smale cochain complex}). 
\begin{corollary}\label{decomposition corollary}
The cohomology group $H^k_{ST}(f, g, \omega)$ is isomorphic to 
{\small{\begin{align}\label{decomposition expression in introduction}
\hspace{-2mm}
    \coker\left(\cupc(\omega): H^{k-\ell}(f,g)\to H^{k}(f,g)\right)\oplus
    \ker\left(\cupc(\omega): H^{k-\ell+1}(f,g)\to H^{k+1}(f,g)\right)
\end{align}
}}
\hspace{-1.5mm}under the conditions in Theorem \ref{main theorem}. 
\end{corollary}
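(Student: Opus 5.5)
By Theorem \ref{main theorem}, it suffices to compute the $k$-th cohomology of the mapping cone Thom-Smale complex (\ref{topological definition}), since a cochain isomorphism induces isomorphisms on all cohomology groups. The argument is then purely algebraic: the mapping cone of the cochain map $\cupc(\omega)$ between Thom-Smale complexes fits into a short exact sequence, and its long exact sequence yields the stated decomposition.

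\textbf{Step 1: check that $\cupc(\omega)$ is a cochain map up to sign.} The relation $\partial^\omega\circ\partial^\omega=0$ has off-diagonal component $\partial\,\cupc(\omega)+(-1)^{\ell-1}\cupc(\omega)\,\partial=0$. Hence $\cupc(\omega)$ (anti)commutes with $\partial$ and induces a well-defined map $\cupc(\omega):H^{j}(f,g)\to H^{j+\ell}(f,g)$. The sign is irrelevant for kernels and cokernels.

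\textbf{Step 2: the short exact sequence.} There is a short exact sequence of cochain complexes
\begin{align}
0\to (C^\bullet(f,g),\partial)\xrightarrow{\ a\mapsto (a,0)\ } (C^\bullet\oplus C^{\bullet-\ell+1},\partial^\omega)\xrightarrow{\ (a,b)\mapsto b\ } (C^{\bullet-\ell+1}(f,g),(-1)^{\ell-1}\partial)\to 0.
\end{align}
Both maps commute with the differentials by the upper-triangular form of $\partial^\omega$. The cohomology of the quotient complex in degree $k$ is $H^{k-\ell+1}(f,g)$, because the sign does not change cohomology.

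\textbf{Step 3: identify the connecting homomorphism.} Take a cocycle $b$. Lift it to $(0,b)$ and apply $\partial^\omega$ to get $(\cupc(\omega)b,0)$. So the connecting map $H^{k-\ell+1}(f,g)\to H^{k+1}(f,g)$ is $[b]\mapsto[\cupc(\omega)b]$, i.e.\ the induced cup product. The long exact sequence
\begin{align}
H^{k-\ell}(f,g)\xrightarrow{\cupc(\omega)}H^k(f,g)\to H^k_{ST}(f,g,\omega)\to H^{k-\ell+1}(f,g)\xrightarrow{\cupc(\omega)}H^{k+1}(f,g)
\end{align}
then gives a short exact sequence
\begin{align}
0\to\coker\bigl(\cupc(\omega):H^{k-\ell}\to H^k\bigr)\to H^k_{ST}(f,g,\omega)\to\ker\bigl(\cupc(\omega):H^{k-\ell+1}\to H^{k+1}\bigr)\to 0.
\end{align}
Over $\mathbb{R}$ this sequence splits, which gives (\ref{decomposition expression in introduction}).

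\textbf{Main obstacle.} The only delicate point is bookkeeping: keeping the degree shifts and the sign $(-1)^{\ell-1}$ straight, and handling the boundary degrees $k=-1$ and $k>m$ through the convention that $C^j(f,g)=0$ outside $0\leqslant j\leqslant m$. All the analytic content is supplied by Theorem \ref{main theorem}.
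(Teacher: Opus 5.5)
Your proposal is correct and is essentially the paper's argument. The paper builds the same short exact sequence, transported to the instanton complex through $\Phi_{ST}^\omega$ via the maps $(\Phi_{ST}^\omega)^{-1}\circ\iota$ and $\pi\circ\Phi_{ST}^\omega$. It identifies the connecting map as $\cupc(\omega)$ by the same lift of $(0,a)$, and reads off the cokernel/kernel splitting; you apply the cochain isomorphism of Theorem \ref{main theorem} before the homological algebra rather than after.
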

Although (\ref{decomposition expression in introduction}) is directly implied by \cite[Theorem 1.2]{clausen_tang_tseng_2024_mappingconemorsetheory} together with the mapping cone algebraic property of (\ref{topological definition}), we will give a detailed proof using Theorem \ref{main theorem} and diagram chasing like \cite[(2.12)]{tangtsengclausensymplecticwitten}. Unlike \cite[(2.9)]{tangtsengclausensymplecticwitten}, our instanton complex is not explicitly expressed as a mapping cone complex, but it still has the property (\ref{decomposition expression in introduction}).
An important application of (\ref{decomposition expression in introduction}) is to give an exact sequence approach to prove the mapping cone Morse inequalities \cite[Theorems 1.3, 3.4]{clausen_tang_tseng_2024_mappingconemorsetheory}. This approach provides a more natural explanation to the appearance of the rank $v_k$ of $\cupc(\omega)|_{C^k(f,g)}$ in (\ref{morse inequalities from the 2026 paper}). We refer readers to \cite[Section 3]{clausen_tang_tseng_2024_mappingconemorsetheory} for this elegant approach. 

Similar to the role of the classical Witten instanton complex in \cite{Bismut_Zhang_1992_theorem_Cheeger_Muller, Bismut1994}, a further application of our work would be studying the mapping cone version of the Ray-Singer analytic torsion \cite{ray_singer_1971r, ray_singer_1973analytic}, the associated Cheeger-M\"uller theorem \cite{cheeger1979analytic, muller1978analytic}, and the associated Bismut-Zhang theorem \cite{Bismut_Zhang_1992_theorem_Cheeger_Muller, Bismut1994}. These could be the topics where we need the purely analytic instanton construction the most. Also, we suggest studying the group action case, for example, when $\omega$ is symplectic and compatible with a group action \cite{daSilva2008}. The function then admits assumptions like those in the classical degenerate Morse cases \cite{bao2026equivariantmorsebottcohomologystabilization, wenlumorsebottineq, zhuang2025analytictopologicalrealizationsinvariant} under group actions. The analysis that we have used could still work due to certain symmetry. Here, we do not discuss the details.

This paper is organized in this order: In Section \ref{section of Review of the topological side}, we review the topological construction of the mapping cone Thom-Smale complex. In Section \ref{section of Mapping cone Hodge theory}, we review the Hodge theory for the mapping cone de Rham complex and explain the instanton complex. In Section \ref{section of Harmonic oscillators around critical points}, we present the eigenvalue behaviors of the mapping cone Laplacian when $S$ and $T$ are sufficiently large. In Section \ref{section of Isomorphism between cochain complexes}, we prove Theorem \ref{main theorem}. In Section \ref{section of Decomposition of instanton cohomology}, we prove Corollaries \ref{morse equalities}-\ref{decomposition corollary}. 

\section{Review of the topological side}\label{section of Review of the topological side}
In this section, we review the classical Thom-Smale complex \cite{Austin1995, hutchingslecturenotes, Viterbo1995} and the topological side \cite{clausen_tang_tseng_2024_mappingconemorsetheory} of the mapping cone Thom-Smale complex.

For any $p, q\in Z(df)$, we let
\begin{align}
    \widetilde{\mathcal{M}}(q, p) = \mathcal{M}(q,p)/\mathbb{R}.
\end{align}
For each $\mathcal{U}(p)$, we fix its orientation $[\mathcal{U}(p)]$. According to \cite[Section 2.2, (3)]{hutchingslecturenotes}, the orientation of $\widetilde{\mathcal{M}}(q, p)$ and that of $\mathcal{M}(q, p)$ are chosen by 
\begin{align}
    [\mathcal{U}(q)] = [\mathcal{M}(q,p)][\mathcal{U}(p)] = [\widetilde{\mathcal{M}}(q,p)][-\grad(f)][\mathcal{U}(p)].
\end{align} 
Using the closure
$\overline{\widetilde{\mathcal{M}}(q, p)}$ (resp. $\overline{\mathcal{M}(q, p)}$) of $\widetilde{\mathcal{M}}(q, p)$ (resp. $\mathcal{M}(q, p)$), we have the coboundary map
\begin{align}\label{classical Thom-Smale complex}
    \begin{split}
        \partial: C^k(f, g)&\to C^{k+1}(f, g)\\
        p&\mapsto\sum_{q\in Z^{k+1}(df)} \left(\int_{\overline{\widetilde{\mathcal{M}}(q,\hspace{+0.375mm}p)}} 1 \right)q
    \end{split}
\end{align}
and the cup product map
\begin{align}
   \begin{split}
   	\mathcal{C}(\omega): C^k(f, g)&\to C^{k+\ell}(f, g)\\
    p\in Z^k(df)&\mapsto \sum_{q\in Z^{k+\ell}(df)}\left(\int_{\overline{\mathcal{M}(q,\hspace{+0.375mm} p)}} \omega\right)q.
   	\end{split}
   \end{align}
The $\partial$ satisfies $\partial^2 = 0$.
\begin{definition}\normalfont
    The cochain complex (\ref{classical Thom-Smale complex}) is the classical Thom-Smale cochain complex associated with $(f,g)$. The associated cohomology groups are called the Thom-Smale cohomology groups of $M$. 
\end{definition}

In addition, since $\omega$ is closed, by \cite[(7)]{clausen_tang_tseng_2024_mappingconemorsetheory}, we have:
\begin{proposition}
The cup product map $\mathcal{C}(\omega)$ satisfies
    \begin{align}\label{cup product map simplified leibniz rule}
    \partial\mathcal{C}(\omega) = (-1)^{\ell}\mathcal{C}(\omega)\partial,
\end{align}
i.e., it anti-commutes with $\partial$. 
\end{proposition}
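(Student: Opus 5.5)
The plan is to compute the coefficient of a fixed critical point on both sides via a Stokes-type argument on compactified moduli spaces. Fix $p\in Z^k(df)$ and $r\in Z^{k+\ell+1}(df)$. By definition, the coefficient of $r$ in $\partial\mathcal{C}(\omega)p$ is
\begin{align*}
\sum_{q\in Z^{k+\ell}(df)}\left(\int_{\overline{\mathcal{M}(q,p)}}\omega\right)\#\widetilde{\mathcal{M}}(r,q).
\end{align*}
The coefficient of $r$ in $\mathcal{C}(\omega)\partial p$ is
\begin{align*}
\sum_{q'\in Z^{k+1}(df)}\#\widetilde{\mathcal{M}}(q',p)\int_{\overline{\mathcal{M}(r,q')}}\omega.
\end{align*}
Both sums will be identified with boundary contributions of the $(\ell+1)$-dimensional manifold $\mathcal{M}(r,p)=\mathcal{U}(r)\cap\mathcal{S}(p)$.

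First, I would use the standard compactification of $\overline{\mathcal{M}(r,p)}$ (Morse--Smale transversality, gluing) into a compact manifold with corners, the closure in $M$ being its image under evaluation. Its codimension-one boundary strata are broken configurations of two kinds. The first kind consists of the pieces $\widetilde{\mathcal{M}}(r,q)\times\mathcal{M}(q,p)$ with $q\in Z^{k+\ell}(df)$, where the rigid broken trajectory is at the top. The second kind consists of the pieces $\mathcal{M}(r,q')\times\widetilde{\mathcal{M}}(q',p)$ with $q'\in Z^{k+1}(df)$, where the rigid piece is at the bottom. Strata with intermediate critical points of other indices either have codimension at least two or contribute zero, since $\omega$ is an $\ell$-form integrated over a lower-dimensional image. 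On the first kind, the evaluation map is just evaluation on $\overline{\mathcal{M}(q,p)}$, and the finitely many rigid flow lines of $\widetilde{\mathcal{M}}(r,q)$ contribute multiplicity. On the second kind, it is evaluation on $\overline{\mathcal{M}(r,q')}$ times the finitely many rigid flow lines from $q'$ to $p$.

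Second, since $d\omega=0$, Stokes' theorem (valid for manifolds with corners, or alternatively by viewing $\overline{\mathcal{M}(r,p)}$ as a pseudocycle or current) gives
\begin{align*}
0=\int_{\overline{\mathcal{M}(r,p)}}d\omega=\int_{\partial\overline{\mathcal{M}(r,p)}}\omega.
\end{align*}
The right-hand side is a signed sum of the two types of terms above. Equivalently, one can phrase this using currents: the current $[\overline{\mathcal{M}(q,p)}]$ has boundary $\sum \pm\#\widetilde{\mathcal{M}}(\cdot)[\cdots]$ in the sense of Laudenbach's description of unstable manifold closures, which is the route behind \cite{Austin1995, Viterbo1995}.

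Third, and this is the main obstacle, I would determine the boundary orientations. I would compare the induced boundary orientation of each stratum with the product orientations given by the convention $[\mathcal{U}(q)]=[\widetilde{\mathcal{M}}(q,p)][-\grad(f)][\mathcal{U}(p)]$. The expectation is that the first kind appears with sign $+1$ (or a fixed universal sign), while the second kind acquires $(-1)^{\ell}$ relative to the first from commuting the $-\grad(f)$ direction past the $\ell$-dimensional factor $\mathcal{M}(r,q')$, where $\dim\mathcal{M}(r,q')=\ell$. To check this, I would compute locally near a broken trajectory using the gluing parameter as the inward normal, following the sign rules in \cite{hutchingslecturenotes}, which are exactly those for which $\partial^2=0$. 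Substituting the two sign-weighted sums into Stokes' identity then yields $\partial\mathcal{C}(\omega)=(-1)^{\ell}\mathcal{C}(\omega)\partial$. As a sanity check, for $\ell=0$ and $\omega=1$ the relation must reduce to the trivial identity $\partial=\partial$.
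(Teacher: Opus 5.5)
\textbf{There is a genuine gap: the sign.} Your setup is right. The paper does not prove this statement itself; it defers to Appendix A of \cite{clausen_tang_tseng_2024_mappingconemorsetheory}, where the general rule $\partial\mathcal{C}(\alpha)=\pm\mathcal{C}(d\alpha)+(-1)^\ell\mathcal{C}(\alpha)\partial$ is established. Stokes' theorem on $\overline{\mathcal{M}(r,p)}$ is the natural way to obtain it, and the only faces an $\ell$-form sees are $\widetilde{\mathcal{M}}(r,q)\times\mathcal{M}(q,p)$ with $q\in Z^{k+\ell}(df)$ and $\mathcal{M}(r,q')\times\widetilde{\mathcal{M}}(q',p)$ with $q'\in Z^{k+1}(df)$. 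Once this is in place, the entire content of the proposition is the sign, and there your proposal fails.

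Write $A$ and $B$ for the coefficients of $r$ in $\partial\mathcal{C}(\omega)p$ and $\mathcal{C}(\omega)\partial p$. Stokes gives $0=\epsilon_1A+\epsilon_2B$, so the statement requires $\epsilon_2=(-1)^{\ell+1}\epsilon_1$. You predict $\epsilon_1=1$ and $\epsilon_2=(-1)^\ell$. That yields $\partial\mathcal{C}(\omega)=(-1)^{\ell+1}\mathcal{C}(\omega)\partial$, so your last sentence does not follow from your own signs. Your $\ell=0$ check catches this if you apply it to the face bookkeeping rather than to the statement. With $\mathcal{M}(p,p)=\{p\}$, the two kinds of faces are the endpoints $p$ and $r$ of the oriented flow intervals, which carry opposite signs; equal signs would force $2\#\widetilde{\mathcal{M}}(r,p)=0$. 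The paper's general formula $\partial\mathcal{C}(\alpha)=\mathcal{C}(d\alpha)+(-1)^\ell\mathcal{C}(\alpha)\partial$ also says $\int_{\overline{\mathcal{M}(r,p)}}d\alpha=A-(-1)^{\ell}B$, with $\omega$ replaced by $\alpha$ in $A$ and $B$.

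\textbf{Where the missing factor comes from.} It is an inward/outward issue, not a reordering issue.

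On a face of the first kind, $[\mathcal{U}(r)]=[\widetilde{\mathcal{M}}(r,q)][-\grad(f)][\mathcal{U}(q)]$ and $[\mathcal{U}(q)]=[\mathcal{M}(q,p)][\mathcal{U}(p)]$ give $[\mathcal{M}(r,p)]=\pm[w][\mathcal{M}(q,p)]$. Here $\pm$ is the sign of the rigid trajectory from $r$ to $q$, and $w$ is the transported flow direction of that trajectory. The vector $w$ points toward the broken configuration, i.e.\ outward, and it already stands first. So, with the outward-normal-first convention, this face enters with $+\#\widetilde{\mathcal{M}}(r,q)$.

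On a face of the second kind, $[\mathcal{U}(r)]=[\mathcal{M}(r,q')][\widetilde{\mathcal{M}}(q',p)][-\grad(f)][\mathcal{U}(p)]$ gives $[\mathcal{M}(r,p)]=\pm[\mathcal{M}(r,q')][u]$. Here $u$ is the unstable direction at $q'$ along which the rigid trajectory leaves $q'$ toward $p$. Moving $u$ to the front costs $(-1)^\ell$, which is the part you saw. But $u$ also points into $\overline{\mathcal{M}(r,p)}$: the nearby points of $\mathcal{M}(r,p)$ are obtained by pushing points of $\mathcal{M}(r,q')$ off $\mathcal{S}(q')$ in the direction $+u$, so that they follow the rigid trajectory down to $p$. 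This costs another $-1$. These faces therefore contribute $(-1)^{\ell+1}\#\widetilde{\mathcal{M}}(q',p)\int_{\overline{\mathcal{M}(r,q')}}\omega$, and $0=A+(-1)^{\ell+1}B$ is exactly the claimed identity.

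As written, the orientation comparison is postponed and its predicted outcome is off by this sign. So your proposal only establishes $\partial\mathcal{C}(\omega)=\pm\mathcal{C}(\omega)\partial$.
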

The sign $(-1)^{\ell}$ refines \cite[(2.2)]{Austin1995} and \cite[Lemma 3]{Viterbo1995}. See \cite[Appendix A]{clausen_tang_tseng_2024_mappingconemorsetheory} for the proof. 

\begin{remark}\normalfont 
    With the orientation convention \cite[Section 2.2, (3)]{hutchingslecturenotes}, 
    if we replace $\omega$ by any smooth $\ell$-form $\alpha$ on $M$, then 
    \begin{align}\label{the current convention of orientation leads to the number}
        \partial(\mathcal{C}(\alpha)) = \mathcal{C}(d\alpha) + (-1)^\ell\mathcal{C}(\alpha)\partial.
    \end{align}
    Originally in \cite{clausen_tang_tseng_2024_mappingconemorsetheory}, with the orientation convention \cite[(54)]{clausen_tang_tseng_2024_mappingconemorsetheory}, the direction of $-\grad f$ is replaced by the direction of $\grad f$. Thus, under \cite[(54)]{clausen_tang_tseng_2024_mappingconemorsetheory}, \cite[Lemma A.1]{clausen_tang_tseng_2024_mappingconemorsetheory} gives
    \begin{align}
        \partial(\mathcal{C}(\alpha)) = -\mathcal{C}(d\alpha) + (-1)^\ell\mathcal{C}(\alpha)\partial. \tag{\ref*{the current convention of orientation leads to the number}'}
    \end{align}
    We adopt \cite[Section 2.2, (3)]{hutchingslecturenotes} since (\ref{the current convention of orientation leads to the number}) is closer to the Leibniz rule of $d$.
\end{remark}
The classical Thom-Smale cochain complex computes the Betti numbers of $M$. For each $p\in Z(df)$, we let $\overline{\mathcal{U}(p)}$ be the closure of $\mathcal{U}(p)$. Then, we have a quasi-isomorphism $\Phi$ between (\ref{classical Thom-Smale complex}) and the de Rham complex of $M$:
\begin{align}\label{original quasi isomorphism between classical thom-smale and de rham}
\begin{split}
    \Phi: \Omega^k(M) &\to C^k(f, g)\\
             \alpha   &\mapsto \sum_{p\in Z^k(df)}\left(\int_{\overline{\mathcal{U}(p)}}\alpha\right)p.
\end{split}
\end{align}
\begin{proposition}
    The map $\Phi$ is a cochain map satisfying 
    \begin{align}
        \Phi\circ d = \partial \circ \Phi.
    \end{align}
    Moreover, it induces an isomorphism on cohomology groups. 
\end{proposition}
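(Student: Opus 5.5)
The plan is to prove the cochain property by Stokes' theorem on the compactified unstable manifolds, and to obtain the cohomology isomorphism from the classical comparison between Morse and singular/de Rham cohomology.

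\textbf{Structure of the closures.} Under the Morse--Smale condition, each $\overline{\mathcal{U}(p)}$ for $p\in Z^k(df)$ is the image of a compact manifold with corners $\widehat{\mathcal{U}}(p)$. This is the space of broken flow lines starting at $p$, and it maps smoothly onto $\overline{\mathcal{U}(p)}$. So integrals of smooth forms over $\overline{\mathcal{U}(p)}$ are defined as integrals over $\widehat{\mathcal{U}}(p)$. The codimension-one boundary strata of $\widehat{\mathcal{U}}(p)$ are
\begin{align*}
\partial\widehat{\mathcal{U}}(p)=\bigsqcup_{q\in Z^{k-1}(df)}\widetilde{\mathcal{M}}(p,q)\times\widehat{\mathcal{U}}(q).
\end{align*}
All other broken pieces have codimension at least two. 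Note that $\widetilde{\mathcal{M}}(p,q)$ is a finite set of oriented points, because $\dim\mathcal{U}(p)-\dim\mathcal{U}(q)-1=0$.

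\textbf{Cochain identity.} For $\alpha\in\Omega^{k-1}(M)$, Stokes' theorem on manifolds with corners gives
\begin{align*}
\int_{\overline{\mathcal{U}(p)}}d\alpha=\sum_{q\in Z^{k-1}(df)}\epsilon(p,q)\,\#\widetilde{\mathcal{M}}(p,q)\int_{\overline{\mathcal{U}(q)}}\alpha .
\end{align*}
Here $\epsilon(p,q)$ is the sign comparing the boundary orientation with the product orientation. The convention $[\mathcal{U}(p)]=[\widetilde{\mathcal{M}}(p,q)][-\grad(f)][\mathcal{U}(q)]$ should make $\epsilon(p,q)=+1$, since $-\grad(f)$ points outward near the broken end. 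The coefficient of $p$ in $\partial\Phi(\alpha)$ is $\sum_q\left(\int_{\overline{\widetilde{\mathcal{M}}(p,q)}}1\right)\int_{\overline{\mathcal{U}(q)}}\alpha$, so this identity is exactly $\Phi(d\alpha)=\partial\Phi(\alpha)$.

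\textbf{Isomorphism on cohomology.} I would factor $\Phi$ through singular cochains. The closures $\overline{\mathcal{U}(p)}$ give a CW-type filtration of $M$ with $M^{(k)}=\bigcup_{\operatorname{ind}p\leqslant k}\overline{\mathcal{U}(p)}$, and the corresponding cellular cochain complex is $(C^\bullet(f,g),\partial)$. Then $\Phi$ equals the composition of two maps. The first is the de Rham map $\Omega^\bullet(M)\to C^\bullet_{\mathrm{sing}}(M;\mathbb{R})$, integration over smooth chains, which is a quasi-isomorphism by de Rham's theorem. The second is restriction to the fundamental chains of the cells $\overline{\mathcal{U}(p)}$, which realizes the cellular-to-singular comparison and is also a quasi-isomorphism.

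Alternatively, one can use the Witten instanton isomorphism $\Phi_T$ from (\ref{auxiliary isomorphism on the chain level}), as in Bismut--Zhang and Laudenbach. There $\Phi$ restricted to harmonic forms is shown to be injective, and a dimension count then gives bijectivity on cohomology.

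\textbf{Main obstacle.} The hard step is the first one: establishing the manifold-with-corners structure of $\widehat{\mathcal{U}}(p)$ and verifying the orientation sign $\epsilon(p,q)=+1$. For the structure I would quote Laudenbach's appendix in Bismut--Zhang, or Hutchings' notes, and use the local normal form (\ref{morse lemma}) to control the flow near critical points. The sign should follow from comparing the outward normal with $-\grad(f)$ under the Hutchings convention.
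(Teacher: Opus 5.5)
Your outline is the standard argument. The paper gives no proof of this proposition; it records it in the review section as classical background, and the usual reference for the Stokes step is Laudenbach's appendix to Bismut--Zhang. However, the description of $\partial\widehat{\mathcal{U}}(p)$ on which your Stokes step rests is wrong, and the argument needs a small repair.

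\textbf{The boundary of $\widehat{\mathcal{U}}(p)$.} In the space of broken flow lines from $p$, a stratum with $r$ breaks has codimension $r$. Hence $\widetilde{\mathcal{M}}(p,q)\times\mathcal{U}(q)$ has dimension $(k-\mathrm{ind}(q)-1)+\mathrm{ind}(q)=k-1$ for \emph{every} $q$ with $\mathrm{ind}(q)<k$. The codimension-one faces are therefore $\overline{\widetilde{\mathcal{M}}(p,q)}\times\widehat{\mathcal{U}}(q)$ for all such $q$, not only for $q\in Z^{k-1}(df)$. For the height function on $S^2$, $\widehat{\mathcal{U}}(x_{\max})$ is a closed disc, and your formula misses its boundary circle $\widetilde{\mathcal{M}}(x_{\max},x_{\min})\times\{x_{\min}\}$ entirely.

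Stokes' theorem on $\widehat{\mathcal{U}}(p)$ therefore produces extra boundary integrals, and you must show they vanish. On the face indexed by $q$, the evaluation map to $M$ factors through the projection onto $\widehat{\mathcal{U}}(q)$. When $\mathrm{ind}(q)\leqslant k-2$, that space is too small to carry the $(k-1)$-form $\alpha$, so $\mathrm{ev}^*\alpha=0$ there. Your phrase ``codimension at least two'' is true for the \emph{images} of these strata in $M$, which is Laudenbach's stratified viewpoint, but not inside $\widehat{\mathcal{U}}(p)$.

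\textbf{The sign.} With this repair the identity follows, and the sign you left open is indeed $+1$. Work in the normal-form chart at $q$, with unstable and stable coordinates $(u,s)$ and $u$ positively oriented for $[\mathcal{U}(q)]$. Points of $\mathcal{U}(p)$ near the broken end are $(ue^{t},(s_0+h(u))e^{-t})$, where $(0,s_0)$ is the entry point of the trajectory. The convention orients them as $\pm[\partial_t][\partial_u]$, with $\pm$ the sign of the trajectory in $\widetilde{\mathcal{M}}(p,q)$. Passing to collar coordinates $(\epsilon,v)=(e^{-t},ue^{t})$ has negative Jacobian. This is exactly absorbed by the outward normal being $-\partial_\epsilon$, and $\dim\widetilde{\mathcal{M}}(p,q)=0$ adds no further sign.

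\textbf{The isomorphism on cohomology.} Your cellular route is valid, but it rests on two substantial facts that you only assert:
\begin{enumerate}
\item the maps $\widehat{\mathcal{U}}(p)\to M$ are characteristic maps of a CW structure whose cellular coboundary is the signed count $\partial$;
\item fundamental chains $c_p$ can be chosen inductively with $\partial c_p=\sum_q\#\widetilde{\mathcal{M}}(p,q)\,c_q$ exactly, so that pairing with them is a cochain map.
\end{enumerate}

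Your alternative does not close as stated. Injectivity of $\Phi$ on harmonic forms only gives $b_k\leqslant\dim H^k(f,g)$, and the reverse inequality is the content of the claim.

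The Bismut--Zhang argument works differently. It is the $\omega=0$ case of the scheme in Section~\ref{section of Isomorphism between cochain complexes}. One shows, as in the proof of Proposition~\ref{final thing to check}, that $\Phi\circ e^{Tf}$ restricted to $F_T^\bullet(f,g)$ is a linear isomorphism onto $C^\bullet(f,g)$ for large $T$. It is then a cochain isomorphism by the identity $\Phi d=\partial\Phi$. The orthogonal complement of $F_T^\bullet(f,g)$ is $d_T$-acyclic, and $e^{Tf}$ intertwines $d_T$ with $d$, so $\Phi$ is a quasi-isomorphism.

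That route needs only the Stokes identity and the harmonic-oscillator estimates, and it gives the stronger chain-level statement that the paper generalizes. Your route avoids analysis but requires the CW-structure theorem.
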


By \cite[Theorem 2.13]{Austin1995} and \cite[Theorem 1]{Viterbo1995}, the cup product map is compatible with the quasi-isomorphism on the level of cohomology.
\begin{proposition}\label{cup and Phi compatible on the level of cohomology}
    For any closed $\beta\in\Omega^k(M)$, $\mathcal{C}(\omega)\Phi(\beta)$ and $\Phi(\omega\wedge\beta)$ represents the same Thom-Smale cohomology class.  
\end{proposition}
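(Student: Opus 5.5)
The plan is to realize both cohomology classes as integrals of explicit differential forms over closures of unstable manifolds, and then compare them through a Stokes-type homotopy formula.

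The starting point is that $\overline{\mathcal{U}(q)}$ carries the structure of a compact manifold with corners; it is stratified by the broken trajectories in $\overline{\mathcal{M}(q,p)}\times\overline{\mathcal{U}(p)}$. Consequently Stokes' theorem on $\overline{\mathcal{U}(q)}$ produces exactly the boundary terms that give $\Phi\circ d=\partial\circ\Phi$. To compare $\mathcal{C}(\omega)\Phi(\beta)$ with $\Phi(\omega\wedge\beta)$, I would use the standard geometric description of the cup product on Morse cochains (Austin--Braam / Viterbo). The coefficient $\int_{\overline{\mathcal{M}(q,p)}}\omega$ equals the integral of $\omega$ over the chain $\overline{\mathcal{U}(q)}\cap\overline{\mathcal{S}(p)}$.

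The key steps are as follows.

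\textbf{Step 1.} Replace $\beta$ by a cohomologous form $\beta'$ whose support lies in a neighbourhood of the union of the stable manifolds, so that $\beta'$ acts as a "Thom form" for the cocycle $\Phi(\beta)$. Concretely, I would take $\beta'=\sum_p \Phi(\beta)_p\,\tau_p+d\eta$, where $\tau_p$ is a Thom-type form of degree $k$ concentrated near $\mathcal{S}(p)$ and normalized by $\int_{\overline{\mathcal{U}(p')}}\tau_p=\delta_{pp'}$. Such forms exist by the Morse--Smale transversality and the flow-invariant construction of Austin--Braam. Because $\Phi$ and $\mathcal{C}(\omega)$ are chain maps up to the sign in (\ref{cup product map simplified leibniz rule}), it suffices to treat $\beta'$: the correction terms $\Phi(\omega\wedge d\eta)=\pm\partial\Phi(\omega\wedge\eta)$ are coboundaries, using $d\omega=0$.

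\textbf{Step 2.} Compute $\int_{\overline{\mathcal{U}(q)}}\omega\wedge\tau_p$. Since $\tau_p$ localizes onto $\overline{\mathcal{U}(q)}\cap\mathcal{S}(p)=\mathcal{M}(q,p)$, this integral equals $\pm\int_{\overline{\mathcal{M}(q,p)}}\omega$. One takes the limit as the support of $\tau_p$ shrinks, using fibre integration along the normal directions of $\mathcal{S}(p)$ inside $\mathcal{U}(q)$.

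Combining Steps 1 and 2 shows that $\Phi(\omega\wedge\beta')$ equals $\mathcal{C}(\omega)\Phi(\beta)$ up to a coboundary.

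I expect the main obstacle to be Step 1. Constructing the $\tau_p$ as genuinely closed forms that are compatible with the corner structure is delicate, since the stable manifolds are not closed. My first attempt would be to push $\beta$ forward along the flow of $-\grad(f)$ for time $t\to\infty$, which concentrates it near the stable manifolds. The difference would be controlled by the transgression $\int_0^t\phi_s^*(\iota_{\grad f}\beta)\,ds$, which is exact, with the limit handled by Harvey--Lawson's finite-volume flow currents. The orientation signs must also be checked against the convention of \cite[Section 2.2, (3)]{hutchingslecturenotes}; this check is routine but tedious.
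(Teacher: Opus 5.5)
Your strategy of moving $\beta$ within its class to a form localized near the stable manifolds, then evaluating on $\overline{\mathcal{U}(q)}$, is the localization idea behind the results the paper cites for this proposition (\cite[Theorem 2.13]{Austin1995}, \cite[Theorem 1]{Viterbo1995}, with the exact-form adjustment of \cite[Lemma 4]{Viterbo1995}). The paper gives no independent argument. As written, however, your proof has a genuine gap, and it starts in Step 1: the forms you ask for do not exist in general. Suppose each $\tau_p$ were closed with $\int_{\overline{\mathcal{U}(p')}}\tau_p=\delta_{pp'}$. Then $\Phi(\tau_p)=p$ and $\partial p=\partial\Phi(\tau_p)=\Phi(d\tau_p)=0$ for every $p\in Z^k(df)$, so $\partial$ would vanish on $C^k(f,g)$.

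What you actually need is a cochain map $\tau\colon(C^\bullet(f,g),\partial)\to(\Omega^\bullet(M),d)$ with $d\tau_p=\tau(\partial p)$ and $\Phi\circ\tau=\mathrm{id}$. Think of $\tau_p$ as a smoothing of the current of integration over $\overline{\mathcal{S}(p)}$, whose boundary consists of the $\overline{\mathcal{S}(p')}$ with $\langle\partial p,p'\rangle\neq0$. Then $\tau(\Phi(\beta))$ is closed because $\Phi(\beta)$ is a cocycle, and it is cohomologous to $\beta$ because $\Phi$ is a quasi-isomorphism. But such a $\tau_p$ is necessarily not closed, and is not a Thom form of anything, near the frontier $\overline{\mathcal{S}(p)}\setminus\mathcal{S}(p)$. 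Constructing it with controlled support is itself a theorem, not a consequence of transversality.

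This is exactly where Step 2 becomes nontrivial, and Step 2 is the whole content of the proposition. Normal fibre integration handles compact pieces of $\mathcal{M}(q,p)$, but there are three complications you do not address:
\begin{enumerate}
\item $\mathcal{M}(q,p)$ is non-compact.
\item $\overline{\mathcal{U}(q)}\cap\mathcal{S}(p)$ also contains the lower strata $\mathcal{M}(r,p)\subset\mathcal{U}(r)\subset\overline{\mathcal{U}(q)}$.
\item Near $q$, near $p$, and near the broken trajectories in $\widetilde{\mathcal{M}}(q,r)\times\widetilde{\mathcal{M}}(r,p)$, you must show that the non-Thom part of $\tau$ contributes nothing in the limit.
\end{enumerate}

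Your flow fallback has the same hole. Let $\phi_t$ be the flow of $-\grad(f)$. The transgression does give $\Phi(\omega\wedge\phi_t^*\beta)-\Phi(\omega\wedge\beta)\in\image\partial$ for each finite $t$. Since $\image\partial$ is closed in the finite-dimensional $C^{k+\ell}(f,g)$, a limit argument would suffice. However, Harvey--Lawson only give $\phi_t^*\beta\to\sum_p\bigl(\int_{\overline{\mathcal{U}(p)}}\beta\bigr)[\overline{\mathcal{S}(p)}]$ weakly against smooth test forms on $M$. You are pairing with the singular current $\omega\wedge[\overline{\mathcal{U}(q)}]$ instead, and $\phi_t^*\beta$ in general grows exponentially at $q$ along $T_q\mathcal{U}(q)$.

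The missing ingredient is an argument on the compactified unstable manifold itself. For example, apply Stokes to the closed form $\pi_1^*\omega\wedge\pi_2^*\beta$ on the compactified flow graph $\{(x,\phi_t(x)):x\in\overline{\mathcal{U}(q)},\ t\in[0,\infty]\}\subset M\times M$:
\begin{enumerate}
\item its $t=0$ face gives $\Phi(\omega\wedge\beta)_q$;
\item its $t=\infty$ face, $\bigcup_p\overline{\mathcal{M}(q,p)}\times\overline{\mathcal{U}(p)}$, gives $(\cupc(\omega)\Phi(\beta))_q$;
\item the faces over $\widetilde{\mathcal{M}}(q,r)\times\mathcal{U}(r)$ give a coboundary.
\end{enumerate}
That is the homotopy formula your first sentence announces, but it is never set up. Establishing its corner structure is the real work.
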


\begin{remark}\normalfont 
    We need to adjust $\omega$ and $\beta$ by exact forms so that $\omega$ and $\beta$ are localized for integrations \cite[Lemma 4]{Viterbo1995}. Thus, the compatibility between $\mathcal{C}(\omega)$ and $\Phi$ holds only on cohomology classes. 
\end{remark}

Now, we present the mapping cone situation. Given the closed $\ell$-form $\omega$, we have the mapping cone de Rham cochain complex
   \begin{align}\label{mapping cone de Rham complex}
   \begin{split}
       d^\omega: \Omega^k(M)\oplus\Omega^{k-\ell+1}(M)&\to\Omega^{k+1}(M)\oplus\Omega^{k-\ell+2}(M)\ \ (-1\leqslant k\leqslant m+\ell-1)\\
    \begin{bmatrix}
        \alpha\\
        \beta
    \end{bmatrix} &\mapsto \begin{bmatrix}
        d & \omega\\
        0 & (-1)^{\ell-1}d
    \end{bmatrix}\begin{bmatrix}
        \alpha\\
        \beta
    \end{bmatrix} = \begin{bmatrix}
        d\alpha + \omega\wedge\beta\\
        (-1)^{\ell-1}d\beta
    \end{bmatrix}.
   \end{split}
\end{align}
The map $d^\omega$ satisfies $d^\omega\circ d^\omega = 0$ since $\omega$ is closed. 
\begin{definition}\normalfont
    We call (\ref{mapping cone de Rham complex}) the mapping cone de Rham cochain complex of $(M,\omega)$. 
\end{definition}

\begin{remark}\normalfont
   Let $b^\omega_k$ be the dimension of the $k$-th cohomology group of (\ref{mapping cone de Rham complex}). The values of $b_k^\omega$'s depend on the choices of $\omega$. When $\omega$ is some power of a symplectic form, the mapping cone de Rham complex computes the filtered cohomology groups of $(M,\omega)$. When the symplectic form is furthermore integral, the mapping cone de Rham complex computes the Betti numbers of a sphere bundle over $M$. 
\end{remark}

Following \cite[Definition 1.1]{clausen_tang_tseng_2024_mappingconemorsetheory}, we have the mapping cone version of the Thom-Smale complex. Let $\partial^\omega$ be the map
\begin{align}\label{thom smale mapping cone version complex review}
\begin{split}
    \partial^\omega: C^k(f, g)\oplus C^{k-\ell+1}(f, g) &\to C^{k+1}(f, g)\oplus C^{k-\ell+2}(f, g)\ \ (-1\leqslant k\leqslant m+\ell-1)\\
    \begin{bmatrix}
            a\\
            b
        \end{bmatrix}& \mapsto \begin{bmatrix}
            \partial & \mathcal{C}(\omega)\\
            0 & (-1)^{\ell-1}\partial
        \end{bmatrix}\begin{bmatrix}
            a\\
            b
        \end{bmatrix} = \begin{bmatrix}
            \partial a + \mathcal{C}(\omega)b\\
            (-1)^{\ell-1}\partial b
        \end{bmatrix}.
\end{split}
\end{align}
The $\partial^\omega$ satisfies $\partial^\omega\circ\partial^\omega = 0$ because of (\ref{cup product map simplified leibniz rule}). 

\begin{definition}\normalfont
    We call (\ref{thom smale mapping cone version complex review}) the mapping cone Thom-Smale cochain complex of $(M,\omega)$ associated with $(f,g)$. 
\end{definition}

There is a quasi-isomorphism \cite[Definition 2.1]{clausen_tang_tseng_2024_mappingconemorsetheory} between the mapping cone de Rham complex and the mapping cone Thom-Smale complex. To begin, we assign 
$C^k(f, g)$ an inner product $\langle\cdot,\cdot\rangle$ by letting 
\begin{align}\label{inner product on Ck morse}
    \langle p,q\rangle = \begin{cases}
        1\ \text{when $p = q$}\\
        0\ \text{when $p \neq q$}
    \end{cases}
\end{align}
for $p,q\in Z^k(df)$. Then, for any 
\begin{align}
a\in\image(\partial: C^k(f, g)\to C^{k+1}(f, g)),
\end{align}
there is a unique $b\in C^k(f, g)$ such that $\partial b = a$ and that $b$ is orthogonal to 
\begin{align}
    \ker(\partial: C^k(f, g)\to C^{k+1}(f, g))
\end{align}
with respect to the inner product (\ref{inner product on Ck morse}). Then, we write $\delta(a) = b$. 

Given any $\beta\in \Omega^{k-\ell+1}(M)$, by the Hodge decomposition theorem \cite[Theorem 6.8]{warner2013foundations}, we have a unique way to write 
\begin{align}
    \beta = \beta_0 + dd^*\beta_1 + d^*d\beta_1\ \ (\beta_0, \beta_1\in\Omega^{k-\ell+1}(M))
\end{align}
under the $L^2$ norm (induced by $g$) on differential forms. Here, $d^*$ is the formal adjoint of $d$, $\beta_0$ is harmonic, and $\beta_0$ and $\beta_1$ are orthogonal to each other.  
By Proposition \ref{cup and Phi compatible on the level of cohomology}, 
\begin{align}
    \Phi(\omega\wedge\beta_0)-\mathcal{C}(\omega)\Phi(\beta_0)\in \image(\partial: C^k(f, g)\to C^{k+1}(f, g))
\end{align}
and thus we can define $\delta\left(\mathcal{C}(\omega)\Phi(\beta)-\Phi(\omega\wedge\beta)\right)$. Let $K(\omega)$ be the map
\begin{align}\label{the map K compatible with cup product}
    \begin{split}
        K(\omega): \Omega^{k-\ell+1}(M) &\to C^k(f, g)\\
        \beta &\mapsto  (-1)^\ell\left(\Phi(\omega\wedge d^*\beta_1)-\mathcal{C}(\omega)\Phi(d^*\beta_1)\right)+\delta\left(\Phi(\omega\wedge\beta_0)-\mathcal{C}(\omega)\Phi(\beta_0)\right).
    \end{split}
\end{align}
Using (\ref{original quasi isomorphism between classical thom-smale and de rham}) and (\ref{the map K compatible with cup product}), we obtain the quasi-isomorphism
\begin{align}\label{mapping cone morse and mapping cone de rham quasi-isom expression}
    \begin{split}
        \Phi^\omega: \Omega^k(M)\oplus\Omega^{k-\ell+1}(M)&\to C^k(f, g)\oplus C^{k-\ell+1}(f, g)\\
        \begin{bmatrix}
            \alpha\\
            \beta
        \end{bmatrix} &\mapsto \begin{bmatrix}
            \Phi & K(\omega)\\
           0  & \Phi 
        \end{bmatrix}\begin{bmatrix}
            \alpha\\
            \beta
        \end{bmatrix}
    \end{split}.
\end{align}
\begin{theorem}[Clausen-Tang-Tseng \cite{clausen_tang_tseng_2024_mappingconemorsetheory}]
    The map $\Phi^\omega$ satisfies 
    \begin{align}\label{mapping cone morse chain map statement}
        \Phi^\omega\circ d^\omega = \partial^\omega\circ \Phi^\omega.
    \end{align}
    Moreover, it induces an isomorphism between the cohomology groups of the two mapping cone complexes.
\end{theorem}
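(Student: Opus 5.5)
The plan is to verify (\ref{mapping cone morse chain map statement}) one block entry at a time, and then obtain the cohomology isomorphism from the five lemma.

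\emph{The chain-map identity.} Multiplying out the upper triangular matrices gives
\begin{align*}
\Phi^\omega d^\omega = \begin{bmatrix} \Phi d & \Phi\circ\omega + (-1)^{\ell-1}K(\omega)d\\ 0 & (-1)^{\ell-1}\Phi d\end{bmatrix},\qquad
\partial^\omega \Phi^\omega = \begin{bmatrix} \partial\Phi & \partial K(\omega) + \mathcal{C}(\omega)\Phi\\ 0 & (-1)^{\ell-1}\partial\Phi\end{bmatrix}.
\end{align*}
The diagonal entries agree because $\Phi d = \partial\Phi$. So everything reduces to one identity, valid for every $\beta\in\Omega^{k-\ell+1}(M)$:
\begin{align*}
\Phi(\omega\wedge\beta) + (-1)^{\ell-1}K(\omega)(d\beta) = \partial K(\omega)\beta + \mathcal{C}(\omega)\Phi(\beta).
\end{align*}

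\emph{Computing $K(\omega)(d\beta)$.} Write $\beta = \beta_0 + dd^*\beta_1 + d^*d\beta_1$, with $\beta_1 = G\beta$ for the Green operator $G$. Since $G$ commutes with $d$, the decomposition of $d\beta = dd^*d\beta_1$ has harmonic part $0$ and second component $d\beta_1$. Indeed, $d^*d(d\beta_1)=0$, and $d\beta_1$ is exact, hence orthogonal to $d\beta$'s zero harmonic part. Substituting this into (\ref{the map K compatible with cup product}) gives
\begin{align*}
(-1)^{\ell-1}K(\omega)(d\beta) = -\Phi(\omega\wedge d^*d\beta_1) + \mathcal{C}(\omega)\Phi(d^*d\beta_1).
\end{align*}

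\emph{Computing $\partial K(\omega)\beta$.} I use three facts: $\partial\Phi=\Phi d$, the Leibniz rule $d(\omega\wedge\gamma)=(-1)^\ell\omega\wedge d\gamma$ (as $\omega$ is closed), and (\ref{cup product map simplified leibniz rule}). Together they give
\begin{align*}
\partial\big((-1)^\ell(\Phi(\omega\wedge d^*\beta_1)-\mathcal{C}(\omega)\Phi(d^*\beta_1))\big) = \Phi(\omega\wedge dd^*\beta_1) - \mathcal{C}(\omega)\Phi(dd^*\beta_1).
\end{align*}
By Proposition \ref{cup and Phi compatible on the level of cohomology}, the element $x=\Phi(\omega\wedge\beta_0)-\mathcal{C}(\omega)\Phi(\beta_0)$ lies in the image of $\partial$, so $\partial\delta(x)=x$ by the definition of $\delta$. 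Adding the two contributions,
\begin{align*}
\partial K(\omega)\beta = \Phi(\omega\wedge(\beta_0+dd^*\beta_1)) - \mathcal{C}(\omega)\Phi(\beta_0+dd^*\beta_1).
\end{align*}
Inserting both computations and using $\beta=\beta_0+dd^*\beta_1+d^*d\beta_1$ together with linearity, the required identity holds term by term.

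\emph{Main obstacle.} The delicate point is the sign bookkeeping: matching $(-1)^\ell$ in $K(\omega)$, $(-1)^{\ell-1}$ in $d^\omega$, and the convention (\ref{cup product map simplified leibniz rule}). I would recheck this carefully. A second point needing care is that the Hodge decomposition of $d\beta$ is exactly the one used above; this needs $G$ to commute with $d$ and the orthogonality in the uniqueness statement.

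\emph{Cohomology isomorphism.} Both cones sit in short exact sequences of complexes. On the de Rham side,
\begin{align*}
0\to(\Omega^\bullet,d)\to(\Omega^\bullet\oplus\Omega^{\bullet-\ell+1},d^\omega)\to(\Omega^{\bullet-\ell+1},(-1)^{\ell-1}d)\to0,
\end{align*}
given by $\alpha\mapsto(\alpha,0)$ and $(\alpha,\beta)\mapsto\beta$; the Thom-Smale side has the analogous sequence with $\partial$. Because $\Phi^\omega$ is upper triangular with $\Phi$ on the diagonal, it commutes with the inclusions and the projections. Hence it induces a map between the two long exact cohomology sequences, compatible with the connecting maps by naturality. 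On the outer terms the induced maps are $\Phi$ on cohomology (up to the harmless sign $(-1)^{\ell-1}$ on the differentials, shared by both sides), and these are isomorphisms. The five lemma then shows that $\Phi^\omega$ is an isomorphism on cohomology.
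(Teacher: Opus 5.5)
Your proof is correct, including the signs: both sides of the off-diagonal identity reduce to $\Phi(\omega\wedge(\beta_0+dd^*\beta_1))+\mathcal{C}(\omega)\Phi(d^*d\beta_1)$.

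The paper does not prove this theorem. It quotes it from Clausen--Tang--Tseng, so there is no in-paper argument to compare against. When the paper later checks that $\Phi^\omega_{ST}$ is a chain map, it reduces, as you do, to the single identity $\partial K(\omega)+\mathcal{C}(\omega)\Phi=\Phi\circ\omega+(-1)^{\ell-1}K(\omega)d$, and then simply invokes (\ref{mapping cone morse chain map statement}). Your computation supplies exactly that step. It uses only inputs the paper already takes as given: $\Phi d=\partial\Phi$, the anticommutation (\ref{cup product map simplified leibniz rule}), and Proposition \ref{cup and Phi compatible on the level of cohomology}. The last of these is needed anyway for $K(\omega)$ to be defined.

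Your five-lemma argument via the two short exact sequences is the same mechanism the paper uses in Section \ref{section of Decomposition of instanton cohomology} for the instanton complex.

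One minor point of precision. The paper's normalization ``$\beta_0\perp\beta_1$'' determines $\beta_1$ only up to a harmonic summand. However, $K(\omega)$ depends on $\beta_1$ only through $d^*\beta_1$, so your choice $\beta_1=G\beta$ is legitimate. For the same reason, the orthogonality remark about $d\beta_1$ in your decomposition of $d\beta$ is not needed.
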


\section{Mapping cone Hodge theory}\label{section of Mapping cone Hodge theory}
We now begin the analytic studies. In this section, we introduce the mapping cone Laplacian and present its deformation. Also, we explain the details about the mapping cone Witten instanton complex (\ref{define the mapping cone witten instanton}).

First, we define norms. 
Let $\nabla$ be the Levi-Civita connection of 
the metric $g$ on $M$. Then for any $\alpha\in\Omega^k(M)$,
\begin{align}
    \nabla^n\alpha = \nabla\nabla\cdots\nabla\alpha
\end{align}
is a smooth section of the bundle 
\begin{align}
   \left(\otimes^n T^*M\right)\otimes\Lambda^kT^*M =  \left(T^*M\otimes T^*M\otimes\cdots\otimes T^*M\right)\otimes\Lambda^kT^*M.
\end{align}
We can naturally view $g$ as a bundle metric on $\left(\otimes^n T^*M\right)\otimes\Lambda^kT^*M$. Let $\dvol_g$ be the volume form on $M$ associated with $g$. By \cite[Part 3, Section 4, Definition 4.6]{handbookofglobalanalysis}, the $n$-th Sobolev norm determined by $g$ is given by 
\begin{align}
    \|\alpha\|_n = \left(\sum_{j = 0}^n\int_M g(\nabla^j\alpha, \nabla^j\alpha)\dvol_g\right)^{1/2}. 
\end{align}
Then, we define the norm $\|\cdot\|_n$ on $\bigoplus_{k = 0}^{m+\ell-1}\left(\Omega^k(M)\oplus\Omega^{k-\ell+1}(M)\right)$ such that for all $0\leqslant k\leqslant m+\ell-1$ and $\begin{bmatrix}
        \alpha\\
        \beta
    \end{bmatrix}\in\Omega^k(M)\oplus\Omega^{k-\ell+1}(M)$, 
\begin{align}\label{sobolev n norm expressions}
    \left\|\begin{bmatrix}
        \alpha\\
        \beta
    \end{bmatrix}\right\|_n\coloneqq \left(\left\|\alpha\right\|_n^2 + \left\|\beta\right\|_n^2\right)^{1/2}.
\end{align}
In addition, when $k\neq k'$, we require that with respect to the inner product induced by $\|\cdot\|_n$, $\Omega^k(M)\oplus\Omega^{k-\ell+1}(M)$ is orthogonal to $\Omega^{k'}(M)\oplus\Omega^{k'-\ell+1}(M)$. 
\begin{lemma}
    The norm $\|\cdot\|_n$ is well-defined on $\bigoplus_{k = 0}^{m+\ell-1}\left(\Omega^k(M)\oplus\Omega^{k-\ell+1}(M)\right)$.
\end{lemma}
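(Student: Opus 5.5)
The potential trouble is that the graded pieces $\Omega^k(M)\oplus\Omega^{k-\ell+1}(M)$ overlap as subspaces of $\left(\bigoplus_k\Omega^k(M)\right)\oplus\left(\bigoplus_k\Omega^{k-\ell+1}(M)\right)$. Consequently, the orthogonality requirement for $k\neq k'$ could clash with the componentwise formula (\ref{sobolev n norm expressions}), so well-definedness amounts to checking that no such clash occurs. The plan is to identify the whole space with an external direct sum indexed by $k$, and then observe that the two prescriptions are compatible.

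The first step is to regard the space as the external direct sum
\begin{align}
  \mathcal{V} = \bigoplus_{k = 0}^{m+\ell-1}\mathcal{V}^k, \qquad \mathcal{V}^k = \Omega^k(M)\oplus\Omega^{k-\ell+1}(M),
\end{align}
as in the isomorphism displayed after Definition \ref{define the mapping cone witten instanton}. This isomorphism is canonical: an element of $\mathcal{V}$ is a finite tuple $(\mathbf{w}_k)_k$ with $\mathbf{w}_k=(\alpha_k,\beta_k)\in\mathcal{V}^k$, where $\alpha_k$ is the degree-$k$ piece of the first factor and $\beta_k$ is the degree-$(k-\ell+1)$ piece of the second factor. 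The decomposition is unique because the forms in $\bigoplus_k\Omega^k(M)$ have a unique splitting into homogeneous degrees, and the map $k\mapsto k-\ell+1$ is a bijection of indices. So each $\mathbf{w}\in\mathcal{V}$ determines its components $\mathbf{w}_k$ uniquely.

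Next I would define the norm by
\begin{align}
  \|\mathbf{w}\|_n^2 = \sum_k\left(\|\alpha_k\|_n^2 + \|\beta_k\|_n^2\right).
\end{align}
This is the norm of the inner product $\sum_k\left(\langle\alpha_k,\alpha'_k\rangle_n+\langle\beta_k,\beta'_k\rangle_n\right)$, where $\langle\cdot,\cdot\rangle_n$ is the polarization of the Sobolev norm on forms. That inner product is symmetric, bilinear and positive definite, since $\|\cdot\|_n$ is a genuine norm on each $\Omega^j(M)$, as in \cite{handbookofglobalanalysis}.

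Finally I would verify the two stated requirements. On a single $\mathcal{V}^k$ the definition reduces to (\ref{sobolev n norm expressions}). Different $\mathcal{V}^k$ are orthogonal because the inner product pairs only components with the same index $k$. Since any inner product satisfying both requirements must equal this one by bilinearity, the norm is uniquely determined, hence well-defined.

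I expect the only real obstacle to be conceptual rather than computational. One must be careful that $\Omega^j(M)$ appears as the first summand of $\mathcal{V}^j$ and also as the second summand of $\mathcal{V}^{j+\ell-1}$. These two copies have to be treated as distinct coordinates, and the identification above does this. With that distinction made, consistency is immediate, and the vanishing conventions $\Omega^j(M)=0$ for $j<0$ or $j>m$ cause no issues.
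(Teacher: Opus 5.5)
Your argument is correct, and it takes a somewhat different route from the paper's. The paper asserts that the norm and inner product extend to the direct sum, and then checks the triangle inequality by hand in two cases:
\begin{itemize}
\item for homogeneous elements in different pieces $\mathcal{V}^k\neq\mathcal{V}^{k'}$ it uses the orthogonality requirement (Pythagoras);
\item within a single piece it uses the triangle inequality for $\|\cdot\|_n$ on each component, followed by Cauchy--Schwarz in $\mathbb{R}^2$.
\end{itemize}
You instead write down the global inner product $\sum_k\left(\langle\alpha_k,\alpha_k'\rangle_n+\langle\beta_k,\beta_k'\rangle_n\right)$. This gives every norm axiom, including the triangle inequality via Cauchy--Schwarz, at once for arbitrary and possibly non-homogeneous elements. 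The paper's two-case check explicitly covers only pairs of homogeneous elements. You also address what ``well-defined'' should mean here: by bilinearity, the componentwise formula (\ref{sobolev n norm expressions}) and the orthogonality requirement together determine a unique inner product. The paper's check, for its part, only uses that each $\|\cdot\|_n$ is a norm on forms, not that it comes from an inner product.

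One small correction to your opening framing. The pieces $\mathcal{V}^k$ do not actually overlap inside $\left(\bigoplus_k\Omega^k(M)\right)\oplus\left(\bigoplus_k\Omega^{k-\ell+1}(M)\right)$. An overlap would appear only if both factors were collapsed into a single copy of $\Omega^\bullet(M)$. Your last paragraph recognizes exactly this, so the ``potential trouble'' you raise at the start is not a real obstacle.
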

\begin{proof}
This norm and its associated inner product are naturally extended to the direct sum of direct sums, so it is well-defined. Here, we double-check the triangle inequality: 

For $\begin{bmatrix}
    \alpha\\
    \beta
\end{bmatrix}\in \Omega^{k}(M)\oplus\Omega^{k-\ell+1}(M)$ and $\begin{bmatrix}
    \alpha'\\
    \beta'
\end{bmatrix}\in \Omega^{k'}(M)\oplus\Omega^{k'-\ell+1}(M)$, when $k\neq k'$, 
    \begin{align}
        \left\|\begin{bmatrix}
        \alpha\\
        \beta
    \end{bmatrix} + \begin{bmatrix}
        \alpha'\\
        \beta'
    \end{bmatrix}\right\|_n = \left(\left\|\begin{bmatrix}
        \alpha\\
        \beta
    \end{bmatrix}\right\|_n^2 + \left\|\begin{bmatrix}
        \alpha'\\
        \beta'
    \end{bmatrix}\right\|_n^2\right)^{1/2}\leqslant\ \left\|\begin{bmatrix}
        \alpha\\
        \beta
    \end{bmatrix}\right\|_n + \left\|\begin{bmatrix}
        \alpha'\\
        \beta'
    \end{bmatrix}\right\|_n.
    \end{align}
    When $k = k'$, we have 
    \begin{align}
        & \left\|\begin{bmatrix}
        \alpha\\
        \beta
    \end{bmatrix} + \begin{bmatrix}
        \alpha'\\
        \beta'
    \end{bmatrix}\right\|_n  \nonumber\\
    =\ &\left(\left\|\alpha+\alpha'\right\|_n^2 + \left\|\beta+\beta'\right\|_n^2\right)^{1/2} \nonumber\\
    \leqslant\ &  \left(\|\alpha\|_n^2 + \|\alpha'\|_n^2 + 2\|\alpha\|_n\|\alpha'\|_n + \|\beta\|_n^2 + \|\beta'\|_n^2 + 2\|\beta\|_n\|\beta'\|_n\right)^{1/2} \nonumber\\
    \leqslant\ & \left(\|\alpha\|_n^2 + \|\beta\|_n^2 + \|\alpha'\|_n^2 + \|\beta'\|_n^2 + 2\sqrt{(\|\alpha\|_n^2+\|\beta\|_n^2)(\|\alpha'\|_n^2+\|\beta'\|_n^2)}\right)^{1/2}  \nonumber\\
    =\ & \left\|\begin{bmatrix}
        \alpha\\
        \beta
    \end{bmatrix}\right\|_n + \left\|\begin{bmatrix}
        \alpha'\\
        \beta'
    \end{bmatrix}\right\|_n.
    \end{align}
    Thus, we obtain a norm. 
\end{proof}

If there is no particular mentioning, we are using the $L^2$ norm $\|\cdot\|_0$. For convenience, we write $\|\cdot\|$ instead of $\|\cdot\|_0$. The $n$-th Sobolev norms will not be used until Section \ref{section of Isomorphism between cochain complexes}. 

For any (locally or globally defined) vector field $X$ on $M$, we let 
$X^*\coloneqq g(X,\cdot)$ be the dual $1$-form. The Clifford actions of $X$ on any form $\alpha$ are given by 
\begin{align}
    c(X)\alpha \coloneqq X^*\wedge\alpha - X\lrcorner\alpha\ \ \text{and}\ \  \hat{c}(X)\alpha \coloneqq X^*\wedge\alpha + X\lrcorner\alpha\ . 
\end{align}
For a $1$-form $\eta$, we have its dual vector field $Y$ given by 
$g(Y,\cdot) = \eta$. Then, we let 
\begin{align}
    c(\eta)\alpha \coloneqq \eta\wedge\alpha - Y\lrcorner\alpha\ \ \text{and}\ \  \hat{c}(\eta)\alpha \coloneqq \eta\wedge\alpha + Y\lrcorner\alpha\ . 
\end{align}
With the Morse function $f$ and the parameters $S>0$ and $T\geqslant 0$, the Witten deformation of $d^\omega$ is given by 
\begin{align}\label{deformed version of the mapping cone de Rham cochain complex}
   \begin{split}
       d^\omega_{ST}: \Omega^k(M)\oplus\Omega^{k-\ell+1}(M)&\to\Omega^{k+1}(M)\oplus\Omega^{k-\ell+2}(M)\\
    \begin{bmatrix}
        \alpha\\
        \beta
    \end{bmatrix} &\mapsto \begin{bmatrix}
        d+Tdf & S^{-1}\omega\\
        0 & (-1)^{\ell-1}(d+Tdf)
    \end{bmatrix}\begin{bmatrix}
        \alpha\\
        \beta
    \end{bmatrix}.
   \end{split}
\end{align}
Here, we omit the ``$\wedge$'' after $df$ and $\omega$. Let 
\begin{align}
\begin{split}
    \varrho_{ST}: \Omega^k(M)\oplus\Omega^{k-\ell+1}(M)&\to\Omega^{k+1}(M)\oplus\Omega^{k-\ell+2}(M)\\
    \begin{bmatrix}
        \alpha\\
        \beta
    \end{bmatrix} &\mapsto \begin{bmatrix}
        e^{Tf}\alpha\\
        S^{-1}e^{Tf}\beta
    \end{bmatrix}.
\end{split}
\end{align}
It is straightforward to check: 
\begin{proposition}
\label{routinely check the deformation does not change the cohomology}
For any $S>0$ and $T\geqslant 0$, the diagram
\begin{equation}
    \begin{tikzcd}
\Omega^k(M)\oplus\Omega^{k-\ell+1}(M) \arrow{r}{d^\omega_{ST}} \arrow[swap]{d}{\varrho_{ST}} & \Omega^{k+1}(M)\oplus\Omega^{k-\ell+2}(M) \arrow{d}{\varrho_{ST}} \\%
\Omega^k(M)\oplus\Omega^{k-\ell+1}(M) \arrow{r}{d^\omega}& \Omega^{k+1}(M)\oplus\Omega^{k-\ell+2}(M)
\end{tikzcd}
\end{equation}
commutes. In addition, since $\varrho_{ST}$ is invertible, $\varrho_{ST}$ is a cochain isomorphism.  
\begin{proof}
    We notice that 
    \begin{align}
    \begin{split}
         d^\omega\varrho_{ST}\begin{bmatrix}
            \alpha\\
            \beta
        \end{bmatrix} = \begin{bmatrix}
            e^{Tf} Tdf\wedge\alpha + e^{Tf}d\alpha + e^{Tf}\omega\wedge S^{-1}\beta\\
            (-1)^{\ell-1} S^{-1}(e^{Tf}Tdf\wedge\beta + e^{Tf}d\beta)
        \end{bmatrix} = \varrho_{ST} d^\omega_{ST}\begin{bmatrix}
            \alpha\\
            \beta
        \end{bmatrix}.
    \end{split}
    \end{align}
    Thus, the diagram commutes, and $\varrho_{ST}$ is a cochain isomorphism. 
\end{proof}
\end{proposition}
Therefore, the cochain complex (\ref{deformed version of the mapping cone de Rham cochain complex}) computes the same cohomology as (\ref{mapping cone de Rham complex}) does. We will  use (\ref{deformed version of the mapping cone de Rham cochain complex}) in the rest of this paper. 

\begin{proposition}
    For all $S>0$ and $T\geqslant 0$, the cochain complex {\normalfont(\ref{deformed version of the mapping cone de Rham cochain complex})} is an elliptic complex.
\end{proposition}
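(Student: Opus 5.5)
To show that the complex (\ref{deformed version of the mapping cone de Rham cochain complex}) is elliptic, I will compute its principal symbol and check exactness off the zero section. The operator $d^\omega_{ST}$ is a first-order differential operator. Its principal part comes only from the $d$ entries, since $Tdf\wedge$ and $S^{-1}\omega\wedge$ are zeroth order. For $x\in M$ and a nonzero covector $\xi\in T_x^*M$, the principal symbol is therefore, up to the usual factor of $\sqrt{-1}$,
\begin{align*}
\sigma(d^\omega_{ST})(x,\xi)=\begin{bmatrix}\xi\wedge & 0\\ 0 & (-1)^{\ell-1}\xi\wedge\end{bmatrix}
\end{align*}
acting on $\Lambda^kT_x^*M\oplus\Lambda^{k-\ell+1}T_x^*M$. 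It does not depend on $S$ or $T$.

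Ellipticity of the complex means the symbol sequence
\begin{align*}
\cdots\to\Lambda^kT_x^*M\oplus\Lambda^{k-\ell+1}T_x^*M\xrightarrow{\sigma}\Lambda^{k+1}T_x^*M\oplus\Lambda^{k-\ell+2}T_x^*M\to\cdots
\end{align*}
is exact for every $\xi\neq0$. Since the symbol is diagonal, this sequence is the direct sum of two copies of the exterior-multiplication sequence $\xi\wedge$ on $\Lambda^\bullet T_x^*M$, one of them shifted in degree and multiplied by the sign $(-1)^{\ell-1}$. The sign does not affect kernels or images. So the whole statement reduces to the classical Koszul exactness of $\xi\wedge$ for $\xi\neq0$.

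To prove that exactness, complete $\xi/|\xi|$ to an orthonormal coframe $e_1=\xi/|\xi|,e_2,\dots,e_m$. If $\xi\wedge\eta=0$, then $\eta$ contains $e_1$ in every term, so $\eta=\xi\wedge(|\xi|^{-2}\xi^\sharp\lrcorner\eta)$. A cleaner way to say this is the identity $\xi\wedge(\xi^\sharp\lrcorner\,\cdot)+\xi^\sharp\lrcorner(\xi\wedge\cdot)=|\xi|^2$, which gives a contracting homotopy.

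Equivalently, one can check that the symbol of $\mathbb{D}_{ST}^2$ is $|\xi|^2\,\mathrm{Id}$. This follows because the Laplacian of the complex equals $\mathrm{diag}(\Delta,\Delta)$ plus lower-order terms, so it is elliptic. The degree conventions, including the $k=-1$ terms and terms that vanish out of range, cause no trouble, because the zero spaces at the ends sit harmlessly in the exact sequence. There is no real obstacle here; the only point to state explicitly is that $\omega\wedge$ and $df\wedge$ are of order zero and so drop out of the principal symbol.
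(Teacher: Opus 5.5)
Your proposal is correct and follows essentially the same route as the paper: the paper also identifies the principal symbol as $\xi\wedge\oplus(-1)^{\ell-1}\xi\wedge$ (computing it from the commutator of $d^\omega_{ST}$ with multiplication by a smooth function $h$, so that $Tdf\wedge$ and $S^{-1}\omega\wedge$ drop out) and then reduces to exactness of the exterior-multiplication sequence for $\xi\neq 0$. The only difference is that the paper cites this exactness from Nicolaescu, whereas you prove it directly via the identity $\xi\wedge(\xi^\sharp\lrcorner\,\cdot)+\xi^\sharp\lrcorner(\xi\wedge\cdot)=|\xi|^2$.
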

\begin{proof}
    Let $h$ be a smooth function on $M$. Then, for all 
    $\begin{bmatrix}
        \alpha\\
        \beta
    \end{bmatrix}\in\Omega^k(M)\oplus\Omega^{k-\ell+1}(M)$,
    \begin{align}
        d^\omega_{ST} \begin{bmatrix}
            h\alpha\\
            h\beta
        \end{bmatrix} - h d^\omega_{ST}\begin{bmatrix}
            \alpha\\
            \beta
        \end{bmatrix} = \begin{bmatrix}
            dh\wedge\alpha \\
            (-1)^{\ell-1}dh\wedge\beta
        \end{bmatrix}. 
    \end{align}
    Thus, for each $x\in M$ and $\xi\in T^*_x M\backslash\{0\}$, we have the principal symbol 
    \begin{align}\label{proving the elliptic complex}
    \begin{split}
        \sigma(d_{ST}^\omega)(x,\xi): \Lambda^k T^*_x M \oplus \Lambda^{k-\ell+1} T^*_x M &\to \Lambda^{k+1} T^*_x M \oplus \Lambda^{k-\ell+2} T^*_x M\\
        \begin{bmatrix}
            \theta_1\\
            \theta_2
        \end{bmatrix}&\mapsto\begin{bmatrix}
            \xi\wedge \theta_1\\
            (-1)^{\ell-1}\xi\wedge \theta_2
        \end{bmatrix}
        \end{split}
    \end{align}
    of $d_{ST}^\omega$. By the exact sequence \cite[Example 10.4.29]{nicolaescu2020lectures}
    \begin{align}
   0\xlongrightarrow[]{}\mathbb{R}\xlongrightarrow[]{\xi\wedge}\Lambda^0 T_x^*M \xlongrightarrow[]{\xi\wedge}\Lambda^1 T_x^*M \xlongrightarrow[]{\xi\wedge}\cdots\xlongrightarrow[]{\xi\wedge}\Lambda^m T_x^*M\xlongrightarrow[]{}0, 
    \end{align}
    the map (\ref{proving the elliptic complex}) also defines an exact sequence. Thus, (\ref{deformed version of the mapping cone de Rham cochain complex}) is elliptic.
\end{proof}

Therefore, based on the Hodge theory \cite[Section 10.4.3]{nicolaescu2020lectures} of elliptic complexes, 
we are able to define the following instanton complex. 
Let 
\begin{align}
    \mathbb{D}_{ST} = d^\omega_{ST} + {d^\omega_{ST}}^*.
\end{align}
Immediately, we see that $\mathbb{D}_{ST}$ is of the form
\begin{align}
    \mathbb{D}_{ST} = \begin{bmatrix}
        d+d^*+T\hat{c}(df) & S^{-1}\omega\\
        S^{-1}\omega^* & (-1)^{\ell-1}(d+d^*+T\hat{c}(df))
    \end{bmatrix},
\end{align}
where $\omega^*$ is the adjoint of $\omega\wedge$ with respect to the $L^2$ norm on differential forms on $M$. We sometimes write it as $\omega^*\lrcorner$ since it acts like the interior multiplication. 
Then, we let
\begin{align}
	F_{ST}^k(f,g,\omega)\coloneqq\bigoplus_{0\leqslant\lambda\leqslant 1}\left\{\mathbf{w}\in\Omega^k(M)\oplus\Omega^{k-\ell+1}(M): \mathbb{D}_{ST}^2\mathbf{w} = \lambda\mathbf{w}\right\}. 
\end{align}
In other words, $F_{ST}^k(f,g,\omega)$ is the direct sum of eigenspaces associated with the eigenvalues of the Laplacian $\mathbb{D}_{ST}^2$ in $[0,1]$. 
Since $d_{ST}^\omega$ commutes with $\mathbb{D}_{ST}^2$, the restriction of $d_{ST}^\omega$ to $F_{ST}^k(f,g,\omega)$ defines a cochain complex
\begin{align}\label{revisit the instanton}
    d_{ST}^\omega: F_{ST}^k(f,g,\omega)\to F_{ST}^{k+1}(f,g,\omega)\ \ (-1\leqslant k\leqslant m+\ell-1).
    \end{align}
\begin{definition}\normalfont
    We call (\ref{revisit the instanton}) the instanton cochain complex of $(f,g,\omega)$ with parameters $S$ and $T$. 
\end{definition}
We prove that the left wedge by $\omega$ is bounded. Let 
\begin{align}
    \Omega^\bullet(M)\coloneqq \bigoplus_{k = -1}^m\Omega^k(M).
\end{align}
We have the following refined version of \cite[Theorem 6.18(i)]{warner2013foundations} at the $L^2$ norm.
\begin{lemma}\label{lemma like warner's theorem 6.18 i}
Let $
    a(\omega) = 2^m\cdot\max_{x\in M}\sqrt{g(\omega,\omega)(x)}
$. Then, we have 
\begin{align}
	\|\omega\wedge\eta\|\leqslant a(\omega)\|\eta\|
\end{align}
    for all $\eta\in\Omega^\bullet(M)$. 
\end{lemma}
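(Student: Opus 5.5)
The plan is to reduce the global $L^2$ estimate to a pointwise estimate on exterior algebras and then integrate. Fix $x\in M$ and an orthonormal basis $e^1,\dots,e^m$ of $T_x^*M$. The induced inner product on $\Lambda^\bullet T_x^*M=\bigoplus_{k}\Lambda^kT^*_xM$ makes the monomials $e^I=e^{i_1}\wedge\cdots\wedge e^{i_k}$ ($i_1<\cdots<i_k$) orthonormal, and the $\Lambda^k$ are mutually orthogonal. This matches the convention for $\|\cdot\|$ on $\Omega^\bullet(M)$, since forms of different degrees are orthogonal in $L^2$. It therefore suffices to prove the pointwise bound
\begin{align}
|\omega_x\wedge\eta_x|\leqslant 2^m|\omega_x|\,|\eta_x|
\end{align}
for every $\eta_x\in\Lambda^\bullet T_x^*M$, where $|\cdot|$ is the norm induced by $g$. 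Squaring, integrating against $\dvol_g$, and bounding $|\omega_x|$ by $\max_{x\in M}\sqrt{g(\omega,\omega)(x)}$ then gives $\|\omega\wedge\eta\|^2\leqslant a(\omega)^2\|\eta\|^2$.

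For the pointwise bound, expand $\omega_x=\sum_I\omega_Ie^I$ and $\eta_x=\sum_J\eta_Je^J$, with $I$ ranging over $\ell$-multi-indices and $J$ over all multi-indices. The key elementary fact is that for a single monomial, $e^I\wedge$ maps each basis element $e^J$ either to $0$ or to $\pm e^{I\cup J}$. Distinct $J$ with $J\cap I=\emptyset$ give distinct $I\cup J$, so $e^I\wedge$ is a partial isometry and $|e^I\wedge\eta_x|\leqslant|\eta_x|$. The triangle inequality then gives
\begin{align}
|\omega_x\wedge\eta_x|\leqslant\sum_I|\omega_I|\,|\eta_x|\leqslant \Big(\#\{I\}\Big)^{1/2}\Big(\sum_I|\omega_I|^2\Big)^{1/2}|\eta_x|
\end{align}
by Cauchy--Schwarz. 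Since $\#\{I\}=\binom{m}{\ell}\leqslant 2^m$, we get $(\#\{I\})^{1/2}\leqslant 2^{m/2}\leqslant 2^m$, and $\sum_I|\omega_I|^2=|\omega_x|^2$. This yields the claimed pointwise bound with room to spare. If $\ell>m$ or $\ell<0$, then $\omega=0$ and there is nothing to prove.

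The one step needing care is the measurability and continuity of the pointwise constant. This is harmless because $x\mapsto|\omega_x|$ is continuous on the compact manifold $M$, so its maximum exists. The orthonormal frame is used only pointwise and need not be chosen smoothly, since the inequality at each $x$ is frame-independent. The only real subtlety I expect is to confirm that the paper's $L^2$ inner product on $\Omega^\bullet(M)$, obtained by summing over degrees, agrees with the pointwise inner product on the full exterior algebra. This holds by definition, so the main obstacle is merely the bookkeeping showing that $e^I\wedge$ is norm-nonincreasing on mixed-degree forms.
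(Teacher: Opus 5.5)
Your proof is correct. It follows the paper's overall strategy: expand in an orthonormal coframe, bound the integrand pointwise with Cauchy--Schwarz, then integrate. The pointwise step, however, is done differently.

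\textbf{The paper's pointwise step.} The paper expands $\omega\wedge\eta$ in local smooth coframes glued by a partition of unity. It then applies Cauchy--Schwarz crudely over all $2^m\binom{m}{\ell}$ pairs of multi-indices, which gives the pointwise constant $\bigl(2^m\binom{m}{\ell}\bigr)^{1/2}\leqslant 2^m$. That is where the factor $2^m$ in $a(\omega)$ comes from.

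\textbf{Your pointwise step.} You observe that each $e^I\wedge$ is a partial isometry of $\Lambda^\bullet T_x^*M$. Cauchy--Schwarz is then needed only over the $\binom{m}{\ell}$ indices of $\omega$, which gives the sharper constant $\binom{m}{\ell}^{1/2}$.

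\textbf{What else your route buys.} Working at each point with an arbitrary orthonormal basis removes the need for the partition of unity, since $|\omega\wedge\eta|^2$ is already a globally defined function. Your triangle-inequality formulation also carries the absolute values explicitly. The paper's first inequality omits them; its right-hand side should read $\bigl(\sum|a^\alpha_{j_1\cdots j_\ell}|\,|b^\alpha_{i_1\cdots i_r}|\bigr)^2$. Without them the inequality can fail: for $\omega=e^1+e^2$ and $\eta=e^2-e^1$ the left side is $4$ and the unsigned sum is $0$.
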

\begin{proof}
Let $\{U_\alpha\}_{\alpha}$ be a finite open cover of $M$, and let ${\rho_\alpha}$ be the associated partition of unity with $\supp(\rho_\alpha)\subseteq U_\alpha$. Then, for each $\alpha$, we choose an orthonormal local coframe $\theta_1^\alpha, \cdots, \theta_{ m}^\alpha$ and write
\begin{align}
    \omega|_{U_\alpha} = \sum_{1\leqslant j_1<\cdots<j_\ell\leqslant m}a_{j_1\cdots j_\ell}^\alpha \theta_{j_1}^\alpha\wedge\cdots\wedge \theta_{j_\ell}^\alpha
\end{align}
and 
\begin{align} 
    \eta|_{U_\alpha} = \sum_{r = 0}^{ m}\sum_{1\leqslant i_1<\cdots<i_r\leqslant m}b_{i_1\cdots i_r}^\alpha \theta_{i_1}^\alpha\wedge\cdots\wedge \theta_{i_r}^\alpha. 
\end{align} 
Let $\dvol$ be the volume form on $M$ given by $\dvol|_{U_\alpha} = \theta_1^\alpha\wedge\cdots\wedge\theta_{m}^\alpha$. Then, 
\begin{align}
   \|\omega\wedge\eta\|^2
    =\ & \sum_\alpha\int_{U_\alpha} \rho_\alpha g(\omega\wedge\eta, \omega\wedge\eta) \dvol \nonumber\\
    \leqslant\ & \sum_\alpha\int_{U_\alpha} \rho_\alpha \left(\sum_{r = 0}^{ m}\ \sum_{1\leqslant i_1<\cdots<i_r\leqslant m}\ \sum_{1\leqslant j_1<\cdots<j_\ell\leqslant m} a_{j_1\cdots j_\ell}^\alpha b_{i_1\cdots i_r}^\alpha \right)^2 \dvol \nonumber\\
    \leqslant\ & \sum_\alpha\int_{U_\alpha} \rho_\alpha  \sum_{1\leqslant j_1<\cdots<j_\ell\leqslant m} 2^{ m}\left(a_{j_1\cdots j_\ell}^\alpha\right)^2  \sum_{r = 0}^{ m}\ \sum_{1\leqslant i_1<\cdots<i_r\leqslant m}\binom{ m}{\ell}\left(b_{i_1\cdots i_r}^\alpha \right)^2 \dvol \nonumber\\
    \leqslant\ & (a(\omega))^2\sum_\alpha\int_{U_\alpha} \rho_\alpha    \sum_{r = 0}^{ m}\ \sum_{1\leqslant i_1<\cdots<i_r\leqslant m}\left(b_{i_1\cdots i_r}^\alpha \right)^2 \dvol \nonumber\\
   % =\ & (a(\omega))^2\sum_\alpha\int_{U_\alpha} \rho_\alpha |\eta|_{U_\alpha}^2 \dvol \nonumber\\
    =\ & (a(\omega))^2 g(\eta,\eta).
\end{align}
Thus, $\|\omega\wedge\eta\|\leqslant a(\omega)\|\eta\|$. 
\end{proof}

\begin{remark}\normalfont
    Since $M$ is closed, under the Sobolev norm $\|\cdot\|_n$, similar boundedness appears for the interior multiplications and the Clifford actions of any other forms. This is again guaranteed by \cite[Theorem 6.18(i)]{warner2013foundations}, At $n = 0$, we need the refined Lemma \ref{lemma like warner's theorem 6.18 i}. At $n\geqslant 1$, \cite[Theorem 6.18(i)]{warner2013foundations} is enough in this paper. 
\end{remark}

\section{Harmonic oscillators around critical points}\label{section of Harmonic oscillators around critical points}
In this section, we review several conclusions \cite[Chapter 5]{wittendeformationweipingzhang} about harmonic oscillators in the classical Morse theory. Then, we use these conclusions to study the behavior of eigenvalues of $\mathbb{D}_{ST}^2$ for sufficiently large $S$ and $T$. 

Let $p\in Z^k(df)$. Around $p$, we adjust $(f, g)$ to obtain a coordinate chart 
\begin{align}
	U \coloneqq U(\varepsilon) = \{(x_1,\cdots,x_m): x_1^2+\cdots+x_m^2<\varepsilon\} 
\end{align} 
centered at $p$ such that these $U$'s are disjoint, and 
\begin{align}\label{morse lemma coordinate expression of f}
       f(x_1,\cdots,x_m) =\ & f(p)-\dfrac{1}{2}(x_1^2+\cdots+x_k^2) + \dfrac{1}{2}(x_{k+1}^2+\cdots+x_m^2),\\
   	g|_U =\ & dx_1^2+\cdots + dx_m^2.
   \end{align}
   Like in \cite[(7.6)]{Bismut_Zhang_1992_theorem_Cheeger_Muller}, \cite[(5.3)]{Bismut1994}, By \cite[Proposition 5.1]{Helffer01011985}, we ensure that $(f,g)$ is still a Morse-Smale pair. 
   
   Let $\Delta$ be the Laplace-Beltrami operator on smooth functions, and 
   \begin{align}
       D_T = d+d^*+T\hat{c}(df).
   \end{align}
   By \cite[(5.14)]{wittendeformationweipingzhang},  
   \begin{align}\label{local expression of harmonic oscillator correct way to read}
   	 D_T^2|_U = \Delta - mT + T^2(x_1^2+\cdots+x_m^2) + 2T\sum_{i = 1}^k \partial_i\lrcorner dx_i\wedge + 2T\sum_{i = 1}^k dx_i\wedge\partial_i\lrcorner.
   \end{align}
   The correct way to read (\ref{local expression of harmonic oscillator correct way to read}) is, given a form 
   \begin{align}
   	\alpha = \varphi dx_{j_1}\wedge\cdots\wedge dx_{j_r} \in\Omega^r(U), 
   \end{align}
   we have 
   \begin{align}
   \begin{split}
   	D_T^2\alpha =\ & (\Delta\varphi)dx_{j_1}\wedge\cdots\wedge dx_{j_r} - mT\varphi dx_{j_1}\wedge\cdots\wedge dx_{j_r} + T^2(x_1^2+\cdots+x_m^2)\varphi dx_{j_1}\wedge\cdots\wedge dx_{j_r}\\
   	&  + \varphi\cdot 2T\sum_{i = 1}^k \partial_i\lrcorner (dx_i\wedge dx_{j_1}\wedge\cdots\wedge dx_{j_r}) + \varphi\cdot 2T\sum_{i = 1}^k dx_i\wedge\partial_i\lrcorner(dx_{j_1}\wedge\cdots\wedge dx_{j_r}).
   	\end{split}
   \end{align}
   Let $\gamma$ be the smooth bump function such that $\supp(\gamma)\subseteq U(\varepsilon/2)$ and 
   \begin{align}
   	\gamma|_{U(\varepsilon/4)} = 1. 
   \end{align}
   Let 
   \begin{align}\label{The normal distribution}
   	\rho = \exp\left(-\dfrac{T}{2}(x_1^2+\cdots+x_m^2)\right)dx_1\wedge\cdots\wedge dx_k.
   \end{align}
   The form $\gamma\rho$ is globally defined on $M$. 
   Let $\xi_p$ be the form $\gamma\rho$ associated with the critical point $p$. Then, we define the space 
   \begin{align}
       E_T = \text{span}_{\mathbb{R}}\{\xi_p: p\in Z(df)\}.
   \end{align}
   Then, since $E_T$ is finite dimensional, we let $E_T^\perp$ be the orthogonal complement of $E_T$ in $\Omega^\bullet(M)$ with respect to the $L^2$ norm. 

   Recall \cite[Proposition 5.6(iii)]{wittendeformationweipingzhang}: 
   \begin{proposition}
   	There exist constants $C_1>0$ and $T_1>0$ such that when $T>T_1$, 
    \begin{align}
        \|D_T\alpha\|\geqslant C_1\sqrt{T}\|\alpha\|
    \end{align}
    for all $\alpha\in E_T^\perp$. 
   \end{proposition}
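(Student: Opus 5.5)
This is the classical lower bound of Bismut--Zhang and Zhang's book, so I would follow their argument. The plan is to split the form $\alpha\in E_T^\perp$ into a part supported near the critical points and a part away from them, and to estimate $\|D_T\alpha\|$ on each piece separately.

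\textbf{Step 1: the two local estimates.}

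Away from the critical points I use the Bochner-type identity
\begin{align}
D_T^2 = (d+d^*)^2 + T\sum_{i}\hat c(\nabla_{e_i}\grad f)c(e_i) + T^2|df|^2 ,
\end{align}
which follows from \cite[(5.14)]{wittendeformationweipingzhang}. The middle term is a zeroth-order operator bounded by $CT$. On $M\setminus\bigcup_p U(\varepsilon/4)$ we have $|df|\geqslant c>0$. So for $\alpha$ supported there,
\begin{align}
\|D_T\alpha\|^2 \geqslant (c^2T^2 - CT)\|\alpha\|^2 ,
\end{align}
which exceeds $C T\|\alpha\|^2$ once $T$ is large.

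Near each critical point $p$, on $U(\varepsilon)$, I use the harmonic oscillator formula (\ref{local expression of harmonic oscillator correct way to read}). After extending it to $\mathbb{R}^m$, its spectrum is $T$ times nonnegative integers, with kernel spanned by $\rho$ and first nonzero eigenvalue $2T$ (or $T$, depending on normalization). Hence for $\beta$ compactly supported in $U(\varepsilon)$ and orthogonal to $\rho$,
\begin{align}
\|D_T\beta\|^2 \geqslant 2T\|\beta\|^2 .
\end{align}

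\textbf{Step 2: patching the two estimates.}

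Choose a second cutoff $\tilde\gamma$ with $\tilde\gamma=1$ on $\operatorname{supp}\gamma$ and support in $U(\varepsilon)$, and set $\psi=\sqrt{1-\sum_p\tilde\gamma_p^2}$. Write
\begin{align}
\alpha=\sum_p\tilde\gamma_p^2\alpha+\psi^2\alpha .
\end{align}
This is an IMS-type localization: since $D_T(\phi\alpha)=\phi D_T\alpha + c(d\phi)\alpha$, the partition gives
\begin{align}
\|D_T\alpha\|^2 \geqslant \sum_p\|D_T(\tilde\gamma_p\alpha)\|^2 + \|D_T(\psi\alpha)\|^2 - C\|\alpha\|^2 ,
\end{align}
where the error comes from the gradients of the cutoffs and is independent of $T$. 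The outer piece $\psi\alpha$ is handled by the first estimate of Step 1.

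\textbf{Step 3: the near-critical pieces (main obstacle).}

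The term $\tilde\gamma_p\alpha$ is not exactly orthogonal to $\rho$. The issue is that $\alpha\perp\xi_p=\gamma\rho$ does not directly give $\tilde\gamma_p\alpha\perp\rho$. I expect this to be the main point to get right. Since $\tilde\gamma_p\gamma=\gamma$, the orthogonality $\alpha\perp\gamma\rho$ gives
\begin{align}
\langle\tilde\gamma_p\alpha,\rho\rangle=\langle\alpha,\tilde\gamma_p(1-\gamma)\rho\rangle .
\end{align}
On the support of $1-\gamma$ we have $|x|\geqslant\sqrt{\varepsilon}/2$, so $\|(1-\gamma)\rho\|\leqslant e^{-cT}$ while $\|\rho\|\sim T^{-m/4}$. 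After normalizing $\rho$, the component of $\tilde\gamma_p\alpha$ along it therefore has size $O(e^{-cT})\|\alpha\|$.

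Subtract this component and apply the oscillator bound from Step 1. The subtracted piece contributes at most $\|D_T\rho\|\cdot e^{-cT}$, where $D_T\rho$ on $\mathbb{R}^m$ vanishes. Since the component is tiny, this is negligible either way.

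Summing all pieces,
\begin{align}
\|D_T\alpha\|^2\geqslant (cT - C - o(1))\|\alpha\|^2 ,
\end{align}
which gives the stated estimate with a suitable $C_1>0$ for $T>T_1$.
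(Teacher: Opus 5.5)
Your argument is correct and is essentially the standard Bismut--Lebeau/Zhang localization proof. The paper gives no proof of its own here; it simply quotes \cite[Proposition 5.6(iii)]{wittendeformationweipingzhang}, whose proof uses the same three ingredients you do: the bound $|df|\geqslant c$ away from the critical points, the $O(T)$ spectral gap of the harmonic oscillator near each critical point, and a cutoff equal to $1$ on $\supp\gamma$, which makes the localized form orthogonal to $\xi_p$ and hence only $O(e^{-cT})$-close to the full Gaussian.
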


   The next proposition refines \cite[Proposition 5.6(i)(ii)]{wittendeformationweipingzhang}: 

   \begin{proposition}
       There exist constants $C_2>0$ and $T_2>0$ such that when  
       $T>T_2$, 
       \begin{align}\label{refine the DT,2}
           \|D_T\alpha\|\leqslant C_2 e^{-C_2 T}\|\alpha\|
       \end{align}
       for all $\alpha\in E_T$. 
   \end{proposition}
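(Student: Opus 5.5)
Since $D_T$ is linear and $E_T$ is spanned by the forms $\xi_p$, which have pairwise disjoint supports (each $\supp(\xi_p)\subseteq U(\varepsilon/2)$ around its own critical point), I will reduce to a single generator. For $\alpha=\sum_p a_p\xi_p$ we have $D_T\alpha=\sum_p a_pD_T\xi_p$, and the supports of the summands $D_T\xi_p\subseteq\supp(\xi_p)$ are also disjoint. Hence
\begin{align}
\|D_T\alpha\|^2=\sum_p a_p^2\|D_T\xi_p\|^2,\qquad \|\alpha\|^2=\sum_p a_p^2\|\xi_p\|^2 .
\end{align}
It therefore suffices to prove $\|D_T\xi_p\|\leqslant C_2e^{-C_2T}\|\xi_p\|$ for each fixed $p\in Z^k(df)$, and then take the worst constant over the finitely many critical points.

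Fix $p$ and work in the chart $U$, where $g$ is Euclidean and $f$ has the form (\ref{morse lemma coordinate expression of f}). I first check that $\rho$ is exactly annihilated by $D_T$ on $U$. In these coordinates $d\rho=-T\sum_{i>k}x_i\,dx_i\wedge\rho$ (the terms with $i\leqslant k$ vanish against $dx_1\wedge\cdots\wedge dx_k$). Also $Tdf\wedge\rho=T\sum_{i>k}x_i\,dx_i\wedge\rho$, so $(d+Tdf)\rho=0$. Similarly, $d^*\rho=\sum_{i\leqslant k}\partial_i(e^{-T|x|^2/2})\,\partial_i\lrcorner(\cdots)$ with the appropriate sign, and this cancels $T\,\grad(f)\lrcorner\rho=-T\sum_{i\leqslant k}x_i\,\partial_i\lrcorner\rho$. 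This gives $(d^*+T\,\grad(f)\lrcorner)\rho=0$, and hence $D_T\rho=0$ on $U$. I will verify the sign bookkeeping against the formula (\ref{local expression of harmonic oscillator correct way to read}), which is consistent with $\rho$ being the ground state: the harmonic oscillator gives eigenvalue $mT$, and the Clifford terms shift it by $-mT$.

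Next I use the Leibniz rule, $D_T(\gamma\rho)=\gamma D_T\rho+c(d\gamma)\rho=c(d\gamma)\rho$. Since $d\gamma$ is supported in the annulus $\varepsilon/4\leqslant|x|^2\leqslant\varepsilon/2$ and $|c(d\gamma)|\leqslant\sup|d\gamma|$, we get
\begin{align}
\|D_T\xi_p\|^2\leqslant C\int_{|x|^2\geqslant\varepsilon/4}e^{-T|x|^2}dx\leqslant C' e^{-T\varepsilon/4}.
\end{align}
For the lower bound, $\|\xi_p\|^2\geqslant\int_{|x|^2<\varepsilon/4}e^{-T|x|^2}dx\geqslant c\,T^{-m/2}$ for $T\geqslant 1$. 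Combining the two,
\begin{align}
\|D_T\xi_p\|\leqslant C''T^{m/4}e^{-T\varepsilon/8}\|\xi_p\|.
\end{align}
The polynomial factor is then absorbed: choose $T_2$ large and $C_2$ small enough (for instance $C_2<\varepsilon/16$, enlarging the prefactor if necessary) so that $C''T^{m/4}e^{-T\varepsilon/8}\leqslant C_2e^{-C_2T}$ for $T>T_2$. If a single constant $C_2$ cannot serve as both prefactor and rate, I will take the prefactor to be the larger one and the rate the smaller one, then rename them as a single constant. In that case the statement should be read with the coefficient $C_2$ dominating, which still holds after shrinking the rate.

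The main obstacle is the exact sign verification that $D_T\rho=0$ on $U$, i.e., that $\rho$ lies in the kernel and not merely in a small eigenspace. Everything else is the routine Gaussian tail estimate.
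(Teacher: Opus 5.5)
Your proposal is correct and follows essentially the same route as the paper. It uses $D_T\xi_p=c(d\gamma)\rho$ (because $D_T\rho=0$ on $U$), a Gaussian tail bound on the annulus where $d\gamma\neq 0$, the lower bound $\|\xi_p\|\gtrsim T^{-m/4}$, and absorption of the polynomial factor; your reduction from general $\alpha\in E_T$ to single generators via disjoint supports, and your first-order check $(d+Tdf)\rho=0=(d^*+T\,\grad(f)\lrcorner)\rho$, make explicit what the paper leaves implicit.

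Your closing hedge about the constants is unnecessary: any fixed $0<C_2<\varepsilon/8$ works as both prefactor and rate once $T_2$ is large, since $T^{m/4}e^{-(\varepsilon/8-C_2)T}\to 0$. If you cross-check against the paper's displayed formula for $D_T^2|_U$, note that its last sum should run over $i=k+1,\dots,m$; as printed it would give $D_T^2\rho=2kT\rho$, so rely on your direct first-order computation.
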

   \begin{proof}
       We normalize $\xi_p$ to $\|\xi_p\|^{-1}\xi_p$. Then, 
       \begin{align}
             D_T(\|\xi_p\|^{-1}\xi_p) = \|\xi_p\|^{-1} c(d\gamma)\rho.
       \end{align}
       By the Gaussian integral, we find that when $T$ is sufficiently large, 
       \begin{align}
      \dfrac{9}{10}\left(\dfrac{\pi}{T}\right)^{m/4} \leqslant \|\xi_p\|\leqslant \left(\dfrac{\pi}{T}\right)^{m/4}. 
   \end{align}
       Since $\supp(\gamma)\subseteq U(\varepsilon/2)$ and $\gamma|_{U(\varepsilon/4)} = 1$, we see that 
       \begin{align}
           \|c(d\gamma)\rho\|\leqslant C_2'e^{-C_2'T}.
       \end{align}
       Then, we obtain (\ref{refine the DT,2}) by absorbing $T^{m/4}$ into $e^{-C_2'T}$ when $T>T_2$.
   \end{proof}

   Now, like \cite[(9.12)]{bismutandlebeau} and \cite[(5.18)]{wittendeformationweipingzhang}, we define the space 
   \begin{align}\label{the definition of E_T space here}
       \mathbb{E}_T = \text{span}_{\mathbb{R}}\left\{\begin{bmatrix}
           \xi_p\\
           0
       \end{bmatrix}, \begin{bmatrix}
           0\\
           \xi_p
       \end{bmatrix}: p\in Z(df)\right\}\subseteq\bigoplus_{i = -1}^{m+\ell-1}\left(\Omega^{i}(M)\oplus\Omega^{i-\ell+1}(M)\right).
   \end{align}
   Let $\mathbb{E}_T^\perp$ be the orthogonal complement of $\mathbb{E}_T$ in 
   \begin{align}
       \bigoplus_{i = -1}^{m+\ell-1}\Omega^{i}(M)\oplus\Omega^{i-\ell+1}(M)
   \end{align}
   with respect to the $L^2$ norm. 
   Then, we find 
   \begin{proposition}\label{lower estimate of eigenvalues}
       When $T>T_2$, we have
       \begin{align}
           \left\|\mathbb{D}_{ST}\begin{bmatrix}
               \alpha\\
               \beta
           \end{bmatrix}\right\|\leqslant \sqrt{2}(C_2 e^{-C_2 T} + S^{-1}a(\omega))\left\|\begin{bmatrix}
               \alpha\\
               \beta
           \end{bmatrix}\right\|
       \end{align}
       for all $\begin{bmatrix}
               \alpha\\
               \beta
           \end{bmatrix}\in\mathbb{E}_T$. 
   \end{proposition}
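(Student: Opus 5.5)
The plan is to split $\mathbb{D}_{ST}$ into its diagonal and off-diagonal parts and bound each separately on $\mathbb{E}_T$. Write
\begin{align}
\mathbb{D}_{ST} = \begin{bmatrix} D_T & 0\\ 0 & (-1)^{\ell-1}D_T\end{bmatrix} + S^{-1}\begin{bmatrix} 0 & \omega\\ \omega^* & 0\end{bmatrix},
\end{align}
where $D_T = d+d^*+T\hat{c}(df)$. A general element of $\mathbb{E}_T$ has the form $\begin{bmatrix}\alpha\\ \beta\end{bmatrix}$ with $\alpha,\beta\in E_T$. This holds because $\mathbb{E}_T$ is spanned by $\begin{bmatrix}\xi_p\\0\end{bmatrix}$ and $\begin{bmatrix}0\\ \xi_p\end{bmatrix}$, and mixing degrees is harmless since different total degrees are orthogonal.

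For the diagonal part, the refined bound (\ref{refine the DT,2}) applies to $\alpha$ and $\beta$ separately, since both lie in $E_T$. Thus for $T>T_2$,
\begin{align}
\left\|\begin{bmatrix} D_T\alpha\\ (-1)^{\ell-1}D_T\beta\end{bmatrix}\right\| = \left(\|D_T\alpha\|^2+\|D_T\beta\|^2\right)^{1/2}\leqslant C_2e^{-C_2T}\left\|\begin{bmatrix}\alpha\\ \beta\end{bmatrix}\right\|.
\end{align}

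For the off-diagonal part, Lemma \ref{lemma like warner's theorem 6.18 i} gives $\|\omega\wedge\beta\|\leqslant a(\omega)\|\beta\|$. Since $\omega^*$ is the $L^2$ adjoint of $\omega\wedge$, its operator norm is the same, so $\|\omega^*\alpha\|\leqslant a(\omega)\|\alpha\|$. This is the one point needing care: the adjoint bound follows by duality, $\|\omega^*\alpha\| = \sup_{\|\eta\|=1}\langle \alpha,\omega\wedge\eta\rangle$. It also has to be applied on the full graded space $\Omega^\bullet(M)$, which the lemma covers. Hence the off-diagonal term has norm at most $S^{-1}a(\omega)\left(\|\beta\|^2+\|\alpha\|^2\right)^{1/2}$.

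Finally, apply the triangle inequality. This gives the bound $(C_2e^{-C_2T}+S^{-1}a(\omega))\|\cdot\|$, which is already stronger than the claimed bound with the factor $\sqrt{2}$. The $\sqrt{2}$ would arise if one instead estimated componentwise: bound the first component by $C_2e^{-C_2T}\|\alpha\|+S^{-1}a(\omega)\|\beta\|$ and the second analogously, then use $(x+y)^2\leqslant 2(x^2+y^2)$. Either route works, and there is no real obstacle beyond the adjoint estimate.
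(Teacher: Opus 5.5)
Your proposal is correct and follows essentially the paper's argument: the $D_T$ terms are bounded by the refined estimate on $E_T$, and the $\omega\wedge$ and $\omega^*$ terms by Lemma~\ref{lemma like warner's theorem 6.18 i}. The paper instead applies the triangle inequality to all four terms separately, getting $(C_2e^{-C_2T}+S^{-1}a(\omega))(\|\alpha\|+\|\beta\|)$, and then uses $\|\alpha\|+\|\beta\|\leqslant\sqrt{2}\,\bigl(\|\alpha\|^2+\|\beta\|^2\bigr)^{1/2}$; your diagonal/off-diagonal grouping avoids that $\sqrt{2}$, and your duality argument justifies $\|\omega^*\alpha\|\leqslant a(\omega)\|\alpha\|$, which the paper uses without comment.
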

   \begin{proof}
       Since $\alpha$ and $\beta$ are elements in $E_T$, we apply previous conclusions and get 
       \begin{align}
           & \left\|\mathbb{D}_{ST}\begin{bmatrix}
               \alpha\\
               \beta
           \end{bmatrix}\right\| \nonumber\\
           =\ & \left(\|D_T\alpha + S^{-1}\omega\wedge\beta\|^2 + \left\|S^{-1}\omega^*\lrcorner\alpha + (-1)^{\ell-1} D_T\beta\right\|^2\right)^{1/2} \nonumber\\
           \leqslant\ & \|D_T\alpha\| + \|S^{-1}\omega\wedge\beta\| + \|S^{-1}\omega^*\lrcorner\alpha \|+ \|D_T\beta\| \nonumber\\
           \leqslant\ &  \left(C_2 e^{-C_2 T} + S^{-1} a(\omega)\right)\left(\|\alpha\|+\|\beta\|\right) \nonumber\\
           \leqslant\ &  \sqrt{2}\left(C_2 e^{-C_2 T} + S^{-1} a(\omega)\right)\left\|\begin{bmatrix}
               \alpha\\
               \beta
           \end{bmatrix}\right\|
       \end{align}
       for all $\begin{bmatrix}
               \alpha\\
               \beta
           \end{bmatrix}\in\mathbb{E}_T$. 
   \end{proof}

Notice that $\begin{bmatrix}
               \alpha\\
               \beta
           \end{bmatrix}\in\mathbb{E}_T^\perp$ if and only if 
           \begin{align}
               \alpha\in E_T^\perp\ \ \text{and}\ \ \beta\in E_T^\perp
           \end{align}
           at the same time. Then, we find
   \begin{proposition}\label{upper estimate of eigenvalues}
       When $T>T_1$, we have 
       \begin{align}\label{lowerbound}
           \left\|\mathbb{D}_{ST}\begin{bmatrix}
               \alpha\\
               \beta
           \end{bmatrix}\right\|\geqslant \left(\dfrac{1}{\sqrt{2}}C_1\sqrt{T} -\dfrac{1}{\sqrt{2}}S^{-1} a(\omega)\right)\left\|\begin{bmatrix}
               \alpha\\
               \beta
           \end{bmatrix}\right\|
       \end{align}
       for all $\begin{bmatrix}
               \alpha\\
               \beta
           \end{bmatrix}\in\mathbb{E}_T^\perp$. 
   \end{proposition}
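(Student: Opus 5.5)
We will prove Proposition \ref{upper estimate of eigenvalues} by the same componentwise splitting used in the proof of Proposition \ref{lower estimate of eigenvalues}, now bounding from below instead of from above. Fix $T>T_1$ and $\begin{bmatrix}\alpha\\ \beta\end{bmatrix}\in\mathbb{E}_T^\perp$. By the remark preceding the statement, $\alpha\in E_T^\perp$ and $\beta\in E_T^\perp$. The earlier proposition (\cite[Proposition 5.6(iii)]{wittendeformationweipingzhang}) therefore gives
\begin{align}
\|D_T\alpha\|\geqslant C_1\sqrt{T}\|\alpha\|,\qquad \|D_T\beta\|\geqslant C_1\sqrt{T}\|\beta\|.
\end{align}

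Next we write out the two components of $\mathbb{D}_{ST}\begin{bmatrix}\alpha\\ \beta\end{bmatrix}$:
\begin{align}
\left\|\mathbb{D}_{ST}\begin{bmatrix}\alpha\\ \beta\end{bmatrix}\right\|=\left(\|D_T\alpha+S^{-1}\omega\wedge\beta\|^2+\|S^{-1}\omega^*\lrcorner\alpha+(-1)^{\ell-1}D_T\beta\|^2\right)^{1/2}.
\end{align}
We use the elementary inequality $(x^2+y^2)^{1/2}\geqslant\frac{1}{\sqrt2}(x+y)$ for $x,y\geqslant 0$, together with the reverse triangle inequality in each component. This yields the lower bound
\begin{align}
\frac{1}{\sqrt2}\left(\|D_T\alpha\|+\|D_T\beta\|-S^{-1}\|\omega\wedge\beta\|-S^{-1}\|\omega^*\lrcorner\alpha\|\right).
\end{align}

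Lemma \ref{lemma like warner's theorem 6.18 i} gives $\|\omega\wedge\beta\|\leqslant a(\omega)\|\beta\|$. Since $\omega^*\lrcorner$ is the $L^2$-adjoint of $\omega\wedge$ on $\Omega^\bullet(M)$, it has the same operator norm, so $\|\omega^*\lrcorner\alpha\|\leqslant a(\omega)\|\alpha\|$. Combining these with the bounds on $D_T\alpha$ and $D_T\beta$ gives
\begin{align}
\left\|\mathbb{D}_{ST}\begin{bmatrix}\alpha\\ \beta\end{bmatrix}\right\|\geqslant\frac{1}{\sqrt2}\left(C_1\sqrt T-S^{-1}a(\omega)\right)\left(\|\alpha\|+\|\beta\|\right).
\end{align}

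To finish, we need the factor $\|\alpha\|+\|\beta\|\geqslant(\|\alpha\|^2+\|\beta\|^2)^{1/2}=\left\|\begin{bmatrix}\alpha\\ \beta\end{bmatrix}\right\|$. This step is the only delicate point, because the direction of the inequality can be used only when the coefficient is nonnegative. If $C_1\sqrt T-S^{-1}a(\omega)\geqslant0$, we apply it and obtain (\ref{lowerbound}). If instead the coefficient is negative, the right side of (\ref{lowerbound}) is nonpositive, and the statement holds trivially because the left side is a norm. The adjointness bound for $\omega^*\lrcorner$ is routine: by duality, $\|\omega^*\lrcorner\alpha\|=\sup_{\|\eta\|=1}\langle\alpha,\omega\wedge\eta\rangle\leqslant a(\omega)\|\alpha\|$.
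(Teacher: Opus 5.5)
Your proof is correct and follows the paper's argument essentially line by line. You split $\mathbb{D}_{ST}$ componentwise, use $(x^2+y^2)^{1/2}\geqslant\frac{1}{\sqrt2}(x+y)$ with the reverse triangle inequality, apply the $C_1\sqrt{T}$ bound to $\alpha,\beta\in E_T^\perp$ and the bound $\|\omega\wedge\eta\|\leqslant a(\omega)\|\eta\|$, and finish with $\|\alpha\|+\|\beta\|\geqslant(\|\alpha\|^2+\|\beta\|^2)^{1/2}$; your duality argument for $\|\omega^*\lrcorner\alpha\|\leqslant a(\omega)\|\alpha\|$ and your handling of a possibly negative coefficient fill in two points the paper leaves implicit.
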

   \begin{proof}
   We find that 
       \begin{align}
           & \left\|\mathbb{D}_{ST}\begin{bmatrix}
               \alpha\\
               \beta
           \end{bmatrix}\right\|   \nonumber\\
           =\ & \left(\|D_T\alpha + S^{-1}\omega\wedge\beta\|^2 + \left\|S^{-1}\omega^*\lrcorner\alpha + (-1)^{\ell-1} D_T\beta\right\|^2\right)^{1/2}   \nonumber\\
           \geqslant\ & \dfrac{1}{\sqrt{2}}\|D_T\alpha\| -  \dfrac{1}{\sqrt{2}}\|S^{-1}\omega\wedge\beta\| + \dfrac{1}{\sqrt{2}}\|D_T\beta\|- \dfrac{1}{\sqrt{2}}\|S^{-1}\omega^*\lrcorner\alpha \|   \nonumber\\
           \geqslant\ &  \left(\dfrac{1}{\sqrt{2}}C_1\sqrt{T} -\dfrac{1}{\sqrt{2}}S^{-1} a(\omega)\right)\left(\|\alpha\|+\|\beta\|\right)   \nonumber\\
           \geqslant\ &  \left(\dfrac{1}{\sqrt{2}}C_1\sqrt{T} -\dfrac{1}{\sqrt{2}}S^{-1} a(\omega)\right)\left\|\begin{bmatrix}
               \alpha\\
               \beta
           \end{bmatrix}\right\|.
       \end{align}
       Thus, we obtain (\ref{lowerbound}).
\end{proof}
       Like \cite[Lemma 5.3]{weipingzhangnewedition}, we summarize these estimates of norms into: 
       \begin{proposition}\label{eigenvalue behavior makes some operators invertible}
           The eigenvalues of $\mathbb{D}_{ST}^2$
           belong to the disjoint union 
           \begin{align}
               \left[0, \left(\sqrt{2}(C_2 e^{-C_2 T} + S^{-1}a(\omega))\right)^2\right]\cup \left[\left(\dfrac{1}{\sqrt{2}}C_1\sqrt{T} -\dfrac{1}{\sqrt{2}}S^{-1} a(\omega)\right)^2, +\infty\right)
           \end{align}
           of two intervals when $S$ and $T$ are sufficiently large. 
       \end{proposition}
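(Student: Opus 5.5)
This follows the standard Witten-deformation spectral gap argument in the style of \cite[Lemma 5.3]{weipingzhangnewedition}. I would combine Propositions \ref{lower estimate of eigenvalues} and \ref{upper estimate of eigenvalues} through a mixed-term estimate, since the real content is that the off-diagonal cross terms between $\mathbb{E}_T$ and $\mathbb{E}_T^\perp$ are small. Write $a_{ST}=\sqrt{2}(C_2e^{-C_2T}+S^{-1}a(\omega))$ and $b_{ST}=\frac{1}{\sqrt2}C_1\sqrt T-\frac{1}{\sqrt2}S^{-1}a(\omega)$. For sufficiently large $S,T$ we have $a_{ST}$ small and $b_{ST}$ of order $\sqrt T$.

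Let $\mathbf{P}$ be the orthogonal projection onto $\mathbb{E}_T$ and $\mathbf{P}^\perp=1-\mathbf{P}$. Decompose
\begin{align}
\mathbb{D}_{ST}=\mathbf{P}\mathbb{D}_{ST}\mathbf{P}+\mathbf{P}\mathbb{D}_{ST}\mathbf{P}^\perp+\mathbf{P}^\perp\mathbb{D}_{ST}\mathbf{P}+\mathbf{P}^\perp\mathbb{D}_{ST}\mathbf{P}^\perp.
\end{align}
Proposition \ref{lower estimate of eigenvalues} gives $\|\mathbb{D}_{ST}\mathbf{P}\mathbf{w}\|\leqslant a_{ST}\|\mathbf{w}\|$. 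Since $\mathbb{D}_{ST}$ is formally self-adjoint, $\mathbf{P}\mathbb{D}_{ST}\mathbf{P}^\perp=(\mathbf{P}^\perp\mathbb{D}_{ST}\mathbf{P})^*$, so all three pieces involving $\mathbf{P}$ have norm at most $a_{ST}$. Proposition \ref{upper estimate of eigenvalues} gives $\|\mathbb{D}_{ST}\mathbf{w}'\|\geqslant b_{ST}\|\mathbf{w}'\|$ for $\mathbf{w}'\in\mathbb{E}_T^\perp$. Combined with $\|\mathbf{P}\mathbb{D}_{ST}\mathbf{w}'\|\leqslant a_{ST}\|\mathbf{w}'\|$, this yields $\|\mathbf{P}^\perp\mathbb{D}_{ST}\mathbf{P}^\perp\mathbf{w}\|\geqslant (b_{ST}-a_{ST})\|\mathbf{P}^\perp\mathbf{w}\|$.

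Now take an eigenform $\mathbf{w}$ with $\mathbb{D}_{ST}\mathbf{w}=\mu\mathbf{w}$, so $\mathbb{D}_{ST}^2$ has eigenvalue $\lambda=\mu^2$, and split $\mathbf{w}=\mathbf{w}_1+\mathbf{w}_2$ with $\mathbf{w}_1=\mathbf{P}\mathbf{w}$. Projecting the eigen-equation onto $\mathbb{E}_T^\perp$ gives
\begin{align}
\mathbf{P}^\perp\mathbb{D}_{ST}\mathbf{w}_2-\mu\mathbf{w}_2=-\mathbf{P}^\perp\mathbb{D}_{ST}\mathbf{w}_1 .
\end{align}
Projecting onto $\mathbb{E}_T$ gives $\mu\mathbf{w}_1=\mathbf{P}\mathbb{D}_{ST}\mathbf{w}_1+\mathbf{P}\mathbb{D}_{ST}\mathbf{w}_2$, hence $|\mu|\|\mathbf{w}_1\|\leqslant a_{ST}(\|\mathbf{w}_1\|+\|\mathbf{w}_2\|)$.

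A cleaner route avoids handling $\mathbf{P}^\perp\mathbb{D}_{ST}\mathbf{P}^\perp-\mu$ directly. Suppose $\mu$ lies strictly between the two intervals, say $a_{ST}'<|\mu|<b_{ST}'$ with slightly adjusted constants. Use the quadratic form
\begin{align}
\mu^2\|\mathbf{w}\|^2=\|\mathbb{D}_{ST}\mathbf{w}\|^2\geqslant\big(\|\mathbb{D}_{ST}\mathbf{w}_2\|-\|\mathbb{D}_{ST}\mathbf{w}_1\|\big)^2 .
\end{align}
The eigen-equation and the estimates above give $\|\mathbb{D}_{ST}\mathbf{w}_2\|\geqslant b_{ST}\|\mathbf{w}_2\|$ and $\|\mathbb{D}_{ST}\mathbf{w}_1\|\leqslant a_{ST}\|\mathbf{w}_1\|$. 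Then show the two possibilities are each impossible:
\begin{enumerate}[label=(\roman*)]
\item If $\|\mathbf{w}_2\|$ is comparable to $\|\mathbf{w}\|$, the inequality forces $\mu^2\gtrsim (b_{ST}-a_{ST})^2$.
\item If $\|\mathbf{w}_2\|$ is small, the $\mathbb{E}_T$-equation forces $|\mu|\lesssim a_{ST}(1+\text{small})$.
\end{enumerate}
The precise intervals stated in the proposition follow by Rayleigh-quotient and min-max reasoning applied to $\mathbb{D}_{ST}^2$. The spectral subspace for eigenvalues below the threshold cannot contain a nonzero vector of $\mathbb{E}_T^\perp$, and its intersection with the complementary spectral subspace is trivial.

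The main obstacle is obtaining exactly the stated endpoints $a_{ST}^2$ and $b_{ST}^2$ rather than slightly perturbed ones. Given the phrase ``when $S$ and $T$ are sufficiently large,'' I would first prove the dichotomy with endpoints $4a_{ST}^2$ and $b_{ST}^2/4$, which suffices for all later use with $1$ as separator. I would then try to sharpen it via the min-max principle. The number of eigenvalues at most $a_{ST}^2$ is at least $\dim\mathbb{E}_T$, because $\mathbb{E}_T$ is a test space. Every eigenvalue whose eigenvector is orthogonal to the low spectral subspace is at least $b_{ST}^2$, because that subspace is close to $\mathbb{E}_T$ and $\mathbb{E}_T^\perp$ satisfies Proposition \ref{upper estimate of eigenvalues}.
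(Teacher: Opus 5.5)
\textbf{There is a gap: the proposal does not prove the stated endpoints, and you acknowledge this.}

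Your main route applies triangle inequalities to $\|\mathbb{D}_{ST}\mathbf{w}\|$ and bounds the off-diagonal pieces $\mathbf{P}\mathbb{D}_{ST}\mathbf{P}^\perp$ and $\mathbf{P}^\perp\mathbb{D}_{ST}\mathbf{P}$ by $a_{ST}$. This necessarily loses constants. Your cases (i)/(ii) are only sketched with $\gtrsim$; carried out carefully, they give at best a dichotomy such as $|\mu|\leqslant 2a_{ST}$ or $|\mu|\geqslant b_{ST}/\sqrt{2}-a_{ST}$. They cannot give $a_{ST}^2$ and $b_{ST}^2$.

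The proposed sharpening does not repair this. You argue that eigenvectors orthogonal to the low spectral subspace have eigenvalue at least $b_{ST}^2$ ``because that subspace is close to $\mathbb{E}_T$''. Such an eigenvector is only approximately in $\mathbb{E}_T^\perp$, so this yields $b_{ST}^2$ minus error terms, never exactly $b_{ST}^2$. The weakened dichotomy would suffice for the later contour-integral arguments over $\mathbb{S}^1$. It is not, however, the proposition as stated.

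\textbf{The missing idea: the cross terms need not be small; they cancel exactly.} The paper takes $\mathbb{D}_{ST}^2\mathbf{w}=\lambda\mathbf{w}$ with $a_{ST}^2<\lambda<b_{ST}^2$ and splits $\mathbf{w}=\mathbf{w}_1+\mathbf{w}_2$ along $\mathbb{E}_T\oplus\mathbb{E}_T^\perp$. It then pairs the eigen-equation with $\mathbf{w}_1-\mathbf{w}_2$ instead of $\mathbf{w}$. Since $\mathbb{D}_{ST}^2-\lambda$ is formally self-adjoint for the real $L^2$ inner product, the mixed terms $\langle(\mathbb{D}_{ST}^2-\lambda)\mathbf{w}_2,\mathbf{w}_1\rangle-\langle(\mathbb{D}_{ST}^2-\lambda)\mathbf{w}_1,\mathbf{w}_2\rangle$ vanish. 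This leaves
\begin{align*}
0=\|\mathbb{D}_{ST}\mathbf{w}_1\|^2-\lambda\|\mathbf{w}_1\|^2-\left(\|\mathbb{D}_{ST}\mathbf{w}_2\|^2-\lambda\|\mathbf{w}_2\|^2\right)\leqslant (a_{ST}^2-\lambda)\|\mathbf{w}_1\|^2+(\lambda-b_{ST}^2)\|\mathbf{w}_2\|^2 .
\end{align*}
Both coefficients are negative, so $\mathbf{w}=0$.

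\textbf{Alternatively, your own final observations suffice as a dimension count rather than a closeness argument.} Let $N=\dim\mathbb{E}_T$ and take $S,T$ large enough that $a_{ST}<b_{ST}$.
\begin{itemize}
\item Min-max with the test space $\mathbb{E}_T$ gives at least $N$ eigenvalues, counted with multiplicity, in $[0,a_{ST}^2]$.
\item The spectral subspace $V$ for eigenvalues $<b_{ST}^2$ meets $\mathbb{E}_T^\perp$ only in $0$. Hence the orthogonal projection onto $\mathbb{E}_T$ is injective on $V$, and $\dim V\leqslant N$.
\end{itemize}
Since $V$ contains the spectral subspace for $[0,a_{ST}^2]$, the two must coincide. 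So every eigenvalue below $b_{ST}^2$ is already at most $a_{ST}^2$.
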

   \begin{proof}
    Suppose we have $\mathbf{w}$ and $\lambda$ such that 
    \begin{align}
        \left(\sqrt{2}(C_2 e^{-C_2 T} + S^{-1}a(\omega))\right)^2<\lambda<\ & \left(\dfrac{1}{\sqrt{2}}C_1\sqrt{T} -\dfrac{1}{\sqrt{2}}S^{-1} a(\omega)\right)^2,\\
        D_{ST}^2\mathbf{w} =\ & \lambda\mathbf{w}.
    \end{align}
    Then, we write $\mathbf{w} = \mathbf{w}_1+\mathbf{w}_2$ with $\mathbf{w}_1\in \mathbb{E}_T$ and $\mathbf{w}_2\in \mathbb{E}_T^\perp$. Let $\langle\cdot,\cdot\rangle$ be the inner product induced by the $L^2$ norm $\|\cdot\|$. Since $\mathbb{D}_{ST}$ is self-adjoint, we see that 
    \begin{align}
       0  =\ & \left\langle (\mathbb{D}_{ST}^2-\lambda)\mathbf{w}, \mathbf{w}_1-\mathbf{w}_2 \right\rangle    \nonumber\\
        =\ & \left\langle (\mathbb{D}_{ST}^2-\lambda)\mathbf{w}_1,\mathbf{w}_1 \right\rangle - \left\langle (\mathbb{D}_{ST}^2-\lambda)\mathbf{w}_2,\mathbf{w}_2 \right\rangle   \nonumber\\
        \leqslant\ & \left(\left(\sqrt{2}(C_2 e^{-C_2 T} + S^{-1}a(\omega))\right)^2-\lambda\right)\|\mathbf{w}_1\|^2    \nonumber\\
        & + \left(\lambda - \left(\dfrac{1}{\sqrt{2}}C_1\sqrt{T} -\dfrac{1}{\sqrt{2}}S^{-1} a(\omega)\right)^2\right) \|\mathbf{w}_2\|^2   \nonumber\\
        \leqslant\ &  0.
    \end{align}
    Therefore, $\mathbf{w}$ must be zero.  
\end{proof}
   
\section{Isomorphism between cochain complexes}\label{section of Isomorphism between cochain complexes}
In this section, we construct and prove the cochain isomorphism required in Theorem 
\ref{main theorem}. We follow the process like \cite[Chapter 6]{wittendeformationweipingzhang} to use the orthogonal projection to prove the main result. Essentially speaking, this section is a refinement of the previous section to a large extent. 

Let $I$ be the identity operator, and $\mathbb{S}^1$ be the unit circle centered at the origin. Because of Proposition \ref{eigenvalue behavior makes some operators invertible},  
we can define
\begin{align}
    P_{ST} \begin{bmatrix}
        \alpha\\
        \beta
    \end{bmatrix} = \dfrac{1}{2\pi\sqrt{-1}}\int_{z\in\mathbb{S}^1} (zI - \mathbb{D}_{ST})^{-1}\begin{bmatrix}
        \alpha\\
        \beta
    \end{bmatrix}\ dz
\end{align}
for $\alpha\in\Omega^k(M)$ and $\beta\in\Omega^{k-\ell+1}(M)$. 
This $P_T$ is the orthogonal projection 
\begin{align}
    P_{ST}: \bigoplus_{k = -1}^{m+\ell-1}\left(\Omega^k(M)\oplus\Omega^{k-\ell+1}(M)\right) \to \bigoplus_{k = -1}^{m+\ell-1} F_{ST}^k(f,g,\omega)
\end{align}
onto $\bigoplus_{k = -1}^{m+\ell-1} F_{ST}^k(f,g,\omega)$. Also, for any $p\in Z^k(df)$ and $q\in Z^{k-\ell+1}(df)$, we let 
\begin{align}
    J_T\begin{bmatrix}
        p\\
        0
    \end{bmatrix} = \|\xi_p\|^{-1}\begin{bmatrix}
        \xi_p\\
        0
    \end{bmatrix},\ \ J_T\begin{bmatrix}
        0\\
        q
    \end{bmatrix} = \|\xi_q\|^{-1}\begin{bmatrix}
        0\\
        \xi_q
    \end{bmatrix},
\end{align}
and obtain a map
\begin{align}
\begin{split}
    J_T: \bigoplus_{k = -1}^{m+\ell-1}\left(C^k(f,g)\oplus C^{k-\ell+1}(f,g)\right) \to \mathbb{E}_T.
\end{split}
\end{align}

\begin{proposition}\label{a preliminary prerequisite for the final isomorphism but this is just the very rough one}
The map $P_{ST}\circ J_T$ is an isomorphism when $S$ and $T$ are sufficiently large. 
\end{proposition}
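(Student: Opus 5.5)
We argue degree by degree: we show that for each $k$ the restriction
\[
P_{ST}\circ J_T: C^k(f,g)\oplus C^{k-\ell+1}(f,g)\to F_{ST}^k(f,g,\omega)
\]
is injective and that $\dim F_{ST}^k(f,g,\omega)=\mu_k+\mu_{k-\ell+1}$. The notation is $\mu_j=|Z^j(df)|$, $P=P_{ST}$, and $\mathbf{e}\in\mathbb{E}_T$ denotes a unit vector. Throughout, $S$ and $T$ are taken so large that Propositions \ref{lower estimate of eigenvalues}, \ref{upper estimate of eigenvalues} and \ref{eigenvalue behavior makes some operators invertible} hold, with
\[
\epsilon\coloneqq\sqrt{2}\,(C_2e^{-C_2T}+S^{-1}a(\omega))<\tfrac{1}{10}
\quad\text{and}\quad
\kappa\coloneqq\tfrac{1}{\sqrt{2}}C_1\sqrt{T}-\tfrac{1}{\sqrt{2}}S^{-1}a(\omega)>10 .
\]
Under these conditions the spectral interval $[0,1]$ contains exactly the eigenvalues lying in $[0,\epsilon^2]$, and the unit circle separates the two spectral clusters of $\mathbb{D}_{ST}$. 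Hence $P$ is the spectral projection of $\mathbb{D}_{ST}^2$ onto $[0,1]$. Since $\mathbb{D}_{ST}^2$ preserves bidegree, $P$ commutes with the degree decomposition. Since $J_T$ maps $C^k\oplus C^{k-\ell+1}$ onto the degree-$k$ part of $\mathbb{E}_T$, it suffices to work in a fixed degree $k$.

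The first step is to show that $\mathbf{e}$ is close to its projection. Expanding $\mathbf{e}$ in an eigenbasis of $\mathbb{D}_{ST}^2$, every component of $(I-P)\mathbf{e}$ has eigenvalue $>1$, so
\[
\|(I-P)\mathbf{e}\|^2\leqslant\langle\mathbb{D}_{ST}^2(I-P)\mathbf{e},(I-P)\mathbf{e}\rangle\leqslant\|\mathbb{D}_{ST}\mathbf{e}\|^2\leqslant\epsilon^2
\]
by Proposition \ref{lower estimate of eigenvalues}. Therefore $\|P\mathbf{e}\|\geqslant\sqrt{1-\epsilon^2}>0$. The map $J_T$ is an isometry onto its image: the $\xi_p$ have disjoint supports, so they are orthogonal, and $J_T$ normalizes them. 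It follows that $P\circ J_T$ is injective in each degree, and hence
\[
\dim F_{ST}^k(f,g,\omega)\geqslant\mu_k+\mu_{k-\ell+1}.
\]

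The second step is the reverse inequality. Suppose $\mathbf{w}\in F_{ST}^k(f,g,\omega)$ is nonzero and orthogonal to the degree-$k$ part of $\mathbb{E}_T$. This part is exactly the span of $J_T$ applied to $C^k\oplus C^{k-\ell+1}$, and since $\mathbf{w}$ has pure degree $k$, it is orthogonal to all of $\mathbb{E}_T$, i.e. $\mathbf{w}\in\mathbb{E}_T^\perp$. Proposition \ref{upper estimate of eigenvalues} then gives $\|\mathbb{D}_{ST}\mathbf{w}\|\geqslant\kappa\|\mathbf{w}\|$. On the other hand, $\mathbf{w}$ lies in the spectral subspace for $[0,1]$, so $\|\mathbb{D}_{ST}\mathbf{w}\|\leqslant\|\mathbf{w}\|$. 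This contradicts $\kappa>1$. Hence the orthogonal projection from $F_{ST}^k(f,g,\omega)$ to the degree-$k$ part of $\mathbb{E}_T$ is injective, which gives
\[
\dim F_{ST}^k(f,g,\omega)\leqslant\mu_k+\mu_{k-\ell+1}.
\]

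Combining the two steps, $P_{ST}\circ J_T$ is an injective linear map between spaces of equal finite dimension in each degree, so it is an isomorphism. The only point needing care is confirming that $P$ really is the spectral projection onto $F_{ST}^k(f,g,\omega)$, i.e. that no eigenvalue of $\mathbb{D}_{ST}^2$ falls in $(\epsilon^2,\kappa^2)$ and that $1$ lies in this gap. Proposition \ref{eigenvalue behavior makes some operators invertible} supplies exactly this once $T$ is large and $S^{-1}a(\omega)$ is bounded. No relation between $S$ and $T$ beyond both being large is needed here, consistent with the remark that the exponential condition matters only for the cochain-map property.
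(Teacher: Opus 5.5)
Your proof is correct. It rests on the same three estimates as the paper (Propositions~\ref{lower estimate of eigenvalues}, \ref{upper estimate of eigenvalues}, \ref{eigenvalue behavior makes some operators invertible}), but both halves take a somewhat different route.

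\textbf{Injectivity.} The paper works only with the orthonormal basis $J_T\mathbf v$ of $\mathbb{E}_T$. It applies the gap bound $\kappa$ to $(P_{ST}-I)J_T\mathbf v$, and the bound $\epsilon$ separately to $\mathbb{D}_{ST}P_{ST}J_T\mathbf v$ and $\mathbb{D}_{ST}J_T\mathbf v$, to get $\|P_{ST}J_T\mathbf v-J_T\mathbf v\|\leqslant 2\epsilon/\kappa$. It then concludes from the smallness of these perturbations of an orthonormal basis. You instead use that $P_{ST}$ commutes with $\mathbb{D}_{ST}$, so that $\|\mathbb{D}_{ST}(I-P_{ST})\mathbf e\|\leqslant\|\mathbb{D}_{ST}\mathbf e\|\leqslant\epsilon$. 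This bounds $\|(I-P_{ST})\mathbf e\|$ uniformly over all unit vectors of $\mathbb{E}_T$ and gives injectivity immediately. Using the spectral lower bound $\kappa^2$ in place of $1$ would even give $\epsilon/\kappa$.

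\textbf{Surjectivity.} The paper shows directly that any $\mathbf w$ in the instanton space orthogonal to $P_{ST}\mathbb{E}_T$ vanishes. Such a $\mathbf w$ need not lie in $\mathbb{E}_T^\perp$, so the paper splits $\mathbf w=\mathbf w_1+\mathbf w_2$ and invokes the quantitative estimate $\|P_{ST}\mathbf w_1-\mathbf w_1\|\leqslant\widetilde C_1T^{-1/2}\|\mathbf w_1\|$ to keep $\|\mathbf w_2\|$ comparable to $\|\mathbf w\|$. You only need the special case $F_{ST}^k(f,g,\omega)\cap\mathbb{E}_T^\perp=0$, which follows in one line from $\kappa>1$, and you close with a dimension count.

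Your version is shorter and needs no rate at all, only $\epsilon<1<\kappa$. The paper's version avoids comparing dimensions, and its quantitative closeness estimate is the prototype of Proposition~\ref{refinement of the estimates of proj vs non-proj}. Working degree by degree also makes explicit that $\dim F_{ST}^k(f,g,\omega)=\mu_k+\mu_{k-\ell+1}$, which the paper later uses for Corollary~\ref{morse equalities}.

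One small wording point: in your last paragraph, $S^{-1}a(\omega)$ must be small (so that $\epsilon<1$), not merely bounded. Your standing assumption $\epsilon<\tfrac{1}{10}$ already ensures this.
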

\begin{proof}
    For any $p\in Z^k(df)$, 
    \begin{align}\label{rough estimate of projection}
        & \left\|P_{ST}\circ J_T\begin{bmatrix}
            p\\
            0
        \end{bmatrix} - J_T\begin{bmatrix}
            p\\
            0
        \end{bmatrix}\right\|   \nonumber\\
        \leqslant \ & \left\|\left(\dfrac{1}{\sqrt{2}}C_1\sqrt{T} -\dfrac{1}{\sqrt{2}}S^{-1} a(\omega)\right)^{-1} \mathbb{D}_{ST}\left(P_{ST}\left(\|\xi_p\|^{-1}\begin{bmatrix}
            \xi_p\\
            0
        \end{bmatrix}\right) - \|\xi_p\|^{-1}\begin{bmatrix}
            \xi_p\\
            0
        \end{bmatrix}\right)\right\|   \nonumber\\
        \leqslant\ & \left(\dfrac{1}{\sqrt{2}}C_1\sqrt{T} -\dfrac{1}{\sqrt{2}}S^{-1} a(\omega)\right)^{-1}\left\| \mathbb{D}_{ST}P_{ST}\left(\|\xi_p\|^{-1}\begin{bmatrix}
            \xi_p\\
            0
        \end{bmatrix}\right)\right\|   \nonumber\\
        & +\left(\dfrac{1}{\sqrt{2}}C_1\sqrt{T} -\dfrac{1}{\sqrt{2}}S^{-1} a(\omega)\right)^{-1}\left\|\mathbb{D}_{ST}\left(\|\xi_p\|^{-1}\begin{bmatrix}
            \xi_p\\
            0
        \end{bmatrix}\right)\right\|   \nonumber\\
        \leqslant\ & \left(\dfrac{1}{\sqrt{2}}C_1\sqrt{T} -\dfrac{1}{\sqrt{2}}S^{-1} a(\omega)\right)^{-1}\sqrt{2}(C_2 e^{-C_2 T} + S^{-1}a(\omega)) \left(\|\xi_p\|^{-1}\left\|P_{ST}\begin{bmatrix}
            \xi_p\\
            0
        \end{bmatrix}\right\| + 1\right)   \nonumber\\
        \leqslant\ & \left(\dfrac{1}{\sqrt{2}}C_1\sqrt{T} -\dfrac{1}{\sqrt{2}}S^{-1} a(\omega)\right)^{-1}\sqrt{2}(C_2 e^{-C_2 T} + S^{-1}a(\omega)) \left(1 + 1\right).
    \end{align}
   The last line of (\ref{rough estimate of projection}) goes to $0$ when $S\to+\infty$ and $T\to+\infty$. The same calculation applies to $\begin{bmatrix}
        0\\
        q
    \end{bmatrix}$ for $q\in\Omega^{k-\ell+1}(M)$. 
    Since 
    \begin{align}
        \left\{J_T\begin{bmatrix}
        p\\
        0
    \end{bmatrix},\  J_T\begin{bmatrix}
        0\\
        p
    \end{bmatrix}: p\in Z(df)\right\}
    \end{align}
    is an orthonormal basis of $\mathbb{E}_T$, then $P_{ST}\circ J_T$ is injective when $S$ and $T$ are sufficiently large. 

    Now, we show that $P_{ST}\circ J_T$ is surjective. 
    Suppose that $\mathbf{w}\in\bigoplus_{k = -1}^{m+\ell-1} F_{ST}^k(f,g,\omega)$ is $L^2$ orthogonal to $P_{ST}\mathbb{E}_T$. Then, we write $\mathbf{w} = \mathbf{w}_1+\mathbf{w}_2$ with $\mathbf{w}_1\in \mathbb{E}_T$ and $\mathbf{w}_2\in \mathbb{E}_T^\perp$. Since $F_{ST}^k(f,g,\omega)$ only involves eigenvalues in $[0,1]$, we have 
    \begin{align}\label{estimate added 1}
        \|\mathbf{w}\| \geqslant \ & \|\mathbb{D}_{ST}\mathbf{w}\|   \nonumber\\
        \geqslant\ &  \|\mathbb{D}_{ST}\mathbf{w}_2\| - \|\mathbb{D}_{ST}\mathbf{w}_1\|   \nonumber\\
        \geqslant\ &  \left(\dfrac{1}{\sqrt{2}}C_1\sqrt{T} -\dfrac{1}{\sqrt{2}}S^{-1} a(\omega)\right)\|\mathbf{w}_2\| - \sqrt{2}(C_2 e^{-C_2 T} + S^{-1}a(\omega))\|\mathbf{w}_1\|  \nonumber\\
        \geqslant\ & \left(\dfrac{1}{\sqrt{2}}C_1\sqrt{T} -\dfrac{1}{\sqrt{2}}S^{-1} a(\omega)\right)\|\mathbf{w}_2\| - \sqrt{2}(C_2 e^{-C_2 T} + S^{-1}a(\omega))\|\mathbf{w}\|.
    \end{align}
    Since $\mathbf{w}$ is orthogonal to $P_{ST}\mathbf{w}_1$, we have 
    \begin{align}
        \|\mathbf{w} - P_{ST}\mathbf{w}_1\|\geqslant \|\mathbf{w}\|.
    \end{align}
    Therefore, by (\ref{rough estimate of projection}), we have a constant $\widetilde{C}_1$ such that 
    \begin{align}\label{estimate added 2}
        \|\mathbf{w}_2\| \geqslant  \|\mathbf{w} - P_{ST}\mathbf{w}_1\| - \|P_{ST}\mathbf{w}_1 - \mathbf{w}_1\| 
        \geqslant  \|\mathbf{w}\| - \widetilde{C}_1 T^{-1/2}\|\mathbf{w}_1\|.
    \end{align}
    Then, by (\ref{estimate added 1}) and (\ref{estimate added 2}), we have a constant $\widetilde{C}_2$ such that 
    \begin{align}\label{added estimate 3}
        \|\mathbf{w}\| 
        \geqslant\ & \left(\dfrac{1}{\sqrt{2}}C_1\sqrt{T} -\dfrac{1}{\sqrt{2}}S^{-1} a(\omega)\right)\left(\|\mathbf{w}\|-\widetilde{C}_1 T^{-1/2}\|\mathbf{w}_1\|\right) - \sqrt{2}(C_2 e^{-C_2 T} + S^{-1}a(\omega))\|\mathbf{w}\|   \nonumber\\
        \geqslant\ & \widetilde{C}_2\sqrt{T}\left(\|\mathbf{w}\| - \widetilde{C}_1 T^{-1/2}\|\mathbf{w}_1\|\right) - \|\mathbf{w}\|   \nonumber\\
        \geqslant\ & \widetilde{C}_2\sqrt{T}\|\mathbf{w}\| - \widetilde{C}_2\widetilde{C}_1 \|\mathbf{w}\| - \|\mathbf{w}\|.
    \end{align}
    We see that (\ref{added estimate 3}) holds only when $\mathbf{w}$ is zero. Thus, $P_{ST}\circ J_T$ is surjective.  
\end{proof}

 The isomorphism $P_{ST}\circ J_T$ is not a cochain isomorphism. The projection $P_{ST}$ brings subtle perturbations, so $P_{ST}\circ J_T$ does not guarantee a commutative diagram between cochain complexes. We will use $P_{ST}\circ J_T$ as an auxiliary tool to verify the true cochain isomorphism.

We recall the cochain map $\Phi^\omega$ given by 
(\ref{mapping cone morse and mapping cone de rham quasi-isom expression}). We perturb it using $S$ and $T$ and restrict it to $\bigoplus_{k = -1}^{m+\ell-1} F_{ST}^k(f,g,\omega)$. Thus, we define
\begin{align}
    \begin{split}
        \Phi^\omega_{ST}: F_{ST}^k(f,g,\omega)&\to C^k(f, g)\oplus C^{k-\ell+1}(f, g)\\
        \mathbf{w} &\mapsto \begin{bmatrix}
            \Phi & S^{-1}K(\omega)\\
           0  & S^{-1}\Phi
        \end{bmatrix}\begin{bmatrix}
            e^{Tf} & \\
            & e^{Tf}
        \end{bmatrix}\mathbf{w}\ .
    \end{split}
\end{align} 
\begin{proposition}
    This $\Phi^\omega_{ST}$ is a chain map for all $S>0$ and $T>0$. 
\end{proposition}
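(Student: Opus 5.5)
The map $\Phi^\omega_{ST}$ factors as a composition of maps we already know are cochain maps. Consider the diagonal rescaling
\begin{align}
    \sigma_S: C^k(f,g)\oplus C^{k-\ell+1}(f,g)\to C^k(f,g)\oplus C^{k-\ell+1}(f,g),\ \ \begin{bmatrix} a\\ b\end{bmatrix}\mapsto \begin{bmatrix} a\\ S^{-1}b\end{bmatrix}.
\end{align}
The matrix identity
\begin{align}
    \begin{bmatrix}
            \Phi & S^{-1}K(\omega)\\
           0  & S^{-1}\Phi
        \end{bmatrix} = \begin{bmatrix}
            \Phi & K(\omega)\\
           0  & \Phi
        \end{bmatrix}\begin{bmatrix}
            1 & 0\\
           0  & S^{-1}
        \end{bmatrix}
\end{align}
shows that the map $\mathbf{w}\mapsto \mathrm{diag}(e^{Tf},S^{-1}e^{Tf})\mathbf{w}$ is exactly $\varrho_{ST}$. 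Hence
\begin{align}
    \Phi^\omega_{ST} = \Phi^\omega\circ\varrho_{ST}\big|_{\bigoplus_k F^k_{ST}(f,g,\omega)}.
\end{align}
Note that $\Phi^\omega$ is applied on the left, so no rescaling $\sigma_S$ on the Thom-Smale side is needed at all. This is the first step, and it is a direct check of the matrix product.

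The second step is to chain the known intertwining relations. Proposition \ref{routinely check the deformation does not change the cohomology} gives $\varrho_{ST}\circ d^\omega_{ST} = d^\omega\circ\varrho_{ST}$ on all of $\Omega^k(M)\oplus\Omega^{k-\ell+1}(M)$. The Clausen--Tang--Tseng theorem (\ref{mapping cone morse chain map statement}) gives $\Phi^\omega\circ d^\omega = \partial^\omega\circ\Phi^\omega$. Combining the two, for every $\mathbf{w}$ we get
\begin{align}
    \Phi^\omega\varrho_{ST} d^\omega_{ST}\mathbf{w} = \Phi^\omega d^\omega\varrho_{ST}\mathbf{w} = \partial^\omega\Phi^\omega\varrho_{ST}\mathbf{w}.
\end{align}
In particular this holds for $\mathbf{w}\in F^k_{ST}(f,g,\omega)$.

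The third step handles the restriction. Since $d^\omega_{ST}$ commutes with $\mathbb{D}_{ST}^2$, it maps $F^k_{ST}(f,g,\omega)$ into $F^{k+1}_{ST}(f,g,\omega)$. Restricting the identity above to $F^k_{ST}$ therefore gives $\Phi^\omega_{ST}\circ d^\omega_{ST} = \partial^\omega\circ \Phi^\omega_{ST}$, which is the chain map property.

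There are two things to verify along the way. First, the elements of $F^k_{ST}$ are smooth, by elliptic regularity for the elliptic complex, so $\Phi$ and $K(\omega)$ are defined on them. Second, the degree bookkeeping must be consistent: $\varrho_{ST}$ preserves degree, even though its displayed codomain in the paper has a shifted index. I expect no real obstacle here. The only delicate point is that $K(\omega)$ is defined using the Hodge decomposition with respect to the undeformed $d$. Because $\Phi^\omega$ is applied after $\varrho_{ST}$ has already undone the deformation, the cited theorem is used exactly in its original setting, and no new estimate involving $S$ or $T$ is needed. The statement therefore holds for all $S>0$ and $T>0$.
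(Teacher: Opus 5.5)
Your proof is correct and is essentially the paper's argument. The paper multiplies out both matrix products entrywise, silently using $e^{Tf}(d+Tdf)=d\,e^{Tf}$ and $\Phi d=\partial\Phi$. It thereby reduces everything to the top-right entry $\partial K(\omega)+\mathcal{C}(\omega)\Phi=\Phi\omega+(-1)^{\ell-1}K(\omega)d$ of the Clausen--Tang--Tseng identity. Your factorization $\Phi^\omega_{ST}=\Phi^\omega\circ\varrho_{ST}$ packages exactly this computation in one line.

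Two small points. The auxiliary map $\sigma_S$ is never used and can be deleted. You are also right that the codomain of $\varrho_{ST}$ displayed in the paper should read $\Omega^k(M)\oplus\Omega^{k-\ell+1}(M)$.
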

\begin{proof}
    We need to verify that 
    \begin{align}\label{perturbed chain map verification process}
        \begin{bmatrix}
            \partial & \cupc(\omega)\\
            0 & (-1)^{\ell-1}\partial
        \end{bmatrix}\begin{bmatrix}
            \Phi e^{Tf} & S^{-1}K(\omega)e^{Tf}\\
           0  & S^{-1}\Phi e^{Tf}
        \end{bmatrix} = \begin{bmatrix}
            \Phi e^{Tf} & S^{-1}K(\omega)e^{Tf}\\
           0  & S^{-1}\Phi e^{Tf}
        \end{bmatrix}\begin{bmatrix}
            d+Tdf & S^{-1}\omega\\
            0 & (-1)^{\ell-1}(d+Tdf)
        \end{bmatrix}. 
    \end{align}
    The $\Phi e^{Tf}$ means multiplying a form $\eta$ by the function $e^{Tf}$, and then applying $\Phi$ to $e^{Tf}\eta$. Same for other similar notations here. The left hand side of (\ref{perturbed chain map verification process}) equals
    \begin{align}\label{lhs of checking chain map}
         \begin{bmatrix}
            \partial\Phi  & S^{-1}\partial K(\omega) + S^{-1}\cupc(\omega)\Phi \\
            0 & (-1)^{\ell-1}S^{-1}\partial\Phi 
        \end{bmatrix}\begin{bmatrix}
            e^{Tf} & \\
            & e^{Tf}
        \end{bmatrix},
    \end{align}
    and the right hand side of (\ref{perturbed chain map verification process}) equals
    \begin{align}\label{rhs of checking chain map}
        \begin{bmatrix}
            \Phi d  &  S^{-1}\Phi\omega + (-1)^{\ell-1} S^{-1} K(\omega) d \\
           0  & (-1)^{\ell-1}S^{-1}\Phi d 
        \end{bmatrix}\begin{bmatrix}
            e^{Tf} & \\
            & e^{Tf}
        \end{bmatrix}.
    \end{align}
    According to (\ref{mapping cone morse chain map statement}),  
    \begin{align}
        \partial K(\omega) + \cupc(\omega)\Phi = \Phi\omega + (-1)^{\ell-1}K(\omega) d.
    \end{align}
    Therefore, (\ref{lhs of checking chain map}) = (\ref{rhs of checking chain map}), and $\Phi_{ST}^\omega$ is a chain map. 
\end{proof}

Recall (\ref{sobolev n norm expressions}) the norm $\|\cdot\|_n$ on $\bigoplus_{k = -1}^{m+\ell-1}\left(\Omega^k(M)\oplus\Omega^{k-\ell+1}(M)\right)$. 
    Let 
    \begin{align}
    \mathbb{D} = 
    \begin{bmatrix}
        d+d^*+T\hat{c}(df) & 0\\
        0 & (-1)^{\ell-1}(d+d^*+T\hat{c}(df))
    \end{bmatrix}.
    \end{align}
For any form $\alpha\in\Omega^\bullet(M)$, let 
\begin{align}
    |\alpha| = \sqrt{g(\alpha,\alpha)}\ .
\end{align}
Then $|\alpha|$ is a smooth function on $M$.  

\begin{lemma}\label{estimate of sobolev with parameters}
    For a given $n\in\mathbb{Z}_{\geqslant 0}$, there is a constant $C_3>0$ such that when $S$ and $T$ are sufficiently large,  
    \begin{align}\label{estimate of n sobolev using garding}
       \|\mathbf{w}\|_n\leqslant C_3 T^n\left(\|(zI-\mathbb{D}_{ST})\mathbf{w}\|_{n-1}+\|\mathbf{w}\|\right)
    \end{align}
    for all $\mathbf{w}\in\bigoplus_{k = -1}^{m+\ell-1}(\Omega^k(M)\oplus\Omega^{k-\ell+1}(M))$ and all $z\in\mathbb{S}^1$.
\end{lemma}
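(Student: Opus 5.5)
We argue by induction on $n$, using the elliptic (G\aa rding) estimate for the undeformed first-order operator and treating everything else in $\mathbb{D}_{ST}$ as a zeroth-order perturbation whose size is controlled by $T$. For $n=0$ the inequality is trivial with $C_3\geqslant 1$ (the $\|\cdot\|_{-1}$ term is simply dropped). Write
\begin{align*}
\mathbb{D}_{ST} = \mathbb{D}_0 + T\mathbb{A} + S^{-1}\mathbb{B},
\end{align*}
where $\mathbb{D}_0$ is the diagonal operator with entries $d+d^*$ and $(-1)^{\ell-1}(d+d^*)$, $\mathbb{A}$ is diagonal with entries $\hat{c}(df)$ and $(-1)^{\ell-1}\hat{c}(df)$, and $\mathbb{B}$ is the off-diagonal operator with entries $\omega\wedge$ and $\omega^*\lrcorner$. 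The operator $\mathbb{D}_0$ is a first-order elliptic operator; it is just two copies of $d+d^*$, and its symbol is the one computed in the proof that (\ref{deformed version of the mapping cone de Rham cochain complex}) is elliptic. The operators $\mathbb{A}$ and $\mathbb{B}$ are zeroth order with smooth coefficients independent of $S$ and $T$. By the remark after Lemma \ref{lemma like warner's theorem 6.18 i}, $\mathbb{A}$ and $\mathbb{B}$ are therefore bounded on every $\|\cdot\|_j$, with bounds $c_j$ independent of $S,T$.

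The basic elliptic estimate for $\mathbb{D}_0$ on the closed manifold $M$ is
\begin{align*}
\|\mathbf{w}\|_n\leqslant c\left(\|\mathbb{D}_0\mathbf{w}\|_{n-1}+\|\mathbf{w}\|_{n-1}\right)
\end{align*}
for $n\geqslant 1$. We then substitute
\begin{align*}
\mathbb{D}_0\mathbf{w} = -(zI-\mathbb{D}_{ST})\mathbf{w} + z\mathbf{w} - T\mathbb{A}\mathbf{w} - S^{-1}\mathbb{B}\mathbf{w}.
\end{align*}
Since $|z|=1$, $S\geqslant 1$ and $T\geqslant 1$, this gives
\begin{align*}
\|\mathbf{w}\|_n\leqslant c'\left(\|(zI-\mathbb{D}_{ST})\mathbf{w}\|_{n-1} + T\|\mathbf{w}\|_{n-1}\right).
\end{align*}

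Now apply the induction hypothesis at level $n-1$:
\begin{align*}
\|\mathbf{w}\|_{n-1}\leqslant C T^{n-1}\left(\|(zI-\mathbb{D}_{ST})\mathbf{w}\|_{n-2}+\|\mathbf{w}\|\right).
\end{align*}
Use $\|\cdot\|_{n-2}\leqslant\|\cdot\|_{n-1}$; in the case $n=1$ the level-$0$ bound is just $\|\mathbf{w}\|$. Multiplying the induction bound by $T$ yields a factor $T^n$, and the direct term $\|(zI-\mathbb{D}_{ST})\mathbf{w}\|_{n-1}$ carries only a constant, which is at most $T^n$ once $T\geqslant 1$. Combining the two gives (\ref{estimate of n sobolev using garding}) with a constant $C_3$ depending on $n$ but not on $S$, $T$ or $z$.

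The one point requiring care is that the constants really are uniform in $S$ and $T$. This holds because the $T$-dependence enters only linearly through the fixed zeroth-order operator $\mathbb{A}$, while $S^{-1}\mathbb{B}$ is bounded uniformly for $S\geqslant 1$. The Sobolev bounds for the Clifford action $\hat{c}(df)$ and for $\omega\wedge$, $\omega^*\lrcorner$ in $\|\cdot\|_{n-1}$ also require commuting $\nabla^j$ past these multiplication operators. This only produces derivatives of $df$ and $\omega$, which are bounded on the compact manifold $M$, so it causes no growth in $T$ beyond the power $T^n$ already accounted for.
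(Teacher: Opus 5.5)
Your proposal is correct and follows essentially the same route as the paper: split $\mathbb{D}_{ST}$ into two copies of $d+d^*$ plus the zeroth-order pieces $T\hat{c}(df)$ and $S^{-1}\omega$, $S^{-1}\omega^*$, then apply the elliptic estimate for $d+d^*$ to get $\|\mathbf{w}\|_n\leqslant CT\left(\|(zI-\mathbb{D}_{ST})\mathbf{w}\|_{n-1}+\|\mathbf{w}\|_{n-1}+\|\mathbf{w}\|\right)$ and iterate down in $n$. Your version only makes the induction bookkeeping, and the uniformity of the constants in $S$, $T$ and $z$, more explicit than the paper does.
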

\begin{proof}
    We write $\mathbf{w} = \begin{bmatrix}
        \alpha\\
        \beta
    \end{bmatrix}$ and then get
    \begin{align}
        \mathbb{D}_{ST}\mathbf{w} =\ & \begin{bmatrix}
            D_T\alpha + S^{-1}\omega\wedge\beta\\
            (-1)^{\ell-1}D_T\beta + S^{-1}\omega^*\lrcorner\alpha
        \end{bmatrix}, \\
        \mathbb{D}\mathbf{w} =\ & \begin{bmatrix}
        (d+d^*)\alpha\\
        (-1)^{\ell-1}(d+d^*)\beta
    \end{bmatrix}. 
    \end{align}
    Since $d+d^*$ is a first order differential operator on $\Omega^\bullet(M)$, by \cite[6.29]{warner2013foundations}, we have a 
    constant $C_4>0$ such that  
    \begin{align}
       \|\alpha\|_n\leqslant\ & C_4(\|(d+d^*)\alpha\|_{n-1}+\|\alpha\|),\\
       \|\beta\|_n\leqslant\ &  C_4(\|(-1)^{\ell-1}(d+d^*)\beta\|_{n-1}+\|\beta\|).
    \end{align}
    Therefore, we have 
    \begin{align}
        \|\mathbf{w}\|_n\leqslant \sqrt{2}C_4\left(\|\mathbb{D}\mathbf{w}\|_{n-1}+\|\mathbf{w}\|\right).
    \end{align}
    Since $zI-\mathbb{D}_{ST}$ separates into
    \begin{align}
        zI-\mathbb{D}_{ST} = -\mathbb{D} + zI - \begin{bmatrix}
            T\hat{c}(df) & \\
            & (-1)^{\ell-1} T\hat{c}(df) 
        \end{bmatrix} - \begin{bmatrix}
                & S^{-1}\omega\\
                S^{-1}\omega^* & 
            \end{bmatrix},
    \end{align}
    we find positive constants $C_4'$, $C_4''$, $C_4'''$ such that 
    \begin{align}\label{repeat it}
        & \|\mathbf{w}\|_n   \nonumber\\
        \leqslant\ & \sqrt{2}C_4\left(\|zI-\mathbb{D}_{ST}\mathbf{w}\|_{n-1} + |z|\|\mathbf{w}\|_{n-1} + C_4'T\|\mathbf{w}\|_{n-1}+ C_4'' S^{-1}\|\mathbf{w}\|_{n-1} + \|\mathbf{w}\| \right)   \nonumber\\
        \leqslant\ & C_4'''T\left(\|zI-\mathbb{D}_{ST}\mathbf{w}\|_{n-1}+\|\mathbf{w}\|_{n-1}+\|\mathbf{w}\|\right).
    \end{align}
    We repeat the procedure inductively on terms $\|\mathbf{w}\|_{n-1}$, $\|\mathbf{w}\|_{n-2}$ $\cdots$ in (\ref{repeat it}). Then, we obtain (\ref{estimate of n sobolev using garding}) when we reach $n = 0$. 
\end{proof}

\begin{remark}
    \normalfont Up to now, we do not have a particular reason to require $S$  to be ``exponentially larger'' than $T$ as in Theorem \ref{main theorem}. We will see this requirement in the next estimate. 
\end{remark}

\begin{proposition}\label{refinement of the estimates of proj vs non-proj}
For each $p\in Z^k(df)$, we write 
\begin{align}\label{estimate for the key to the final isomorphism}
   P_{ST}\circ J_T\begin{bmatrix}
            p\\
            0
        \end{bmatrix} - J_T\begin{bmatrix}
            p\\
            0
        \end{bmatrix} = \begin{bmatrix}
            \alpha\\
            \beta
        \end{bmatrix}.
\end{align}
Then, for each $n\in\mathbb{Z}_{\geqslant 0}$, we have constants $\sigma_n>0$ and $\tau_n>0$ such that 
\begin{align}\label{sobolev n norm estimate with respect to n}
    \|\alpha\|_n < \sigma_ne^{-\sigma_nT} \ \ \text{and }\ \ \|\beta\|_n< \sigma_ne^{-\sigma_nT}
\end{align}
when $S$ and $T$ satisfies $S>e^{T}>e^{\tau_n}$. Furthermore, we have constants $C_5>0$, $C_6>0$, and $T_3>0$ such that 
\begin{align}\label{corollary by warner's book}
    |\alpha| < C_5 e^{-C_6 T} \ \ \text{,}\ \ |\beta|< C_5 e^{-C_6T}
\end{align}
when $S>e^{T}>e^{T_3}$.
\end{proposition}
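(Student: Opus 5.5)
We will use the resolvent representation of $P_{ST}$. Write $\mathbf{u} = J_T\begin{bmatrix} p\\ 0\end{bmatrix} = \|\xi_p\|^{-1}\begin{bmatrix}\xi_p\\ 0\end{bmatrix}$. For $z\in\mathbb{S}^1$ we have $(zI-\mathbb{D}_{ST})^{-1}\mathbf{u} - z^{-1}\mathbf{u} = z^{-1}(zI-\mathbb{D}_{ST})^{-1}\mathbb{D}_{ST}\mathbf{u}$. Since $\frac{1}{2\pi\sqrt{-1}}\int_{\mathbb{S}^1} z^{-1}\,dz = 1$, this gives
\begin{align*}
P_{ST}\mathbf{u}-\mathbf{u} = \dfrac{1}{2\pi\sqrt{-1}}\int_{z\in\mathbb{S}^1} z^{-1}(zI-\mathbb{D}_{ST})^{-1}\mathbb{D}_{ST}\mathbf{u}\ dz .
\end{align*}
So it suffices to bound $\|(zI-\mathbb{D}_{ST})^{-1}\mathbb{D}_{ST}\mathbf{u}\|_n$ uniformly in $z\in\mathbb{S}^1$, and then integrate.

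The source term is computed directly:
\begin{align*}
\mathbb{D}_{ST}\mathbf{u} = \|\xi_p\|^{-1}\begin{bmatrix} c(d\gamma)\rho\\ S^{-1}\omega^*\lrcorner\xi_p\end{bmatrix}.
\end{align*}
For the first component, $c(d\gamma)\rho$ is supported in the annulus $\varepsilon/4\le |x|^2\le\varepsilon/2$, where the Gaussian is $O(e^{-cT})$. All of its derivatives carry at most polynomial factors in $T$, so $\|c(d\gamma)\rho\|_{n}\le C e^{-cT}$. For the second component, $\|\omega^*\lrcorner\xi_p\|_n\le C T^{n}\|\xi_p\|$, hence $\|\xi_p\|^{-1}\|S^{-1}\omega^*\lrcorner\xi_p\|_n\le C T^n S^{-1}$. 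This is exactly where the hypothesis $S>e^{T}$ enters: it turns the $S^{-1}$ term into $CT^ne^{-T}$. Combined with the factor $\|\xi_p\|^{-1}\le C T^{m/4}$, we get $\|\mathbb{D}_{ST}\mathbf{u}\|_{n}\le C'_n e^{-c_nT}$ after absorbing polynomial factors. We should keep separate the part of $\mathbb{D}_{ST}\mathbf{u}$ coming from $\omega$: without $S$ large it is only $O(1)$, and that is why the previous rough estimate gave only $T^{-1/2}$.

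Next we estimate the resolvent. By Proposition \ref{eigenvalue behavior makes some operators invertible}, for $S,T$ large the spectrum of $\mathbb{D}_{ST}$ lies in $[-\epsilon,\epsilon]\cup\{|\lambda|\ge c\sqrt{T}\}$ with $\epsilon<1/2$. Hence $\|(zI-\mathbb{D}_{ST})^{-1}\|_{L^2\to L^2}\le 2$ on $\mathbb{S}^1$, and $\mathbf{v}_z := (zI-\mathbb{D}_{ST})^{-1}\mathbb{D}_{ST}\mathbf{u}$ satisfies $\|\mathbf{v}_z\|\le 2\|\mathbb{D}_{ST}\mathbf{u}\|$. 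For higher norms we apply Lemma \ref{estimate of sobolev with parameters} with $\mathbf{w}=\mathbf{v}_z$:
\begin{align*}
\|\mathbf{v}_z\|_n\le C_3T^n\left(\|\mathbb{D}_{ST}\mathbf{u}\|_{n-1}+\|\mathbf{v}_z\|\right)\le C''_nT^n e^{-c_nT}.
\end{align*}
Absorbing $T^n$ into the exponential yields constants $\sigma_n,\tau_n$ such that \eqref{sobolev n norm estimate with respect to n} holds for both components $\alpha$ and $\beta$ when $S>e^T>e^{\tau_n}$.

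For the pointwise estimate \eqref{corollary by warner's book}, we fix $n>m/2$ and use the Sobolev embedding $\sup_M|\alpha|\le C\|\alpha\|_n$ (cf. \cite[6.22]{warner2013foundations}), and likewise for $\beta$. We then take $T_3=\tau_n$, $C_6=\sigma_n$, and $C_5$ equal to $\sigma_n$ times the embedding constant. The main obstacle is bookkeeping the $T$-dependence of the Sobolev norms of $\mathbb{D}_{ST}\mathbf{u}$, in particular the $\omega^*$ term, whose derivatives of $\xi_p$ grow like powers of $T$. It is also necessary to check that the source estimate is taken in $\|\cdot\|_{n-1}$, consistently with Lemma \ref{estimate of sobolev with parameters}. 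I expect that polynomial losses are always dominated, since $S^{-1}<e^{-T}$ and the annulus term is exponentially small.
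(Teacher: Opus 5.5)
Your proposal is correct and follows essentially the same route as the paper. The paper also writes $P_{ST}\mathbf{u}-\mathbf{u}$ via the resolvent identity, splits $\mathbb{D}_{ST}\mathbf{u}$ into the exponentially small annulus term $c(d\gamma)\rho$ and the $S^{-1}\omega^*\lrcorner\xi_p$ term (which is where $S>e^{T}$ enters), bounds the resolvent uniformly in $L^2$ using Proposition \ref{eigenvalue behavior makes some operators invertible}, upgrades to $\|\cdot\|_n$ with Lemma \ref{estimate of sobolev with parameters}, and finishes with Sobolev embedding \cite[Corollary 6.22(b)]{warner2013foundations}. Your handling of the normalization factor $\|\xi_p\|^{-1}$ against the $\omega^*$ term, through the bound $\|\omega^*\lrcorner\xi_p\|_n\leqslant CT^n\|\xi_p\|$, is if anything slightly cleaner than the paper's bookkeeping.
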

\begin{proof}
    For any $p\in Z^k(df)$, 
    \begin{align}
        & P_{ST}\circ J_T\begin{bmatrix}
            p\\
            0
        \end{bmatrix} - J_T\begin{bmatrix}
            p\\
            0
        \end{bmatrix}   \nonumber\\
        =\ & \dfrac{1}{2\pi\sqrt{-1}}\int_{z\in\mathbb{S}^1}\left( (z - \mathbb{D}_{ST})^{-1}\|\xi_p\|^{-1}\begin{bmatrix}
        \xi_p\\
        0
    \end{bmatrix} - z^{-1}\|\xi_p\|^{-1}\begin{bmatrix}
        \xi_p\\
        0
    \end{bmatrix} \right)dz   \nonumber\\
    =\ & \dfrac{1}{2\pi\sqrt{-1}}\int_{z\in\mathbb{S}^1}(z - \mathbb{D}_{ST})^{-1} z^{-1} \mathbb{D}_{ST} \left(\|\xi_p\|^{-1}\begin{bmatrix}
        \xi_p\\
        0
    \end{bmatrix}\right) dz. 
    \end{align}
   We find that
   \begin{align}
       & \mathbb{D}_{ST}\begin{bmatrix}
           \xi_p\\
           0
       \end{bmatrix}
       = \begin{bmatrix}
           c(d\gamma)\rho\\
           S^{-1}\omega^*\lrcorner(\gamma\rho)
       \end{bmatrix}.
   \end{align}
   By the Gaussian integral, we see that when $T$ is sufficiently large, 
   \begin{align}\label{gaussian integral}
      \dfrac{9}{10}\left(\dfrac{\pi}{T}\right)^{m/4} \leqslant \|\xi_p\|\leqslant \left(\dfrac{\pi}{T}\right)^{m/4}. 
   \end{align}
   As $\gamma$ is supported inside $U(\varepsilon/2)$ and always equal to $1$ on $U(\varepsilon/4)$, we find a constant $\sigma'_n>0$ such that 
   \begin{align}\label{first part sobolev upper bound}
       \|c(d\gamma)\rho\|_n \leqslant\ & \sigma'_n e^{-\sigma'_n T}
   \end{align}
   when $T$ is sufficiently large. For $S^{-1}\omega^*\lrcorner(\gamma\rho)$, however, we do not have $d\gamma = 0$ inside $U(\varepsilon/4)$. This is where we need the parameter $S$. Notice that
   \begin{align}
        \left|\dfrac{\partial^n}{\partial x_{j_1}\cdots \partial x_{j_n}}\exp\left(-\dfrac{T}{2}(x_1^2+\cdots+x_m^2)\right)\right| 
       \leqslant  \sigma''_n T^n
   \end{align}
   on the chart $U$. Thus, when $T$ is sufficiently large, and $S$ satisfies $S>e^{T}$, then 
   \begin{align}\label{second part sobolev upper bound}
       \|S^{-1}\omega^*\lrcorner(\gamma\rho)\|_n\leqslant \sigma_n''' T^n e^{-T}.
   \end{align}
   The numbers $\sigma_n'$, $\sigma_n''$, and $\sigma_n'''$ are constants relying on $n$. 
   By Proposition \ref{eigenvalue behavior makes some operators invertible}, we have $C_7$ and $C_7'$ such that 
   \begin{align}\label{something easy to omit, the zero th sobolev}
       \left\|(zI-\mathbb{D}_{ST})^{-1} z^{-1}\mathbb{D}_{ST} \left(\|\xi_p\|^{-1}\begin{bmatrix}
        \xi_p\\
        0
    \end{bmatrix}\right)\right\|
    \leqslant\ & C_7\left\|\mathbb{D}_{ST} \left(\|\xi_p\|^{-1}\begin{bmatrix}
        \xi_p\\
        0
    \end{bmatrix}\right)\right\|   \nonumber\\
    =\ &  C_7\|\xi_p\|^{-1}\left\|\begin{bmatrix}
           c(d\gamma)\rho\\
           S^{-1}\omega^*\lrcorner(\gamma\rho)
       \end{bmatrix}\right\|   \nonumber\\
       \leqslant\ & C_7' e^{-C_7' T} 
   \end{align}
   when $T$ is sufficiently large and $S^{-1}\omega^*\lrcorner(\gamma\rho)$. 
   By Lemma \ref{estimate of sobolev with parameters}, (\ref{first part sobolev upper bound}), (\ref{second part sobolev upper bound}), and (\ref{something easy to omit, the zero th sobolev}), we have $\tilde{\sigma}_n$ such that 
   \begin{align}
       & \left\|(zI-\mathbb{D}_{ST})^{-1} z^{-1}\mathbb{D}_{ST} \left(\|\xi_p\|^{-1}\begin{bmatrix}
        \xi_p\\
        0
    \end{bmatrix}\right)\right\|_n    \nonumber\\
    \leqslant\ &  C_3 T^n\left\{\left\|z^{-1}\mathbb{D}_{ST} \left(\|\xi_p\|^{-1}\begin{bmatrix}
        \xi_p\\
        0
    \end{bmatrix}\right)\right\|_{n-1}+\left\|(zI-\mathbb{D}_{ST})^{-1}z^{-1}\mathbb{D}_{ST} \left(\|\xi_p\|^{-1}\begin{bmatrix}
        \xi_p\\
        0
    \end{bmatrix}\right)\right\|\right\}   \nonumber\\
    \leqslant\ & \tilde{\sigma}_n e^{-\tilde{\sigma}_n T}. 
    \end{align}
We immediately obtain (\ref{sobolev n norm estimate with respect to n}). By \cite[Corollary 6.22(b)]{warner2013foundations}, we obtain (\ref{corollary by warner's book}). 
\end{proof}

\begin{remark}\label{remark for the other side that we do not repeat}
\normalfont
    The same calculations and estimates can be applied to $\begin{bmatrix}
    0\\
    q
\end{bmatrix}$ for any $q\in Z^{k-\ell+1}(df)$. In this case, we also obtain the same estimates (constants may be different) as (\ref{sobolev n norm estimate with respect to n}) and (\ref{corollary by warner's book}).
\end{remark}

The following technical lemma is for approaching the map $K(\omega)$. Let $D = d+d^*$. Then $D^2$ is the Hodge Laplacian. It has an inverse $D^{-2}$ on the orthogonal complement of the space of harmonic forms on $M$. 
\begin{lemma}\label{the second to last technical lemma}
    Suppose $\beta = \beta(S, T)$ is a smooth homogeneous form on $M$ depending on the parameters $S$ and $T$. We write $\beta$ uniquely as 
    \begin{align}
        \beta = \beta_0 + D^2\beta_1
    \end{align}
    such that $D^2\beta_0 = 0$, and $\beta_1$ is orthogonal to $\beta_0$ under the $L^2$ norm. Assume that for each $n\geqslant 0$, there is a constant $\sigma_n>0$ such that 
    \begin{align}
        \|\beta\|_n <\sigma_n e^{-\sigma_n T}
    \end{align}
    when $T$ is sufficiently large and $S>e^{T}$. Then, there are constants $C_8>0$ and $C_8'>0$ such that 
    \begin{align}\label{estimate the length of harmonic and orthogonal d*}
        |\beta_0|< C_8e^{-C_8T}\ \ \text{and}\ \ |d^*\beta_1|< C_8'e^{-C_8'T}
    \end{align}
     when $T$ is sufficiently large and $S>e^{T}$. 
\end{lemma}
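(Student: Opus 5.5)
The plan is to get Sobolev bounds on $\beta_0$ and $d^*\beta_1$ from those on $\beta$, using only that the Hodge decomposition is bounded in every Sobolev norm with constants independent of $S$ and $T$. Pointwise bounds then follow from the Sobolev embedding \cite[Corollary 6.22(b)]{warner2013foundations}, exactly as at the end of Proposition \ref{refinement of the estimates of proj vs non-proj}. The key point is that $D^2$ is the undeformed Hodge Laplacian of $g$. Hence all the elliptic constants below depend only on $(M,g)$ and the degree, not on $S$ or $T$.

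\textbf{Harmonic part.} Let $H$ denote the $L^2$-orthogonal projection onto the finite-dimensional space $\mathcal{H}$ of harmonic forms of the relevant degree. Since $\beta_0 = H\beta$, we have $\|\beta_0\|\leqslant\|\beta\|<\sigma_0e^{-\sigma_0T}$. On the finite-dimensional space $\mathcal{H}$, consisting of smooth forms, all norms are equivalent, in particular the sup norm and $\|\cdot\|$. Therefore $|\beta_0|\leqslant C\|\beta_0\|$ for a constant $C$ depending only on $g$, and this gives the first bound in (\ref{estimate the length of harmonic and orthogonal d*}) with a suitable $C_8$.

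\textbf{The $d^*\beta_1$ part.} I would first normalize $\beta_1$ so that it is orthogonal to $\mathcal{H}$. The statement says only that $\beta_1\perp\beta_0$, but $\beta_1$ is determined up to harmonic forms, and $d^*\beta_1$ is unaffected by this choice. Then $\beta_1=D^{-2}(\beta-\beta_0)$. Elliptic regularity for $D^2$ on $\mathcal{H}^\perp$, as in \cite[6.29]{warner2013foundations} or the Green operator estimates, gives
\begin{align}
\|\beta_1\|_{n+2}\leqslant C_n'\left(\|D^2\beta_1\|_n+\|\beta_1\|\right),
\end{align}
and the spectral gap of $D^2$ on $\mathcal{H}^\perp$ gives $\|\beta_1\|\leqslant\lambda_1^{-1}\|\beta-\beta_0\|$. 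Using the hypothesis together with the bound on $\beta_0$ (which also holds in $\|\cdot\|_n$, again by finite-dimensionality), we get $\|\beta_1\|_{n+2}\leqslant C_n''e^{-\sigma_n'T}$. Since $d^*$ is a first-order operator, it follows that $\|d^*\beta_1\|_{n+1}\leqslant C e^{-\sigma_n'T}$.

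Choosing $n$ larger than $m/2$ and applying the Sobolev embedding \cite[Corollary 6.22(b)]{warner2013foundations} yields $|d^*\beta_1|<C_8'e^{-C_8'T}$, after absorbing constants into the exponential for large $T$.

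\textbf{Main obstacle.} The only delicate point is the uniformity of the constants: no hidden $S$ or $T$ dependence may enter. The estimates above involve only $g$ through $D^2$, and the hypothesis already holds in the regime $S>e^T$ with $T$ large, so the conclusion holds in the same regime. The remaining care is the harmless normalization of $\beta_1$ described above.
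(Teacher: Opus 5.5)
Your proposal is correct and follows essentially the paper's argument. You bound $\beta_0$ through the $L^2$ projection onto the finite-dimensional harmonic space, bound $d^*\beta_1$ by the elliptic estimate for $D^2$ combined with the spectral-gap bound $\lambda^{-1}\|\beta-\beta_0\|$, and then pass to pointwise bounds via \cite[Corollary 6.22(b)]{warner2013foundations}. Your explicit normalization $\beta_1\perp\mathcal{H}$, which is harmless because $d^*$ kills harmonic forms, makes precise what the paper uses implicitly when it writes $\beta_1 = D^{-2}(\beta-\beta_0)$.
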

\begin{proof}
    Let $n$ be a sufficiently large odd number. Let 
    \begin{align}
        \alpha_1, \cdots, \alpha_s
    \end{align}
    be an orthonormal basis (with respect to the $L^2$ norm) of the kernel of $D^2$ on $\Omega^\bullet(M)$. 
    Let $\langle\cdot,\cdot\rangle$ be the inner product associated with the $L^2$ norm. Then, 
    \begin{align}
        \|\beta_0\|_{n-1}\leqslant \ & \sum_{i = 1}^s |\langle\beta,\alpha_i\rangle|\|\alpha_i\|_{n-1}   \nonumber\\
        \leqslant\ &  \sum_{i = 1}^s \|\beta\|\cdot\|\alpha_i\|_{n-1}   \nonumber\\
        & \text{let $\text{vol}_g(M)=$ volume of $M$ under $g$}\Rightarrow   \nonumber\\
        \leqslant\ &  \sum_{i = 1}^s \sigma_0 e^{-\sigma_0 T}\sqrt{\text{vol}_g(M)}\|\alpha_i\|_{n-1}.
    \end{align}
    Since $\|\alpha_1\|_{n-1}, \cdots, \|\alpha_s\|_{n-1}$ are constants depending only on $n$, we get the first part of (\ref{estimate the length of harmonic and orthogonal d*}) by \cite[Corollary 6.22(b)]{warner2013foundations}. 

    Next, by \cite[Theorem 6.18(h)]{warner2013foundations}, we have $\sigma>0$ (depending on $n$) such that
    (To reduce the indices of constants, we absorb constants into one $\sigma$ when possible)
    \begin{align}
      \|d^*\beta_1\|_n   \leqslant\ & \sigma\|\beta_1\|_{n+1}   \nonumber\\
        \leqslant\ & \sigma (\|D^2\beta_1\|_{n-1} + \|\beta_1\|_{n-1})   \nonumber\\
        &\text{inductively on $n$}\Rightarrow    \nonumber\\
        \leqslant\ & \sigma(\|D^2\beta_1\|_{n-1} + \|\beta_1\|)    \nonumber\\
        =\ & \sigma(\|\beta-\beta_0\|_{n-1} + \|D^{-2}(\beta-\beta_0)\|)   \nonumber\\
        &\text{let $\lambda$ be the minimal nonzero eigenvalue of $D^2$ on $\Omega^\bullet(M)$}\Rightarrow    \nonumber\\
        \leqslant\ & \sigma(\|\beta-\beta_0\|_{n-1} + \lambda^{-1}\|\beta-\beta_0\|)     \nonumber\\
        \leqslant\ & \sigma e^{-\sigma T} + \lambda^{-1} \sigma e^{-\sigma T}.
    \end{align}
    Thus, use \cite[Corollary 6.22(b)]{warner2013foundations} again, we obtain
    the second half of (\ref{estimate the length of harmonic and orthogonal d*}).  
\end{proof}
Finally, we prove the cochain isomorphism for Theorem \ref{main theorem}. 
\begin{proposition}\label{final thing to check}
    There are constants $C_0>1$ and $T_0>0$ such that the map $\Phi_{ST}^\omega\circ P_{ST}\circ J_T$ is an isomorphism for all $S$ and $T$ satisfying $S>e^{C_0T}>e^{C_0T_0}$.
\end{proposition}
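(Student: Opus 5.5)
Both sides are finite dimensional of the same dimension. By Proposition \ref{a preliminary prerequisite for the final isomorphism but this is just the very rough one}, $P_{ST}\circ J_T$ is a linear isomorphism from $\bigoplus_k (C^k(f,g)\oplus C^{k-\ell+1}(f,g))$ onto $\bigoplus_k F^k_{ST}(f,g,\omega)$. It therefore suffices to show that the composite
\[
\Psi_{ST}:=\Phi^\omega_{ST}\circ P_{ST}\circ J_T,
\]
an endomorphism of $\bigoplus_k\left(C^k(f,g)\oplus C^{k-\ell+1}(f,g)\right)$, is invertible. I will compare $\Psi_{ST}$ with an explicit invertible map: a block upper triangular matrix whose diagonal blocks are diagonal in the basis of critical points, with nonzero diagonal entries. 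Using the orthonormal basis $\{p\}$, I will show that after rescaling by a diagonal matrix, $\Psi_{ST}$ is $O(e^{-cT})$-close to such a matrix.

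\textbf{Leading term.} Take $[p,0]^T$ with $p\in Z^k(df)$. Then $J_T[p,0]^T=\|\xi_p\|^{-1}[\xi_p,0]^T$. Applying $\Phi^\omega_{ST}$ to the unprojected vector gives
\[
\|\xi_p\|^{-1}\,\Phi(e^{Tf}\xi_p).
\]
Since $\xi_p$ is supported in $U(p)$ and $\overline{\mathcal U(q)}\cap U(p)$ is, for $q\neq p$ of index $k$, empty (by \ref{perturbation 2} and Morse--Smale, after shrinking $\varepsilon$), the only contribution is
\[
\int_{\mathcal U(p)\cap U}e^{Tf}\gamma\rho = e^{Tf(p)}\left(\frac{2\pi}{T}\right)^{k/2}(1+O(e^{-cT})),
\]
which is nonzero with the orientation chosen consistently. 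Similarly, for $[0,q]^T$ the lower component gives $S^{-1}e^{Tf(q)}\|\xi_q\|^{-1}(2\pi/T)^{(k-\ell+1)/2}q$ up to relative error $O(e^{-cT})$. The upper component, $S^{-1}K(\omega)(e^{Tf}\xi_q)$, lies off the diagonal blocks, so it does not affect invertibility of the triangular leading part. Let $\Lambda_{ST}$ be the diagonal matrix with these leading entries; $\Lambda_{ST}$ is invertible. The claim is then $\Lambda_{ST}^{-1}\Psi_{ST}=I+N+E$, where $N$ is strictly upper triangular (the $K(\omega)$ block) and $\|E\|\to0$. Such a matrix is invertible for large $T$.

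\textbf{Error terms.} Write $P_{ST}J_T[p,0]^T-J_T[p,0]^T=[\alpha,\beta]^T$ as in Proposition \ref{refinement of the estimates of proj vs non-proj}; by Remark \ref{remark for the other side that we do not repeat}, the same applies to $[0,q]^T$. The pointwise bounds (\ref{corollary by warner's book}) give $|\alpha|,|\beta|<C_5e^{-C_6T}$. Hence
\[
|\Phi(e^{Tf}\alpha)|\lesssim e^{T\max f}e^{-C_6T}.
\]
Dividing by the diagonal entry $\sim e^{Tf(p)}T^{m/4-k/2}$ loses a factor $e^{T(\max f-f(p))}$. This is the main obstacle. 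The $C_6$ from the crude estimate need not beat $\max f-\min f$, and the pointwise bound is not localized.

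My first attempt is to use the freedom in $S$ and the constant $C_0$. Rerun Proposition \ref{refinement of the estimates of proj vs non-proj} with $S>e^{C_0T}$. The $S^{-1}\omega^*\lrcorner(\gamma\rho)$ contribution then becomes $e^{-C_0T}$, which beats $e^{T(\max f-\min f)}$ once $C_0>\max f-\min f+1$. The $c(d\gamma)\rho$ contribution is $e^{-T\varepsilon/32}$-type, which might not beat the growth of $e^{Tf}$. So I would additionally exploit the following: the $c(d\gamma)\rho$ part is exactly as in the classical Witten complex, where the projection error is controlled as in \cite[Chapter 6]{wittendeformationweipingzhang}. There, one uses the identity $\Phi e^{Tf}P_T=$ (classical analysis) together with the fact that $e^{Tf}\alpha$ integrated over $\overline{\mathcal U(q)}$ is controlled by Agmon/Helffer--Sjöstrand decay on $\mathcal U(q)$, where $f\le f(q)$.

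Concretely, I plan to decompose $P_{ST}-P_T\oplus P_T$ via the resolvent identity. The difference is
\[
(z-\mathbb D_{ST})^{-1}\,S^{-1}\begin{bmatrix}0&\omega\\ \omega^*&0\end{bmatrix}(z-\mathbb D)^{-1},
\]
which is $O(S^{-1}T^{n})$ in every Sobolev norm by Lemma \ref{estimate of sobolev with parameters}. With $S>e^{C_0T}$, this is swamped, and the classical result \cite[Theorem 6.9]{wittendeformationweipingzhang} (that $e^{Tf}\Phi P_TJ_T$ is diagonal up to small relative error) handles the rest. The $K(\omega)$ entries are bounded via Lemma \ref{the second to last technical lemma} by $S^{-1}e^{T\max f}\cdot(\text{poly }T)$, which is small relative to the diagonal once $C_0$ is large. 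Finally, choose $T_0$ so that all "$T$ sufficiently large" requirements hold; the Neumann-series argument then gives invertibility.
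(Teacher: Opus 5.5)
Your argument works, but it takes a detour around an obstacle that is not really there.

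\textbf{How the paper handles the error terms.} The paper keeps the full error $[\alpha,\beta]^T=P_{ST}J_T[p,0]^T-J_T[p,0]^T$ and uses only the pointwise bound $|\alpha|,|\beta|<C_5e^{-C_6T}$ from Proposition \ref{refinement of the estimates of proj vs non-proj}. The $p'$-coefficient of $\Phi(e^{Tf}\alpha)$ is $\int_{\overline{\mathcal{U}(p')}}e^{Tf}\alpha$. Since $f\leqslant f(p')$ on $\overline{\mathcal{U}(p')}$, this coefficient is at most $e^{Tf(p')}Ce^{-cT}$. The normalization $\mathcal{F}_T\mathcal{N}_T\mathcal{L}_S$ (your $\Lambda_{ST}$) acts on the left, so row $p'$ is divided by $e^{Tf(p')}$, and nothing is lost.

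Your factor $e^{T(\max f-f(p))}$ came from two slips: comparing with the column index $p$ instead of the row index $p'$, and replacing $e^{Tf}$ by $e^{T\max f}$. This monotonicity of $f$ on unstable manifolds, together with a pointwise bound, is also what underlies the classical estimate; Agmon-type decay is not involved. The extra $2y_0$ in $C_0=1+2y_0$ is needed only for the terms $S^{-1}K(\omega)e^{Tf}\beta$ and $S^{-1}K(\omega)(\|\xi_q\|^{-1}e^{Tf}\xi_q)$. There the Hodge decomposition inside $K(\omega)$ destroys the localization, and one pays factors of $e^{Ty_0}$.

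\textbf{Your route.} Your final argument is still a valid alternative. The resolvent identity gives $P_{ST}-P_T\oplus P_T=O(S^{-1}\mathrm{poly}(T))$ in every Sobolev norm. This uses Lemma \ref{estimate of sobolev with parameters} for $\mathbb{D}_{ST}$, its $\omega$-free analogue for $\mathbb{D}$, and the fact that both spectra stay off $\mathbb{S}^1$. The crude loss $e^{T(\max f-\min f)}$ is then absorbed by $S>e^{C_0T}$. The remaining part $(P_T\oplus P_T)J_T$ is handled in both diagonal blocks by \cite[Theorem 6.9]{wittendeformationweipingzhang}, with the row scaling removing the $S^{-1}$ on the second block.

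What this buys is a clean split: all the $T$-analysis is quoted from the classical case, and the $\omega$-coupling becomes pure $S^{-1}$ bookkeeping. You never need pointwise control of the full $P_{ST}J_T-J_T$. The paper's route is shorter and self-contained.

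\textbf{Two details to fix.}
\begin{enumerate}
\item For the term $S^{-1}K(\omega)e^{Tf}P_T(\|\xi_q\|^{-1}\xi_q)$ you need a linear version of Lemma \ref{the second to last technical lemma}. The bounds should be proportional to Sobolev norms of the input, which here is of size $e^{T\max f}\mathrm{poly}(T)$, not exponentially small as the lemma assumes.
\item The diagonal constant is $(\pi/T)^{k/2}$, not $(2\pi/T)^{k/2}$, because $e^{Tf}\rho=e^{Tf(p)}e^{-T|x|^2}dx_1\wedge\cdots\wedge dx_k$ on the unstable disc.
\end{enumerate}
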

\begin{proof}
    For $p\in Z^k(df)$, we find 
    \begin{align}
        & \Phi_{ST}^\omega\circ P_{ST}\circ J_T \begin{bmatrix}
            p\\
            0
        \end{bmatrix}   \nonumber\\
        =\ & \Phi_{ST}^\omega\circ J_T \begin{bmatrix}
            p\\
            0
        \end{bmatrix} + \Phi_{ST}^\omega\begin{bmatrix}
            \alpha\\
            \beta
        \end{bmatrix}   \nonumber\\
        =\ & \|\xi_p\|^{-1}\Phi_{ST}^\omega \begin{bmatrix}
            \xi_p\\
            0
        \end{bmatrix} + \Phi_{ST}^\omega\begin{bmatrix}
            \alpha\\
            \beta
        \end{bmatrix}   \nonumber\\
        =\ & \|\xi_p\|^{-1} \begin{bmatrix}
            \Phi e^{Tf}\xi_p\\
            0
        \end{bmatrix} + \begin{bmatrix}
            \Phi e^{Tf}\alpha + S^{-1}K(\omega)e^{Tf}\beta\\
            S^{-1}\Phi e^{Tf} \beta
        \end{bmatrix}.
    \end{align}
    Like in \cite[Definition 6.8]{wittendeformationweipingzhang}, we let $\mathcal{F}$, $\mathcal{N}_T$, and $\mathcal{L}_S$ be the isomorphisms
    \begin{align}
    \begin{split}
        \mathcal{F}_T: C^k(f,g)\oplus C^{k-\ell+1}(f,g) &\to C^k(f,g)\oplus C^{k-\ell+1}(f,g)\\
      \text{for}\ p\in Z^k(df)\ \text{and\ } q\in Z^{k-\ell+1}(df),  \begin{bmatrix}
           p\\
           q
       \end{bmatrix} &\mapsto \begin{bmatrix}
           e^{Tf(p)}\cdot p\\
           e^{Tf(q)}\cdot q
       \end{bmatrix},
    \end{split}
    \end{align}
    \begin{align}
    \begin{split}
        \mathcal{N}_T: C^k(f,g)\oplus C^{k-\ell+1}(f,g) &\to C^k(f,g)\oplus C^{k-\ell+1}(f,g)\\
        \begin{bmatrix}
            a\\
            b
        \end{bmatrix} &\mapsto
        \begin{bmatrix}
            \left(\dfrac{\pi}{T}\right)^{\frac{k}{2}-\frac{m}{4}}\cdot a\\
           \left(\dfrac{\pi}{T}\right)^{\frac{k-\ell+1}{2}-\frac{m}{4}}\cdot b
        \end{bmatrix},
    \end{split}
    \end{align}
    and 
    \begin{align}
    \begin{split}
        \mathcal{L}_S: C^k(f,g)\oplus C^{k-\ell+1}(f,g) &\to C^k(f,g)\oplus C^{k-\ell+1}(f,g)\\
        \begin{bmatrix}
            a\\
            b
        \end{bmatrix} &\mapsto \begin{bmatrix}
            a\\
           S^{-1} b
        \end{bmatrix}.
    \end{split}
    \end{align}
    We write $\Phi e^{Tf}\xi_p$ into 
    \begin{align}
   \|\xi_p\|^{-1} \Phi e^{Tf}\xi_p = \sum_{p'\in Z^k(df)} \left(\|\xi_p\|^{-1} e^{Tf(p')} \int_{\overline{\mathcal{U}(p')}} e^{T(f-f(p'))}\xi_p\right)p'.
    \end{align}
    Since the integrals are on unstable manifolds, there must be
    \begin{align}\label{on the unstable manifold less than or equal to 0}
        f|_{\mathcal{U}(p)}-f(p')\leqslant 0, \ \ \forall\ p'\in Z^k(df). 
    \end{align}
    When $p' = p$, by (\ref{gaussian integral}), (\ref{The normal distribution}), (\ref{morse lemma coordinate expression of f}), we see that when $T$ is sufficiently large, 
    \begin{align}
       e^{Tf(p)}\dfrac{9}{10}\left(\dfrac{\pi}{T}\right)^{\frac{k}{2}-\frac{m}{4}}\leqslant  \left|\|\xi_p\|^{-1} e^{Tf(p)} \int_{\overline{\mathcal{U}(p)}} e^{T(f-f(p))}\xi_p\right| \leqslant e^{Tf(p)}\left(\dfrac{\pi}{T}\right)^{\frac{k}{2}-\frac{m}{4}}.
    \end{align}
    When $p'\neq p$, according to \cite[Theorem 2.1]{hutchingslecturenotes}, $p\notin\overline{\mathcal{U}(p')}$. By (\ref{gaussian integral}) and (\ref{on the unstable manifold less than or equal to 0}), we have $C_9>0$ such that 
    \begin{align}
        \left|\|\xi_p\|^{-1}e^{Tf(p')} \int_{\overline{\mathcal{U}(p')}}  e^{T(f-f(p'))}\xi_p\right|\leqslant e^{Tf(p')}\left(\dfrac{\pi}{T}\right)^{\frac{k}{2}-\frac{m}{4}}e^{-C_9T}
    \end{align}
    when $T$ is sufficiently large. 
    
    We now write $\Phi e^{Tf} \alpha$ and $S^{-1}\Phi e^{Tf} \beta$ as
    \begin{align}
        \Phi e^{Tf} \alpha =\ & \sum_{p'\in Z^k(df)}\left(e^{Tf(p')}\int_{\overline{\mathcal{U}(p')}} e^{T(f-f(p'))}\alpha\right)p'\ , \\
        \text{and\ }\ S^{-1}\Phi e^{Tf} \beta =\ & \sum_{q\in Z^{k-\ell+1}(df)}\left(S^{-1}e^{Tf(q)}\int_{\overline{\mathcal{U}(q)}} e^{T(f-f(q))}\beta\right)q.
    \end{align}
    By (\ref{on the unstable manifold less than or equal to 0}) and (\ref{corollary by warner's book}), we have $C_{9}'>0$ and $C_9''>0$ such that for all $1\leqslant i\leqslant r$, 
    \begin{align}
        \left|e^{Tf(p')}\int_{\overline{\mathcal{U}(p')}} e^{T(f-f(p'))}\alpha\right|\leqslant\ & e^{Tf(p')} C_9' e^{-C_9' T},\\
       \text{and\ } \left|S^{-1}e^{Tf(q)}\int_{\overline{\mathcal{U}(q)}} e^{T(f-f(q))}\beta\right|\leqslant\ & S^{-1} e^{Tf(q)} C_9'' e^{-C_9'' T}.
    \end{align}
    
    The above estimates are close to those in \cite[Theorem 6.9]{wittendeformationweipingzhang}. We need $S$ in the estimate of $S^{-1}K(\omega)e^{Tf}\beta$. Let 
    \begin{align}\label{choosing the large C0}
        y_0 = \max_{x\in M} |f(x)|,\ \text{and}\ C_0 = 1 + 2y_0.
    \end{align}
    That $1$ in $C_0$ is to fit the condition of Proposition \ref{refinement of the estimates of proj vs non-proj}.  
    When $S>e^{C_0T}$, and $T$ is sufficiently large, we estimate $S^{-1}K(\omega)e^{Tf}\beta$. 
    
    By repeating the Leibniz rule \cite[Proposition 4.15(iii)]{leeriemannian}
    of the connection $\nabla$, we get 
    \begin{align}
        \nabla(e^{Tf}\beta) =\ & e^{Tf}Tdf\otimes\beta + e^{Tf}\nabla\beta,
    \end{align}
    and 
    \begin{align}
        \nabla^2(e^{Tf}\beta) =\ & e^{Tf}T^2 df\otimes df\otimes \beta + e^{Tf}T(\nabla df)\otimes\beta \nonumber \\
       &  + e^{Tf}Tdf\otimes\nabla\beta + e^{Tf}Tdf\otimes\nabla\beta + e^{Tf}\nabla^2\beta,
    \end{align}
    and continue to $\nabla^n(e^{Tf}\beta)$ for each $n\geqslant 0$. 
    Since the manifold $M$ is compact, then like \cite[Theorem 6.18(i)]{warner2013foundations}, there is a constant $\widetilde{\zeta}_n>0$ such that  
    \begin{align}
        \|\nabla^n(e^{Tf}\beta)\|\leqslant \widetilde{\zeta}_n  e^{Ty_0} T^n\|\beta\|_n. 
    \end{align}
    Thus, we have a constant $\zeta_n>0$ such that 
    \begin{align}
        \|e^{Tf}\beta\|_n \leqslant \zeta_n  e^{Ty_0} T^n\|\beta\|_n.
    \end{align}
    Now, with (\ref{choosing the large C0}), when $S>e^{C_0T}$,  and when $T$ is sufficiently large, we find 
    \begin{align}\label{estimate when with etf and for K}
        \|S^{-1}e^{Tf}\beta\|_n\leqslant \zeta_n  e^{-T(1+y_0)} T^n\|\beta\|_n \leqslant \zeta_n  e^{-\frac{T}{2}-Ty_0} \|\beta\|_n\leqslant \zeta_n  e^{-\frac{T}{2}-Ty_0} \sigma_n e^{-\sigma_n T}.
    \end{align}
    The last inequality is by (\ref{sobolev n norm estimate with respect to n}). 
    According to Lemma \ref{the second to last technical lemma},  and (\ref{estimate when with etf and for K}), if we write $e^{Tf}\beta$ uniquely as
    \begin{align}
        S^{-1}e^{Tf}\beta = \widetilde{\beta}_0 + D^2\widetilde{\beta}_1
    \end{align}
    such that $D^2\widetilde{\beta}_0 = 0$, and $\widetilde{\beta}_1$ is orthogonal to $\widetilde{\beta}_0$ with respect to the $L^2$ norm, then we have constants $C_{10}>0$ and $C'_{10}>0$ such that 
    \begin{align}
        |\widetilde{\beta}_0|\leqslant\ & C_{10} e^{-C_{10}T}e^{-Ty_0}, \label{with parameters the harmoic part estimate}\\
        |d^*\widetilde{\beta}_1|\leqslant\ & C_{10}' e^{-C_{10}'T} e^{-Ty_0}. \label{with parameters the orthogonal green map part estimate}
    \end{align}
    By (\ref{the map K compatible with cup product}), we see that
    \begin{align}
        & S^{-1}K(\omega)e^{Tf}\beta  \nonumber\\
        =\ & (-1)^\ell \sum_{p'\in Z^k(df)}\left(\int_{\overline{\mathcal{U}(p')}} \omega\wedge d^*\widetilde{\beta}_1 - \sum_{q\in Z^{k-\ell}(df)}\int_{\overline{\mathcal{U}(q)}} d^*\widetilde{\beta}_1 \int_{\mathcal{M}(p',q)}\omega\right)p'  \nonumber\\
        & + \delta\left[\sum_{p''\in Z^{k+1}(df)}\left(\int_{\overline{\mathcal{U}(p'')}} \omega\wedge \widetilde{\beta}_0 - \sum_{r\in Z^{k-\ell+1}(df)}\int_{\overline{\mathcal{U}(r)}} \widetilde{\beta}_0 \int_{\mathcal{M}(p'',r)}\omega\right)p''\right]. 
    \end{align}
    The linear map $\delta$ is defined with respect to the inner product 
    \ref{inner product on Ck morse} on a finite dimensional space. According to (\ref{with parameters the harmoic part estimate}) and (\ref{with parameters the orthogonal green map part estimate}), when we write $S^{-1}K(\omega)e^{Tf}$ as the linear combination 
    \begin{align}
        S^{-1}K(\omega)e^{Tf}\beta = \sum_{i = 1}^r t_i\cdot p_i 
    \end{align}
    with $p_1, p_2, \cdots, p_r\in Z^k(df)$, then there are $C_{11}>0$ and $C_{12}>0$ such that 
    \begin{align}\label{estimate small terms finally turn into coefficients}
        |t_i|\leqslant C_{11} e^{-C_{12} T} e^{-Ty_0}. 
    \end{align} 
    for $1\leqslant i\leqslant r$. 
    
    By Remark \ref{remark for the other side that we do not repeat}, we can also apply the above steps to $\begin{bmatrix}
        0\\
        q
    \end{bmatrix}$ with $q\in Z^{k-\ell+1}(df)$. 
    The only tricky part is to estimate 
    $S^{-1}K(\omega) \left(\|\xi_q\|^{-1} e^{Tf}\xi_q\right)$. It follows the same steps as how we obtain (\ref{estimate small terms finally turn into coefficients}). The final expression is 
    \begin{align}
        S^{-1}K(\omega)\left( \|\xi_q\|^{-1} e^{Tf}\xi_q \right)= \sum_{j = 1}^s u_j\cdot q_j,
    \end{align}
    with $q_1, \cdots, q_s\in Z^{k-\ell+1}(df)$ and 
    \begin{align}
        |u_j|\leqslant C_{13} e^{-C_{14} T} e^{-Ty_0} 
    \end{align}
    for $1\leqslant j\leqslant s$. 
    
    Summarizing all estimates, we have  
    \begin{align}\label{separation of the operator}
        \Phi_{ST}^\omega\circ P_{ST}\circ J_T = \mathcal{F}_T\circ \mathcal{N}_T\circ \mathcal{L}_S\circ \mathcal{Y}.
    \end{align}
    Here, the operator $\mathcal{Y}$ is identified with a matrix $Y$ in this way: 
    Let $\mu = |Z(df)|$. We list all the critical points of $f$ as 
      $p_1, p_2, \cdots, p_\mu.$
    Then, we let 
    \begin{align}
        \mathbf{v}_i = \begin{bmatrix}
            p_i\\
            0
        \end{bmatrix},\ \ \mathbf{w}_i = \begin{bmatrix}
            0\\
            p_i
        \end{bmatrix}.
    \end{align}
    The matrix $Y$ is given by 
    \begin{align}
        & \begin{bmatrix}
            \mathcal{Y}\mathbf{v}_1 & \cdots & \mathcal{Y}\mathbf{v}_\mu & \mathcal{Y}\mathbf{w}_1  & \cdots & \mathcal{Y}\mathbf{w}_\mu
        \end{bmatrix} \nonumber\\
        =\ & \begin{bmatrix}
            \mathbf{v}_1 & \cdots & \mathbf{v}_\mu & \mathbf{w}_1  & \cdots & \mathbf{w}_\mu
        \end{bmatrix} Y. 
    \end{align}
    Summarizing all the estimates that we have done, there are constants $C_{15}>0$ and $C_{16}>0$ such that the entries $Y_{ij}$ satisfy:
    \begin{align}
        \dfrac{9}{10}\leqslant Y_{ii}\leqslant 1,\ \ \text{and}\ \ 
        |Y_{ij}| \leqslant C_{15}e^{-C_{16}T}\ \text{for $i\neq j$}. 
    \end{align}
    By \cite[(4.16)]{greub1981linear}, $Y$ is an invertible matrix, and thus $\mathcal{Y}$ is an isomorphism. 
    Since $\mathcal{F}_T$, $\mathcal{N}_T$, and $\mathcal{L}_S$ are already 
    isomorphisms, then $\Phi_{ST}^\omega\circ P_{ST}\circ J_T$ is also an isomorphism when $T$ is sufficiently large and $S>e^{C_0 T}$. 
    
    The lower bound $T_0$ of $T$ is chosen to make $T$ sufficiently large for all inequalities to hold. The maximal value $y_0$ of $|f|$ is put into $C_0$ because we need to ensure that $\mathcal{F}_T$ and $\mathcal{N}_T$ can be correctly placed in (\ref{separation of the operator}). 
\end{proof}
Since $P_{ST}\circ J_T$ is an isomorphism when $S>e^{C_0 T}>e^{C_0T_0}$, by Proposition \ref{final thing to check}, $\Phi_{ST}^\omega$ is an isomorphism. Since $\Phi_{ST}^\omega$ is also a cochain map, we see that it gives a cochain isomorphism, which is more than the quasi-isomorphism. The proof of Theorem \ref{main theorem} is complete. 

\section{Decomposition of cohomology}\label{section of Decomposition of instanton cohomology}
In this section, we prove Corollaries \ref{morse equalities}-\ref{decomposition corollary}. The method for Corollary \ref{morse equalities} is similar to \cite[(3.2.56)]{ma2007holomorphic} and \cite[(5.16)]{wittendeformationweipingzhang}. Then, we prove Corollary \ref{reproducing inequalities here by alternating sums} in a simple way. Finally, the proof of Corollary \ref{decomposition corollary} follows a pattern similar to \cite[(2.12)]{tangtsengclausensymplecticwitten}. 

First, we prove Corollary \ref{morse equalities}. By Hodge theory, the dimension of the $k$-th cohomology group of (\ref{analytic definition}) equals $b^\omega_k$. By Theorem \ref{main theorem}, 
\begin{align}
	\dim F_{ST}^k(f,g,\omega) = \mu_k + \mu_{k-\ell+1},
\end{align}
and then
\begin{align}
	R_k =\ & \dim F_{ST}^k(f,g,\omega) - \dim\ker\left(d_{ST}^\omega: F_{ST}^k(f,g,\omega)\to F_{ST}^{k+1}(f,g,\omega)\right) \nonumber\\
	=\ & \mu_k + \mu_{k-\ell+1} - \dim\ker\left(d_{ST}^\omega: F_{ST}^k(f,g,\omega)\to F_{ST}^{k+1}(f,g,\omega)\right) \nonumber\\
	=\ & \mu_k + \mu_{k-\ell+1} - b_k^\omega - R_{k-1}.
\end{align}
Iterate $R_k$ over $k$: 
\begin{align}
	R_k =\ & \mu_k + \mu_{k-\ell+1} - b_k^\omega - \mu_{k-1} - \mu_{k-\ell} + b_{k-1}^\omega + R_{k-2}\nonumber\\
	    =\ & \cdots
\end{align}
until $k = -1$. Since $R_{-1} = 0$, we immediately find 
\begin{align}
		R_k+\sum_{j = -1}^k (-1)^{k-j} b_j^\omega = \sum_{j = -1}^k (-1)^{k-j} (\mu_j + \mu_{j-\ell+1})
	\end{align}
	for $-1\leqslant k\leqslant m+\ell-1$. Corollary \ref{morse equalities} is proved. 
	
Second, we prove Corollary \ref{reproducing inequalities here by alternating sums}. By Theorem \ref{main theorem}, since 
\begin{align}
	\Phi_{ST}^\omega: F_{ST}^k(f,g,\omega)\to C^k(f,g)\oplus C^{k-\ell+1}(f,g)
\end{align}
is a quasi-isomorphism for $S>e^{C_0T}>e^{C_0T_0}$, we see that $R_k$ equals
{\small\begin{align}
\text{rank}\left(\begin{bmatrix}
 \partial & \cupc(\omega)\\
 0 & (-1)^{\ell-1}\partial	
 \end{bmatrix}
: C^k(f,g)\oplus C^{k-\ell+1}(f,g) \to C^{k+1}(f,g)\oplus C^{k-\ell+2}(f,g)
 \right). 
\end{align}
}

\noindent By finite dimensional linear algebra, we have 
\begin{align}
	R_k \geqslant \text{rank}(\cupc(\omega): C^{k-\ell+1}(f,g)\to C^{k+1}(f,g)) = v_{k-\ell+1}. 
\end{align}
Thus, by Corollary \ref{morse equalities}, 
   \begin{align}\label{proving the ineq with v ranks}
   	\sum_{j = -1}^k (-1)^{k-j} b_j^\omega 
   	 =\ & -R_k + \sum_{j = -1}^k (-1)^{k-j} (\mu_j + \mu_{j-\ell+1})\nonumber\\
   	\leqslant\ & -v_{k-\ell+1} + \sum_{j = -1}^k (-1)^{k-j} (\mu_j + \mu_{j-\ell+1}) \nonumber\\
   	=\ & \sum_{j = -1}^k (-1)^{k-j} (\mu_j - v_{j-\ell} + \mu_{j-\ell+1} - v_{j-\ell+1}).
   \end{align}
   The last line of (\ref{proving the ineq with v ranks}) is by adding and subtracting $v_{j-\ell}$ for $-1\leqslant j\leqslant k$. Corollary \ref{reproducing inequalities here by alternating sums} is proved. 

Third, we prove Corollary \ref{decomposition corollary}. 
We consider the following three cochain complexes $(k = -1, 0, 1, \cdots, m+\ell-1)$: 
\begin{align}
    \partial:\ & C^k(f,g)\to C^{k+1}(f,g),\\
    d^\omega_{ST}:\ & F_{ST}^k(f,g,\omega)\to F_{ST}^{k+1}(f,g,\omega),\\
    (-1)^{\ell-1}\partial:\ & C^{k}(f,g)\to C^{k+1}(f,g).
\end{align}
To simplify notations, their $k$-th cohomology groups are denoted by $H^k(C)$, $H^k(F)$, and $H^k(C)$ respectively. The $(-1)^{\ell-1}$ does not change the cohomology.
For each $k$, we let 
\begin{align}
    \iota:\ & C^k(f,g)\to C^k(f,g)\oplus C^{k-\ell+1}(f,g),\\
    \pi:\ & C^k(f,g)\oplus C^{k-\ell+1}(f,g)\to C^{k-\ell+1}(f,g).
\end{align}
Then, when $S$ and $T$ are both sufficiently large, by the isomorphism $\Phi_{ST}^\omega$, we can directly check the following short exact sequence of cochain complexes: 
\begin{equation}\label{commutative diagram between cochain complexes}
\begin{tikzcd}[row sep=large, column sep = large, every label/.append style={font=\small}]
 & 0  \arrow[d, ""'] & 0 \arrow[d, ""] & \\
\cdots \arrow[r, ""]  & C^k(f,g) \arrow[r, "\partial"] \arrow[d, "(\Phi_{ST}^\omega)^{-1}\circ\iota"'] & C^{k+1}(f,g) \arrow[r, ""] \arrow[d, "(\Phi_{ST}^\omega)^{-1}\circ\iota"] & \cdots \\
\cdots \arrow[r, ""]  & F_{ST}^k(f,g,\omega) \arrow[r, "d^\omega_{ST}"] \arrow[d, "\pi\circ\Phi_{ST}^\omega"'] & F_{ST}^{k+1}(f,g,\omega) \arrow[r, ""] \arrow[d, "\pi\circ\Phi_{ST}^\omega"] & \cdots  \\
\cdots \arrow[r, ""]  & C^{k-\ell+1}(f,g) \arrow[r, "(-1)^{\ell-1}\partial"] \arrow[d, ""'] & C^{k-\ell+2}(f,g) \arrow[r, ""] \arrow[d, ""] & \cdots  \\
  & 0                   & 0                    & 
\end{tikzcd}.
\end{equation}
It induces a long exact sequence:  
\begin{proposition}
    The diagram {\normalfont(\ref{commutative diagram between cochain complexes})} induces a long exact sequence
    \begin{equation}\label{long exact sequence confirm the cup product}
        \begin{tikzcd}[column sep=large]
\cdots \to H^{k-\ell}(C) \arrow[r, "\cupc(\omega)"] & H^k(C) \arrow[r, "(\Phi_{ST}^\omega)^{-1}\circ\iota"] & H^k(F) \arrow[r, "\pi\circ\Phi_{ST}^\omega"] & H^{k-\ell+1}(C) \arrow[r, "\cupc(\omega)"] & H^{k+1}(C) \to \cdots 
\end{tikzcd}
    \end{equation}
    under the condition of Theorem \ref{main theorem}. 
\end{proposition}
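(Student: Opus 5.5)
The plan is to treat (\ref{commutative diagram between cochain complexes}) as a short exact sequence of cochain complexes and apply the standard snake-lemma construction. Afterwards we identify the connecting homomorphism with $\cupc(\omega)$ on cohomology.

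The first step is to transport everything through the cochain isomorphism $\Phi_{ST}^\omega$ of Theorem \ref{main theorem}, valid for $S>e^{C_0T}>e^{C_0T_0}$. Under this isomorphism the middle row becomes the mapping cone Thom-Smale complex $(C^\bullet(f,g)\oplus C^{\bullet-\ell+1}(f,g),\partial^\omega)$. The vertical maps become the inclusion $\iota(a)=(a,0)$ and the projection $\pi(a,b)=b$.

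We then check the required properties directly from the upper triangular form of $\partial^\omega$:
\begin{itemize}
\item $\partial^\omega\iota(a)=(\partial a,0)=\iota\partial a$, so $\iota$ is a cochain map;
\item $\pi\partial^\omega(a,b)=(-1)^{\ell-1}\partial b$, so $\pi$ is a cochain map to the complex with differential $(-1)^{\ell-1}\partial$;
\item $\iota$ is injective, $\pi$ is surjective, and $\ker\pi=\image\iota$ degreewise.
\end{itemize}
Composing with $(\Phi_{ST}^\omega)^{-1}$ and $\Phi_{ST}^\omega$ gives exactly the maps $(\Phi_{ST}^\omega)^{-1}\circ\iota$ and $\pi\circ\Phi_{ST}^\omega$, because $\Phi_{ST}^\omega$ commutes with the differentials. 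Hence (\ref{commutative diagram between cochain complexes}) is a short exact sequence of cochain complexes. The zig-zag lemma then yields a long exact sequence
\[
\cdots\to H^k(C)\to H^k(F)\to H^{k-\ell+1}(C)\xrightarrow{\ \delta\ } H^{k+1}(C)\to\cdots,
\]
with the first two maps induced by $(\Phi_{ST}^\omega)^{-1}\circ\iota$ and $\pi\circ\Phi_{ST}^\omega$.

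The remaining, and main, step is to compute the connecting map $\delta$. Take a class $[b]\in H^{k-\ell+1}(C)$, so $\partial b=0$, and lift it to $(0,b)$; after applying $(\Phi_{ST}^\omega)^{-1}$ this is a lift in $F_{ST}^k$. Apply $\partial^\omega$ to get $(\cupc(\omega)b,0)$, which lies in the image of $\iota$. Pull it back to $\cupc(\omega)b\in C^{k+1}(f,g)$. Thus $\delta[b]=[\cupc(\omega)b]$, which is well defined on cohomology by (\ref{cup product map simplified leibniz rule}).

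Two points need care here: the index shift and signs. First, the map labelled $\cupc(\omega)\colon H^{k-\ell}(C)\to H^k(C)$ in (\ref{long exact sequence confirm the cup product}) is the connecting map one degree lower, so the indices match. Second, the sign conventions of the connecting map are consistent with $\partial^\omega$, since no extra sign appears in its $(1,2)$ entry. Any overall sign convention for $\delta$ would not affect exactness anyway. This identifies all maps in (\ref{long exact sequence confirm the cup product}) and proves the proposition.
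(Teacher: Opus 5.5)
Your proposal is correct and follows essentially the same route as the paper: you transport through the cochain isomorphism $\Phi^\omega_{ST}$, lift a cocycle $a\in C^{k-\ell+1}(f,g)$ to $(\Phi^\omega_{ST})^{-1}(0,a)$, and use $\partial^\omega(0,a)=(\mathcal{C}(\omega)a,0)=\iota(\mathcal{C}(\omega)a)$ to identify the connecting map with $\mathcal{C}(\omega)$. The only difference is that you explicitly verify the short exactness of the sequence of complexes and the degree bookkeeping, which the paper leaves as a direct check.
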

\begin{proof}
    We only need to check that the connecting map is 
    \begin{align}
        \mathcal{C}(\omega): H^{k-\ell+1}(C)\to H^{k+1}(C). 
    \end{align}
    We do diagram chasing \cite[Theorem 2.16]{allenhatcher2002}. First, suppose that $a\in C^{k-\ell+1}(f,g)$ satisfies 
    $\partial a = 0$. Then, we have
    \begin{align}
        \pi\circ\Phi^\omega_{ST}\left((\Phi^\omega_{ST})^{-1}\begin{bmatrix}
            0\\
            a
        \end{bmatrix}\right) = a. 
    \end{align}
    Next, we see that 
    \begin{align}
        d_{ST}^\omega\left((\Phi^\omega_{ST})^{-1}\begin{bmatrix}
            0\\
            a
        \end{bmatrix}\right) = (\Phi^\omega_{ST})^{-1}\partial^\omega\begin{bmatrix}
            0\\
            a
        \end{bmatrix} 
        =  (\Phi^\omega_{ST})^{-1}\begin{bmatrix}
            \cupc(\omega)a\\
            0
        \end{bmatrix}  
        =  (\Phi^\omega_{ST})^{-1}\circ\iota(\cupc(\omega)a).
    \end{align}
    Thus, the diagram chasing leads to $\cupc(\omega)a$. 
\end{proof}

By the long exact sequence (\ref{long exact sequence confirm the cup product}), we see that $H^k(F)$ is isomorphic to
\begin{align}
\begin{split}
\coker\left(\cupc(\omega): H^{k-\ell}(C)\to H^{k}(C)\right)\oplus
    \ker\left(\cupc(\omega): H^{k-\ell+1}(C)\to H^{k+1}(C)\right).
\end{split}
\end{align}
Corollary \ref{decomposition corollary} is proved.

\section*{Acknowledgments}

   I thank Prof. Erkao Bao, Prof. Xiaobo Liu, Prof. Xiang Tang, Prof. Li-Sheng Tseng, Prof. Shanwen Wang, Dr. David Clausen, Dr. Shengzhen Ning, and Dr. Junrong Yan for helpful discussions. Also, I thank Beijing International Center for Mathematical Research for providing the nice working environment. 
\bibliographystyle{abbrv}
\bibliography{mybib.bib}
\end{document}